\providecommand{\E}[1]{{\ensuremath{\mathbb{E}}\mspace{-2mu}\left[#1\right]}}%
\providecommand{\work}[1]{{\ensuremath{\mathrm{Work}}\mspace{-2mu}\left[#1\right]}}%
\providecommand{\rset}{\mathbb{R}}
\providecommand{\nset}{\mathbb{N}}
\providecommand{\Order}[1]{ {\ensuremath{ \mathcal O\left( #1 \right)}} }
\renewcommand{\vec}[1]{{\ensuremath{{\boldsymbol #1}}}}
\providecommand{\valpha}{{\ensuremath{\vec{\alpha}}}}
\providecommand{\vbeta}{{\ensuremath{\vec{\beta}}}}
\renewcommand{\div}{{\ensuremath{ \text{div} }}}
\newcommand{\Leb}{\textnormal{Leb}}
\newcommand{\LebD}{\widetilde{\textnormal{Leb}}}
\newcommand{\maxLebD}{\mathbb{L}}
\newcommand{\qoi}{F}
\newcommand{\descqoi}[2]{\qoi_{#1,#2}}
\newcommand{\bb}{\vec b}
\newcommand{\ee}{\vec e}
\newcommand{\ii}{\vec i}
\newcommand{\jj}{\vec j}
\newcommand{\kk}{\vec k}
\renewcommand{\ll}{\vec l}
\newcommand{\pp}{\vec p}
\newcommand{\qq}{\vec q}
\newcommand{\vv}{\vec v}
\newcommand{\ww}{\vec w}
\newcommand{\xx}{\vec x}
\newcommand{\yy}{\vec y}
\newcommand{\zz}{\vec z}
\newcommand{\oone}{\bm{1}}
\newcommand{\eell}{\boldsymbol{\ell}}
\newcommand{\di}[1]{~\text{d}#1}
\newcommand{\ellrad}{\zeta}	%
\newcommand{\mcE}{\mathcal{E}}
\newcommand{\real}[1]{\mathfrak{Re}\left[ #1\right]\!}
\newcommand{\im}[1]{\mathfrak{Im}\left[ #1 \right]}
\newcommand{\norm}[2]{\left\| #1  \right\|_{#2} }
\newcommand{\pdf}{\varrho} 		%
\renewcommand{\div}{\textnormal{div}}
\newcommand{\finiteset}{\mathcal{G}}
\newcommand{\finitesuppset}{\mathfrak{L}}
\newcommand{\mesh}{\mathbb{T}}
\newcommand{\exmpld}{d}
\newcommand{\summab}{\eta}
\newcommand{\quadpointset}{\mathscr{P}}
\newcommand{\forcing}{\varsigma}
\newcommand{\bigtimes}{\times}
\title{Multi-index Stochastic Collocation convergence rates for random PDEs with parametric regularity}
\author{
  {\footnote{\texttt{(abdullateef.hajiali,
        raul.tempone)@kaust.edu.sa}, \texttt{fabio.nobile@epfl.ch},
      \texttt{tamellini@imati.cnr.it}}}
  Abdul-Lateef~Haji-Ali\footnote{CEMSE, King Abdullah University of
    Science and Technology (KAUST), Thuwal 23955-6900, Saudi Arabia}%
  \,, Fabio~Nobile\footnote{MATHICSE-CSQI, Ecole Polytechnique
    F\'ed\'erale de Lausanne, Station 8, CH 1015, Lausanne,
    Switzerland}%
  \,, Lorenzo Tamellini\footnote{Dipartimento di Matematica
    ``F. Casorati'', Universit\`a di Pavia, Via Ferrata 5, 27100
    Pavia, Italy}\, \footnote{CNR-IMATI, Via Ferrata 1, 27100, Pavia,
    Italy} \,, Ra\'{u}l~Tempone\footnotemark[3]%
}
\date{\today}
\theoremstyle{plain}
\newtheorem{theorem}{Theorem}
\newtheorem{lemma}[theorem]{Lemma}
\newtheorem{corollary}[theorem]{Corollary}
\newtheorem{remark}{Remark}
\newtheorem{example}{Example}
\newtheorem{assumption}{Assumption}
\begin{document}
\maketitle

\pagestyle{myheadings}
\markboth{MISC convergence rates for random PDEs}
{MISC convergence rates for random PDEs}

\begin{abstract}
  We analyze the recent Multi-index Stochastic Collocation (MISC)
  method for computing statistics of the solution of a partial
  differential equation (PDEs) with random data, where the random
  coefficient is parametrized by means of a countable sequence of
  terms in a suitable expansion. MISC is a combination technique based
  on mixed differences of spatial approximations and quadratures over
  the space of random data and, naturally, the error analysis uses the
  joint regularity of the solution with respect to both the variables
  in the physical domain and parametric variables. In MISC, the number
  of problem solutions performed at each discretization level is not
  determined by balancing the spatial and stochastic components of the
  error, but rather by suitably extending the knapsack-problem
  approach employed in the construction of the quasi-optimal
  sparse-grids and Multi-index Monte Carlo methods.  We use a greedy
  optimization procedure to select the most effective mixed
  differences to include in the MISC estimator. We apply our
  theoretical estimates to a linear elliptic PDEs in which the
  log-diffusion coefficient is modeled as a random field,
  with a covariance similar to a Mat\'ern model, whose
  realizations have spatial regularity determined by a scalar
  parameter.
  We conduct a complexity analysis based on a summability argument
  showing algebraic rates of convergence with respect to the overall
  computational work. The rate of convergence depends on the
  smoothness parameter, the physical dimensionality and the efficiency of
  the linear solver. Numerical experiments show the effectiveness of
  MISC in this infinite-dimensional setting compared with the
  Multi-index Monte Carlo method and compare the convergence rate
  against the rates predicted in our theoretical analysis.

\textbf{Keywords}:
 Multilevel,
 Multi-index Stochastic Collocation,
 Infinite dimensional integration,
 Elliptic partial differential equations with random coefficients,
 Finite element method,
 Uncertainty quantification,
 Random partial differential equations,
 Multivariate approximation,
 Sparse grids,
 Stochastic Collocation methods,
 Multilevel methods,
 Combination technique.

 \textbf{AMS class:}
   41A10 (approx by polynomials),
   65C20 (models, numerical methods),
   65N30 (Finite elements)
   65N05 (Finite differences)
\end{abstract}
\section{Introduction}
In this work, we analyze and apply the recent MISC method
  \cite{hajiali.eal:MISC1} to the approximation of quantities of
interest (outputs) from the solutions of linear elliptic partial
differential equations (PDEs) with random coefficients.  Such
equations arise in many applications in which the coefficients of the
PDE are described in terms of random variables/fields due either to a
lack of knowledge of the system or to its inherent non-predictability.
 We focus on the weak approximation of the
solution of the following linear elliptic $\yy$-parametric problem:
\begin{equation}\label{eq:PDE1}
  \begin{cases}
    - \div (a(\xx,\yy)\,\nabla u(\xx,\yy)) = \forcing(\xx) & \mbox{in}\quad \mathscr{B}  \\
    u(\xx,\yy) \,=\, 0 								& \mbox{on}\quad \partial \mathscr{B}.
  \end{cases}
\end{equation}
Here, $\mathscr{B} \subset \rset^d$ with $d \in \nset$ denotes the
  ``physical domain'', and the operators $\div$ and $\nabla$ act with
  respect to the physical variable, $\xx \in \mathscr{B}$, only.  We
  assume that $\mathscr{B}$ has a tensor structure, i.e.,
  $\mathscr{B} = \mathscr{B}_1 \times \mathscr{B}_2 \times \ldots
  \times \mathscr{B}_D$, with $D \in \nset$,
  $\mathscr{B}_i \subset \rset^{d_i}$ and $\sum_{i=1}^D d_i = d$, see,
  e.g., Figures~\ref{subf-Bsq} and~\ref{subf-cyl}.  This
  assumption simplifies the analysis detailed in the following,
  although MISC can be applied to more general domains, such as
  \begin{itemize}
  \item domains obtained by mapping from a reference tensor domain as
    in Figure~\ref{subf-lean-cyl}, by suitably extending the
    approaches in \cite{hughes:IGA,gordon.hall:transfinite.interp};
  \item non-tensor domains that can be immersed in a tensor bounding box,
    $\mathscr{B} \subset \hat{\mathscr{B}} = \hat{\mathscr{B}}_1 \times \hat{\mathscr{B}}_2 \times \ldots  \hat{\mathscr{B}}_D$,
    as in Figure~\ref{subf-nont}, whose mesh is obtained as a tensor product of meshes on each
    component, $\hat{\mathscr{B}}_i$, of the bounding box;
  \item domains that admit a structured mesh,
    i.e., with a regular connectivity, whose level of refinement
    in each ``direction'' can be set independently, as in Figure~\ref{subf-struct};
  \item domains that can be decomposed in patches satisfying any of
    the conditions above (observe that the meshes on each patch need
    not be conforming).
  \end{itemize}
\begin{figure}[tbp]
  \centering
  \subfigure[$\mathscr{B}_1, \mathscr{B}_2$ segments \label{subf-Bsq}]{
    \includegraphics[width=0.24\linewidth]{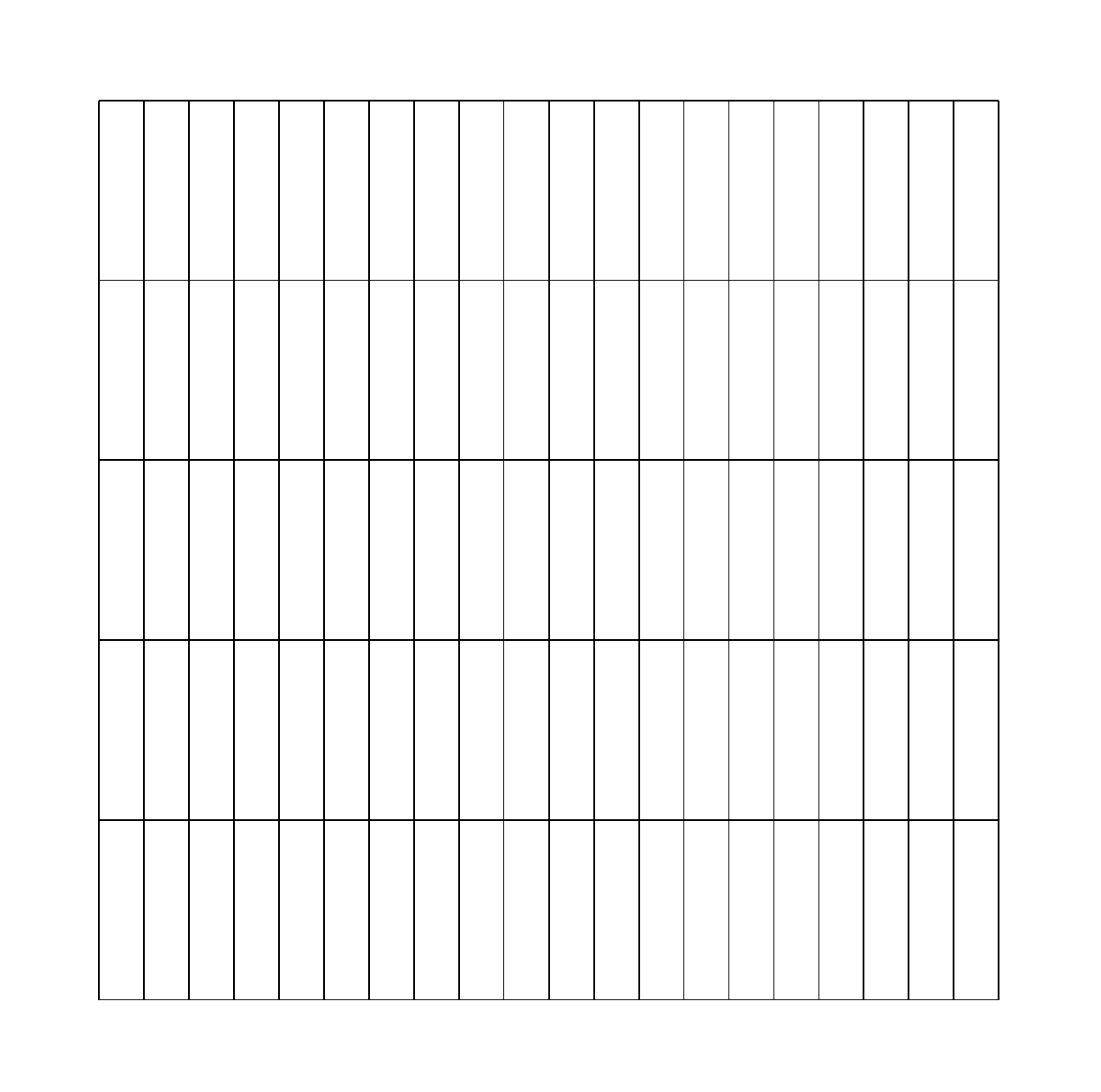}}
  \subfigure[$\mathscr{B}_1$ circle, $\mathscr{B}_2$ segment \label{subf-cyl}]{
    \includegraphics[width=0.24\linewidth]{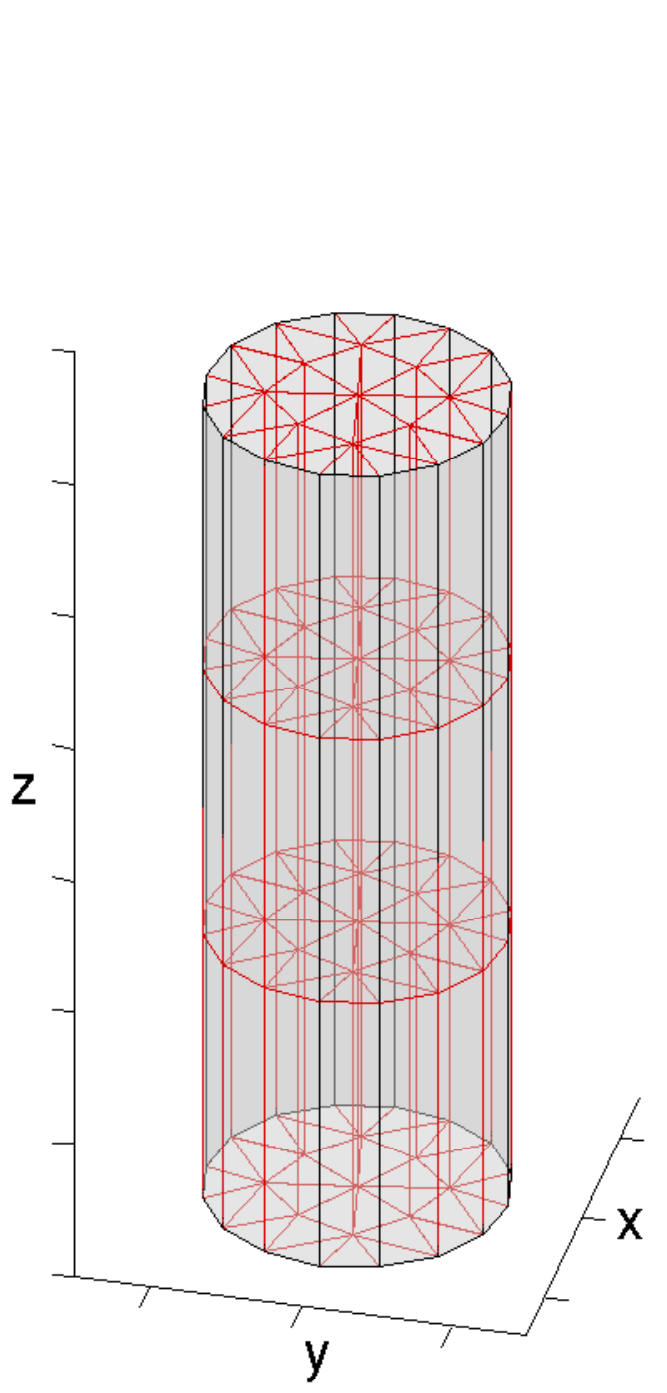}}
  \subfigure[$\mathscr{B}_1$ circle, $\mathscr{B}_2$ segment \label{subf-lean-cyl}]{
    \includegraphics[width=0.24\linewidth]{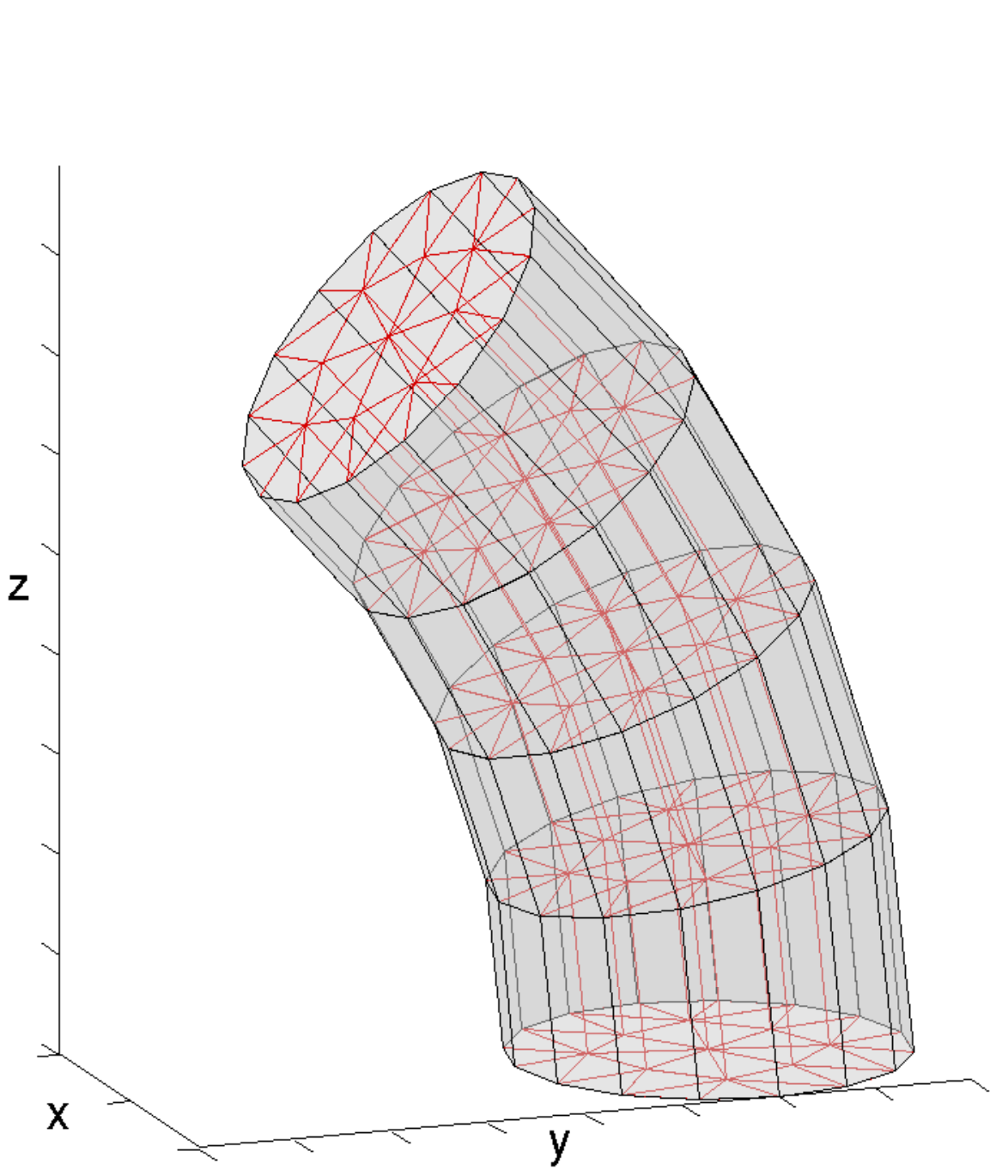}} \\
  \subfigure[non-tensor $\mathscr{B}$ that can be immersed in a tensor bounding box / divided into tensor subdomains \label{subf-nont}]{
    \includegraphics[width=0.24\linewidth]{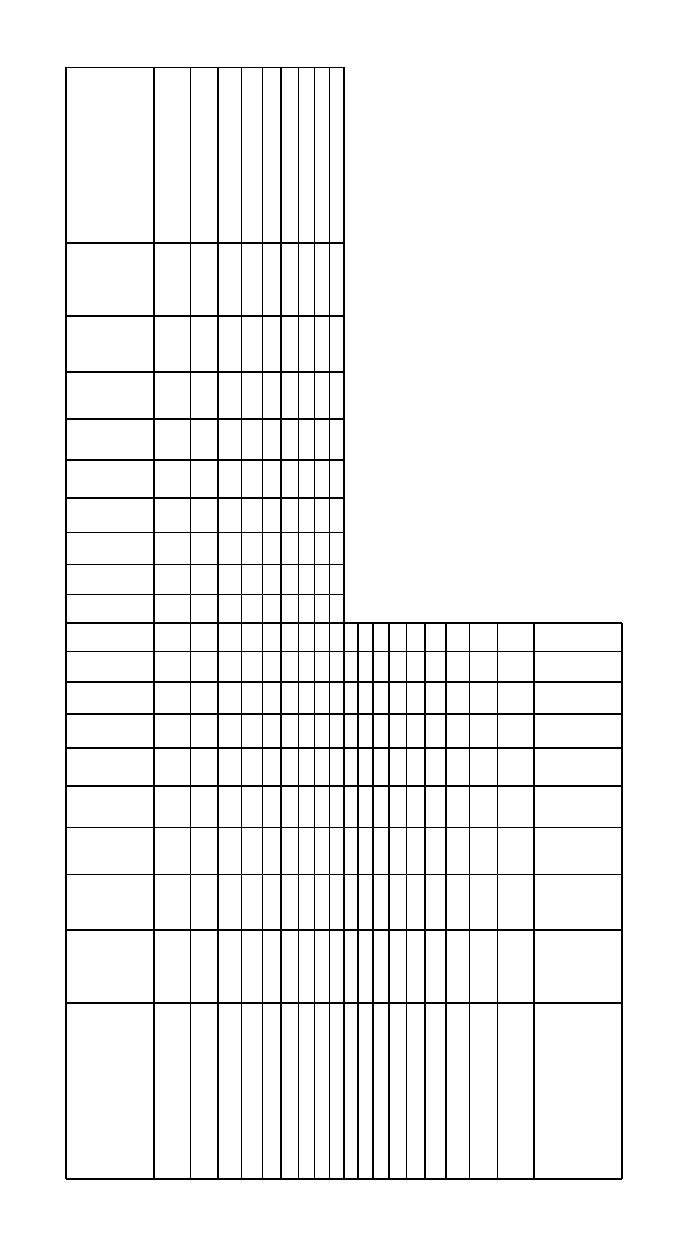}} \quad \quad
  \subfigure[non-tensor $\mathscr{B}$ with a structured mesh \label{subf-struct}]{
    \includegraphics[width=0.24\linewidth]{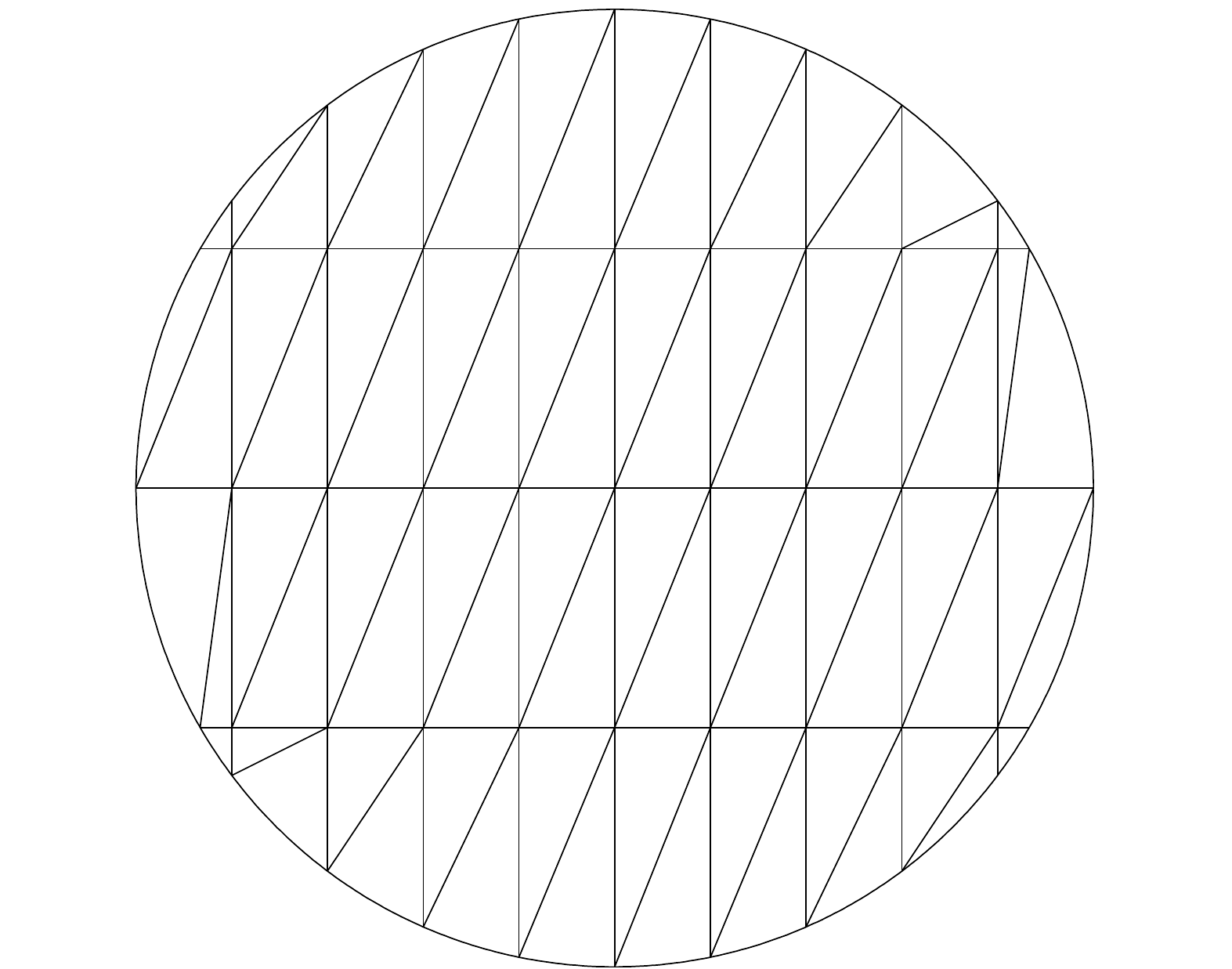}}
  \caption{Examples of physical domains on which MISC can be
      applied: (a) and (b) are within the framework
      of this work, while treating (c) requires the introduction of a mapping from~(b).
      MISC can also be formulated in non-tensor domains as in~(d) and~(e),
      but extending the analysis of the present work to this case is less straightforward and out of the scope
      of this work.}
  \label{fig:domains}
\end{figure}
The parameter $\yy=\{y_j\}_{j\ge 1}$ in \eqref{eq:PDE1} is a random
sequence whose components are independent and uniformly distributed
random variables.  More precisely, each $y_j$ has support in $[-1,1]$
with measure $\frac{\di{\lambda}}{2}$, where $\di{\lambda}$ is the
standard Lebesgue measure.  We further define
$\Gamma = \bigtimes_{ j\ge 1} [-1,1]$ (hereafter referred to as the
``stochastic domain'' or the ``parameter space''), with the
  cylindrical probability measure
$\di{\mu} = \mathop{\bigtimes}_{j \geq 1} \frac{\di{\lambda}}{2}$, (see,
  e.g., \cite[Chapter 3, Section 5]{bogachev2007:measure}).

The right-hand side of \eqref{eq:PDE1}, namely the deterministic
function $\forcing$, does not play a central role in this work and it
is assumed to be a smooth function of class
$C_0^{\infty}(\overline{\mathscr{B}})$, where
  $\overline{\mathscr{B}}$ denotes the closure of $\mathscr{B}$.
This regularity requirement can be relaxed, but we keep it to ease the
presentation, since our main goal in this work is to track the
  effect of the regularity of the coefficient $a$ in \eqref{eq:PDE1}
  on the MISC convergence rate.  Here, we focus on the
following family of diffusion coefficients:
\begin{equation}\label{eq:kappa_and_a}
  a(\xx,\yy) = e^{\kappa(\xx,\yy)}, \text{ with } \kappa(\xx,\yy) = \sum_{j \geq 1} \psi_j(\xx) y_j,
\end{equation}
where $\{\psi_j\}_{j \geq 1}$ is a sequence of functions
$\psi_j \in C^t(\overline{\mathscr{B}})$ for $t\geq 0$ such that
$\norm{\psi_j}{L^\infty(\mathscr{B})} \to 0$ as $j \to
\infty$. Hereafter, without loss of generality, we assume that the
sequence $ \{ \norm{\psi_j}{L^\infty(\mathscr{B})} \}_{j \geq 1}$ is
ordered in decreasing order. Thanks to a straightforward
application of the Lax-Milgram lemma, the well-posedness of
\eqref{eq:PDE1} in the classical Sobolev space,
$V=H^1_0(\mathscr{B})$, is guaranteed almost surely (a.s.) in $\Gamma$
if two functions, $a_{\min}, a_{\max}:\Gamma\to \rset$, exist such
that
\begin{equation}\label{eq:bounding_a}
0< a_{\min}(\yy) \leq a(\xx,\yy) \leq a_{\max}(\yy) <\infty,
\quad \forall \xx \in \mathscr{B}, \,\, \mbox{a.s. in }  \Gamma.
\end{equation}
Moreover, the equation is well posed in the Bochner space,
$L^q(\Gamma;V)$,  for some $q \geq 1$,%
\footnote{Recall that, given $q \geq 1$,
$L^q(\Gamma;V) = \left\{v : \Gamma \to V \mbox{ strongly measurable,
    such that }  \int_\Gamma \norm{u}{V}^q\di{\mu} < \infty \right\}$.}
(see \cite{babuska.nobile.eal:stochastic2,charrier:error_estimates} and
the following discussion), provided that sufficiently high moments
of the functions $1/a_{\min}$ and $a_{\max}$ are bounded.
The goal of our computation is the approximation of an expected value,
\[
\E{\qoi} = \E{\Theta(u)}\in \rset,
\]
where $\Theta$ is a deterministic bounded and linear functional,
and $\qoi(\yy) = \Theta(u(\cdot,\yy))$ is a real-valued random variable,
$F:\Gamma \rightarrow \rset$.  To this end, we utilize the
Multi-index Stochastic Collocation method (MISC), which we have
introduced in a general setting in a previous work
\cite{hajiali.eal:MISC1}.

In MISC, we consider a decomposition in terms of tensorized univariate
details (i.e., a tensorized hierarchical decomposition), for both the
discrete space in which \eqref{eq:PDE1} is solved for a fixed value of
$\yy \in \Gamma$ and for the quadrature operator used to approximate
the expected value of $\qoi$, relying on the well-established theory
of sparse-grid approximation of PDEs on the one hand
\cite{zenger91sparse,b.griebel:acta,Bungartz.Griebel.Roschke.ea:pointwise.conv,%
  Griebel.schneider.zenger:combination,Hegland:combination} and of
sparse-grid quadrature on the other hand
\cite{b.griebel:acta,babuska.nobile.eal:stochastic2,xiu.hesthaven:high,
nobile.tempone.eal:sparse,nobile.tempone.eal:aniso,gana.zabaras:sparse}.
We use tensor products of such univariate details, obtaining combined
deterministic-stochastic, first-order mixed differences to build the
MISC estimator of $\E{\qoi}$ by selecting the most effective mixed
differences with an optimization approach inspired by the literature
on the knapsack problem (see, e.g., \cite{martello:knapsack}). The
knapsack approach also was used in \cite{nobile.eal:optimal-sparse-grids}
to obtain the so-called quasi-optimal sparse grids for PDEs with
stochastic coefficients and in
\cite{b.griebel:acta,griebel.knapek:optimized} in the context of
sparse-grid resolution of high-dimensional PDEs.

The resulting method can be seen as an extension of the sparse-grid
combination technique for PDEs with stochastic coefficients, as well
as a fully sparse, non-randomized version of the Multilevel Monte
Carlo method
\cite{Heinrich:MLMC,giles:MLMC,scheichl.charrier:MLMC,bsz11}.  In
particular, MISC differs from other works in the literature that
attempt to optimally combine spatial and stochastic resolution levels
\cite{teckentrup.etal:MLSC,van-wyk:MLSC,hps13,kss12,bieri:sparse.tensor.coll} %
in two aspects. First, MISC uses combined deterministic-stochastic,
first-order differences, which allows us to exploit not only the
regularity of the solution with respect to the spatial variables and
the stochastic parameters, but also the mixed
deterministic-stochastic regularity whenever available.  Second, the
MISC estimator is built upon an optimization procedure, whereas the
above-mentioned works try to balance the error contributions arising
from the deterministic and stochastic components of the method without
taking into account the corresponding costs.  Finally, MISC can also be
seen as a sparse-grid quadrature version of the Multi-index Monte Carlo
method that was proposed and analyzed in \cite{abdullatif.etal:MultiIndexMC}.

In \cite{hajiali.eal:MISC1}, MISC was introduced in a general setting
and we restricted the analysis to the case of
problems of type \eqref{eq:PDE1} depending on a \emph{finite} number
of random variables, $\yy \in \Gamma \subset \rset^N$, with
  $N < \infty$. Here, we provide a complexity analysis of MISC in the
more challenging case in which the diffusion coefficient $a$
  depends on a countable sequence of random variables,
  $\{y_j\}_{j\geq 1}$. Furthermore, we aim at tracking the dependence
  of the MISC converge rate on the smoothness of the realizations of
  $a$.  This new framework requires that the tools used to prove the
complexity of the method be changed: whereas in
\cite{hajiali.eal:MISC1} we used a ``direct counting'' argument, i.e.,
we derived a complexity estimate by explicitly summing the work and
the error contributions associated with each mixed difference included
in the MISC estimator, here we base our proof on a summability
argument and on suitable interpolation estimates in mixed regularity
spaces. We mention that in \cite{griebel:infdim} an infinite
dimensional analysis based on a direct counting argument was recently
carried out in the case of hyperbolic cross-type index sets that might
arise when quasi-optimizing the work contribution of sparse grid
stochastic collocation without spatial discretization.

\bigskip
The rest of this work is organized as
follows. Section~\ref{s:functional_setting} introduces suitable
  assumptions and a class of random diffusion coefficients that we
consider throughout the work; functional analysis
results that are needed for the subsequent analysis of the MISC
method are also provided. The MISC method is reviewed in Section~\ref{s:MISC-method}.
A complexity analysis of MISC with an infinite number of random variables is
carried out in Section~\ref{s:err}, where we provide a general
convergence theorem. In Section~\ref{s:example_analysis}, we
discuss the application of MISC to the specific class of diffusion
coefficients that we consider here and track the
dependence of the convergence rate on the regularity of the diffusion
coefficient.  Section~\ref{s:numerics} presents some numerical tests
to verify the convergence analysis conducted in the previous
section. Finally, Section~\ref{s:conclusions} provides some conclusions
and final remarks.  In the Appendix, we include some
technical results on the summability and regularity properties of
certain random fields written in terms of their series expansion.

In the following, $\nset$ denotes the set of integer numbers including
zero, while $\nset_+$ denotes the set of positive integer numbers
excluding zero.  We refer to sequences in
$\nset^{\nset_+}$ and $\nset_+^{\nset_+}$ as ``multi-indices''.
Moreover, we often use a vector notation for sequences, i.e., we
formally treat sequences as vectors in $\nset^{\nset_+}$ (or
$\rset^{\nset_+}$) and mark them with bold type.  We employ the following notation, with the understanding that
$N < \infty$ for actual vectors and $N=\infty$ for sequences:
\begin{itemize}
  \item $\oone$ denotes a vector in $\nset^N$ whose components are all equal to one;
  \item $\vec e^N_\ell$ denotes the $\ell$-th canonical vector in
    $\rset^N$, i.e., $\left(\vec e^N_\ell\right)_i = 1$ if $\ell=i$
    and zero otherwise; however, for the sake of clarity, we often omit
    the superscript $N$ whenever it is obvious from context. For
    instance, if $\vv \in \rset^N$, we write $\vv - \ee_1$ instead of
    $\vv - \ee_1^N$;
  \item given $\vv \in \rset^N$, $|\vv| = \sum_{i=1}^N v_i$, $|\vv|_0$ denotes the number
    of non-zero components of $\vv$,
    $\max(\vv) = \max_{i=1,\ldots N}v_i$ and $\min(\vv) = \min_{i=1,\ldots N}v_i$;
  \item $\finitesuppset_+$ denotes the set of sequences with positive components
    with only finitely many elements larger than 1, i.e.,
    $\finitesuppset_+ = \{ \pp \in \nset_+^{\nset_+} : |\pp-\oone|_0 <\infty \}$;
  \item given $\vv \in \rset^N$ and $f:\rset \to \rset$,
    $f(\vv)$ denotes the vector obtained by applying $f$ to each component of $\vv$,
    $f(\vv) = [f(v_1),f(v_2),\cdots,f(v_N)] \in \rset^N$;
  \item given $\vv, \ww \in \rset^N$, the inequality $\vv > \ww$ holds true if and only if $v_i > w_i$ $\forall i=1,\ldots,N$;
  \item given $\vv \in \rset^D$ and $\ww \in \rset^N$, we denote their concatenation
    by $[\vv, \ww] = (v_1,\ldots,v_D,w_1,\ldots, w_N) \in
    \rset^{D+N}$;
  \item given a set with finite cardinality, $\finiteset \subset \nset_+$,
      we define the set $\nset^\finiteset = \{\zz\in\mathbb{\nset}^{\nset_+}: z_j=0 \,,\,\forall j\notin \finiteset\}$.
    We similarly define $\rset^\finiteset$ and $\mathbb{C}^\finiteset$.
\end{itemize}

\section{Functional setting}\label{s:functional_setting}

Even though condition \eqref{eq:bounding_a}  ensures well-posedness of \eqref{eq:PDE1} in $V$,
we need to make sure that realizations of $u$ a.s. belong to
more regular spaces to prove a convergence rate result for MISC. More
specifically, due to the classic spatial sparse-grid approximation
theory, we need certain conditions on the mixed derivatives of
$u$ with respect to the physical coordinates. To
this end, we introduce suitable functional spaces (tensor products of
fractional Sobolev spaces, see also \cite{griebel.harbrecht:tensor})
and then a ``shift'' regularity assumption (see
Assumption~\ref{assump:shift} below), i.e., we assume that the
regularity of the realizations of $u$ is induced ``in a natural
way'' by the regularity of $a$, of the forcing, $\forcing$, and of the
smoothness of the physical domain, $\mathscr{B}$. In other words, we
rule out ``pathological''/``ad hoc'' examples in which $u$ is very
regular despite the data is not, e.g., when the forcing is chosen
such that $u \in C^q$ for $q>2$ even in the presence of a domain with
corners.
First, recall the definition of a fractional Sobolev space for
$l_i \in \rset_+ \setminus \nset_+$ and $\mathscr{B}_i \subset \nset^{d_i}$:
\[
  H^{l_i}(\mathscr{B}_i) = \bigg\{ u \in H^{\lfloor l_i\rfloor}(\mathscr{B}_i) :
  \sup_{\valpha \in \nset^{d_i}, |\valpha| = \lfloor l_i \rfloor}
  \int_{\mathscr{B}_i} \int_{\mathscr{B}_i}
  \frac{|D^{\valpha} u(\xx)- D^{\valpha}u(\xx')|^{2}}{|\xx-\xx'|^{d_i+ 2(l_i -\lfloor l_i \rfloor)}}d\xx d\xx' < \infty \bigg\},
\]
extending the definition of a standard Sobolev space $H^{l_i}(\mathscr{B}_i)$ for $l_i$ integer.
The tensorized fractional Sobolev space can then be defined as
\[
  H^{\ll}(\mathscr{B}) = H^{l_1}(\mathscr{B}_1) \otimes \ldots \otimes
  H^{l_D}(\mathscr{B}_D)
\]
for $\ll = \left(l_i \right)_{i=1}^D \in \rset^{D}_+$.
\footnote{We recall that $H^{\ll}(\mathscr{B})$ is the completion
of formal sums $v=\sum_{k=1}^{K} v_{1,k}v_{2,k}\cdots v_{D,k}$ with
$v_{i,k} \in H^{l_i}(\mathscr{B}_i)$ with respect to the norm induced
by the inner product
\[
(v,w)_{H^{\ll}(\mathscr{B})} = \sum_{k,i}
(v_{1,k},w_{1,i})_{H^{l_1}(\mathscr{B}_1)}(v_{2,k},w_{2,i})_{H^{l_2}(\mathscr{B}_2)}\cdots
(v_{D,k},w_{D,i})_{H^{l_D}(\mathscr{B}_D)}.
\]}
Finally, the mixed fractional Sobolev spaces, that we will need for
our analysis, can be defined for each $\qq \in \rset_+^D$ as
\[
\mathcal{H}^{\oone+\qq}(\mathscr{B}) = \bigcap_{j=1}^D H^{\ee_j +\qq}(\mathscr{B}).
\]
Observe that, while mixed fractional spaces,
$\mathcal{H}^{\oone+\qq}(\mathscr{B})$, are the proper setting for the
forthcoming analysis, we will, for ease of presentation, not look for the most general mixed
space in which the solution lives.
Instead, we will be content with deducing mixed regularity from
inclusions in standard
Sobolev spaces, to the point that Assumption~\ref{assump:shift}
will be written in terms of standard Sobolev spaces.
For this, we observe that $\mathcal{H}^{\oone}(\mathscr{B}) = H^1(\mathscr{B})$ holds and in general we
have the following inclusion result between standard and mixed fractional Sobolev spaces:
\begin{equation}\label{eq:mixed_to_standard_H_inclusions}
  u\in H^{1+r}(\mathscr{B}) \Rightarrow u \in \mathcal{H}^{\oone+r\qq}(\mathscr{B})
  \quad \mbox{for } r \in (0,\infty) \mbox{ and }  0<|\qq| \leq 1.
\end{equation}

Before stating precisely the shift-regularity assumption on $u$,
we need some more notation and setup. First, observe that this
assumption needs to be stated in the complex domain, for reasons that
will be made clear later. We therefore extend the diffusion
coefficient from $a(\cdot,\yy)$ with $\yy \in \Gamma$ to $a(\cdot,\zz)$
with $\zz \in \mathbb{C}^{\nset_+}$, so that the corresponding solution of
\eqref{eq:PDE1}, $u(\cdot,\zz)$, becomes a $H^1_0(\mathscr{B})$
function taking values in $\mathbb{C}$, i.e.,
$u(\cdot,\zz)\in H^1_0(\mathscr{B},\mathbb{C})$.
Since the complex-valued version of problem \eqref{eq:PDE1} is well posed as
long as there exists $\delta$ such that $\real{a(\xx,\zz)} > \delta > 0$ for almost every (a.e.)
$\xx \in \mathscr{B}$,
and since our approximation method will cover the countable set
of parameters $\zz \in \mathbb{C}^{\mathbb{N}_+}$ by multiple subsets of finite cardinality,
we define the following region in
$\mathbb{C}^\finiteset$ for a set of finite cardinality, $\finiteset\subset\nset_+$:
\begin{equation}\label{eq:Sigma_def}
  \Sigma_{\finiteset,\delta} = \{  \zz \in \mathbb{C}^\finiteset : \real{a(\xx,\zz)}\geq \delta>0 \text{ for a.e. } \xx\in\mathscr{B} \}.
\end{equation}
We are now ready to state the assumption on the link between the
  regularity of the coefficient, $a$, and the regularity of solution, $u$.
\begin{assumption}[Shift assumption]\label{assump:shift}
  For a given  $\mathscr{B}$, let $\psi_j \in C^t(\overline{\mathscr B})$ (cf. eq. \eqref{eq:kappa_and_a})
  and $\forcing \in C^\infty_0(\overline{\mathscr{B}})$. We assume
  that there exist $r$ such that $1 < r < t$ and that, for any finite
  set $\finiteset \subset \nset_+$ %
  and any $\zz\in \Sigma_{\finiteset,\delta}$, the three following conditions
  hold:
\begin{enumerate}
\item $u(\cdot,\zz) \in H^{1+r}(\mathscr{B},\mathbb{C}) \cap H^1_0(\mathscr{B},\mathbb{C})$;%
\item $\frac{D u(\cdot,z_j)}{ D z_j} \in  H^{1 + r}(\mathscr{B},\mathbb{C}), \;\; \forall j\in\finiteset$,
  where $\frac{D u(\cdot,z_j)}{ D z_j}$ denotes the partial complex derivative of $u$;
\item $\norm{u(\cdot,\zz)}{H^{1+s}(\mathscr{B},\mathbb{C})} \leq
  C(\delta,s,\forcing,\mathscr{B})\norm{a(\cdot,\zz)}{C^s(\overline{\mathscr{B}},\mathbb{C})}$,
  with $C(\delta,s,\forcing,\mathscr{B})\to \infty$ for $\delta \to 0$, for every $s=1,\ldots,\lfloor r \rfloor$.%
\end{enumerate}
\end{assumption}

In the following, we will need to ensure that
$\norm{u(\cdot, \vec z)}{{H^{1+s}(\mathscr{B,\mathbb{C}})}}$, for $s > 0$, is uniformely
bounded for all $\vec z$ in certain subregions of the complex plane.  Note that this is
a stronger condition than what is stated in the previous assumption,
where we only assumed pointwise control on the norms of $u$ (i.e., we
gave a bound that depends on $\zz$).  In particular, we show at
the end of this section that the possibility of having such a uniform
bound depends on certain summability properties of the diffusion
coefficient.  Toward this end, we state the following assumption,
which also guarantees the well posedness of Problem
\eqref{eq:PDE1}.

\begin{assumption}[Summability of the diffusion coefficient]\label{assump:b_and_p}
For every $s = 0,1,\ldots,s_{\max} \leq r$, define the sequences
$\bb_{s}=\{b_{s,j}\}_{j \geq 1}$ where
\begin{align}
& b_{s,j} \, =\, \max_{\vec{s} \in \nset^d : |\vec s| \leq s}
  \norm{D^{\vec s} \psi_j}{L^\infty(\mathscr{B})} \;,\qquad j \geq 1.
 \label{eq:bbarj_def}
\end{align}
We assume that an increasing sequence $\{p_s\}_{s=0}^{s_{\max}}$
exists such that $0 < p_0 \leq \ldots \leq p_{s_{\max}}<\frac{1}{2}$
and $\bb_{s} \in \ell^{p_s}$, i.e.,
\begin{equation}\label{eq:p_barp_def}
  \norm{\bb_s}{\ell^{p_s}}^{p_s} = \sum_{j \geq 1} b_{s,j}^{p_s} < \infty.
\end{equation}
\end{assumption}
We observe that with the above assumption, $b_{s,j} \to 0^+$ as
$j \to \infty$ and $0\le b_{0,j}\le b_{s,j}$ for every $s=0,1,\ldots,s_{\max}$. Moreover, given
Assumption~\ref{assump:b_and_p}, we have that $\bb_0 \in \ell^1$,
which, together with the fact that $y_j \in [-1,1]$ for $j \geq 1$, guarantees that
condition \eqref{eq:bounding_a} holds and therefore that \eqref{eq:PDE1} is well posed in $V$ a.s. in
$\Gamma$. Incidentally, we observe that the conditions in Assumption~\ref{assump:b_and_p}
are sufficient but not necessary for condition \eqref{eq:bounding_a}
to hold: indeed, one would only need $\vec b_{s} \in \ell^2$ for some integer
$s \geq 1$, see Lemma~\ref{lem:Wa_inf_bound} (and
Corollary~\ref{cor:well_being} for a specific example) in the Appendix.

As suggested above, the fact that, for a fixed $s$, the sequence $\bb_s$
is $p_s$-summable plays a central role in this work. Indeed, if
$\bb_s$ is $p_s$-summable, we show that
$\norm{u(\cdot,\zz)}{H^{1+s}(\mathscr{B},\mathbb{C})}$ is
uniformely bounded with respect to $\zz$ in a region of the
complex plane whose size is proportional to
$\norm{\bb_s}{\ell^{p_s}}^{p_s}$. We use this fact to show convergence
of the MISC method, with the convergence rate dictated by both $p_0$
and $p_s$.  In Theorem~\ref{theo:MISC_conv}, we detail how to
  optimally choose the value of $s$ in the range
  $0,1,\ldots,s_{\max}$, which is the main result of this work.
Restricting the range of values of $s$ by
$p_s < \frac{1}{2}$ is not crucial; we could relax this to
$p_{s} < 1$. However, we follow this more stringent assumption because
it considerably simplifies some technical steps in the following
discussion without affecting the main part of the proof, as we make
clear below (see Remark~\ref{rem:keep_it_simple}).  What is
  important is that $s_{\max}$ might be
  strictly smaller than $\lfloor r \rfloor$ (i.e., it could happen
  that $\bb_r$ is $p_r$-summable but with $p_r > \frac{1}{2}$, or not
  summable at all); in this case, the line of proof we propose does
  not fully exploit the regularity of the solution, $u$.

\begin{example}\label{ex:logunif}
In the numerical section of this work, we consider either $\mathscr{B} = [0,1]$, i.e., $d=D=d_1=1$,
or $\mathscr{B} = [0,1]^3$, i.e., $d=D=3$, $d_i=1$ and $\mathscr{B}_i=[0,1]$ for $i=1,2,3$.
In both cases, we consider the following form for $\kappa(\xx,\yy)$:
\begin{equation}\label{eq:our_matern}
\kappa(\xx,\yy) =  \sum_{\vec k \in \nset^d} A_{\vec k}
\sum_{\vec \ell \in \{0,1\}^d} y_{\vec k,\vec \ell } \, \prod_{i=1}^d
\left(\cos\left({\pi }  k_i  x_i \right)\right)^{\ell_i}
\left(\sin\left({\pi }  k_i  x_i \right)\right)^{1-\ell_i}.
\end{equation}
Observe that it is possible to write $\kappa$ in the form \eqref{eq:PDE1} using a bijective mapping from
$\{y_{\vec k,\vec \ell}\}_{\vec k \in \nset^d, \vec \ell \in\{0,1\}^d}$ to $\{y_j\}_{j \geq 1}$.
We also choose the following values for the $A_{\vec k}$ coefficients:
\begin{equation}\label{eq:ansatzcoeff}
A_{\vec k} {= \left(\sqrt{3}\right)} 2^{\frac{|\vec k|_0}{2}}(1 + |\vec k|^2)^{-\frac{\nu+d/2}{2}},
\end{equation}
for some $\nu>0$. We observe that $\nu$ is a parameter dictating the
$\xx$-regularity of the realizations of $\kappa$, hence of $a$.
Moreover, the parameters $\nu$ and $d$ govern the $p_s$-summability of
the sequence $\bb_s$ for any $s$ and, as a consequence, the
overall convergence of the MISC method, as discussed earlier.
Section~\ref{s:example_analysis} analyzes the summability
properties of the series \eqref{eq:our_matern}.
\end{example}
We conclude this preliminary section by making the shape
of the above-mentioned regions more precise in the complex-plane and showing how
their sizes depend on the summability properties of $a$.
In %
particular, we will exploit the fact that for any finite set
$\finiteset \subset \nset_+$, for every $s=0,1,\ldots,s_{\max}$ and for
any $\zz\in\mathbb{C}^\finiteset$ we have
$\kappa(\cdot,\zz)\in C^{s}(\overline{\mathscr{B}},\mathbb{C})$,
$\norm{\kappa(\cdot,\zz)}{C^{s}(\overline{\mathscr{B}},\mathbb{C})}
\le \sum_{j\in\finiteset}|z_j| b_{s,j}$ and infer, from the
multivariate Fa\`a di Bruno formula (see
Appendix~\ref{sec:app-summability} and
\cite{constantine.savits:multivariate}), that
$a(\cdot,\zz)\in C^{s}(\overline{\mathscr{B}},\mathbb{C})$ as well,
with the estimate
\begin{equation}\label{eq:C_s_norm_of_a}
  \norm{a(\cdot,\zz)}{C^s(\overline{\mathscr{B}},\mathbb{C})} \leq
  \frac{s!}{(\log 2)^s}\norm{a(\cdot,\zz)}{C^0(\overline{\mathscr{B}},\mathbb{C})}(1+\norm{\kappa(\cdot,\zz)}{C^s(\overline{\mathscr{B}})})^s,
  \quad \forall \zz \in \mathbb{C}^\finiteset.
\end{equation}
Next, for a given  $\ellrad>1$, let $\mcE_{\ellrad}$ denote the polyellipse in the complex plane
\begin{align*}%
  \mcE_{\ellrad} = \bigg\{ z \in \mathbb{C} : \,\,
  \real{z} \leq \frac{\ellrad + \ellrad^{-1}}{2}\cos{\vartheta}, \,\,\,
  \im{z} \leq \frac{\ellrad - \ellrad^{-1}}{2}\sin{\vartheta},
  \,\, \vartheta \in [0, 2\pi) \bigg\}.
\end{align*}
For any sequence $\vec{\ellrad} = \{\ellrad_{j}\}_{j \geq 1}$ with $\ellrad_{j}>1$ for every $j \geq 1$
and for any finite set, $\finiteset \subset \nset_+$, %
we introduce the Bernstein polyellipse:
\begin{equation}\label{eq:bernstein_polyellipse}
\mcE_{\vec{\ellrad}}^\finiteset = \{ \zz \in \mathbb{C}^{\finiteset} :
z_j \in \mcE_{\ellrad_j} \mbox{ for all } j \in \finiteset\}.
\end{equation}
\begin{lemma}[Holomorphic complex continuation of $u$ in $H^1_0(\mathscr{B};\mathbb{C})$
  in a Bernstein polyellipse]\label{lemma:polyell_analyticity}
  Consider the sequence $\bb_0$ defined in
  \eqref{eq:bbarj_def}. For any $\delta>0$, let $E_\delta>2$ be such that
  \[
  \frac{\pi}{E_\delta} = -\norm{\bb_0}{\ell^1} -\log \delta + \log \cos \left( \frac{\pi}{E_\delta} \right),
  \]
  and consider the sequence $\vec{\ellrad}_0 = \{\ellrad_{0,j}\}_{j \geq 1}$, with %
  \begin{align}
    \ellrad_{0,j} & = \tau_{0,j} + \sqrt{\tau_{0,j}^2+1} > 1 \label{eq:ellrad_dep_on_b} \\
    \tau_{0,j}    & = \frac{\pi}{E_\delta} \frac{(b_{0,j})^{p_0-1}}{\norm{\bb_0}{\ell^{p_0}}^{p_0}}\,\,, \label{eq:taun_def_H1_norm}
  \end{align}
  with $p_0$ as in \eqref{eq:p_barp_def}.
  Then, for any finite set $\finiteset \subset \nset_+$,
  the solution, $u$, admits a holomorphic complex continuation,
  $u:\mathbb{C}^\finiteset \rightarrow H^{1}_0(\mathscr{B},\mathbb{C})$,
  in the Bernstein polyellipse, $\mcE_{\vec{\ellrad}_0}^\finiteset
  \subset \Sigma_{\finiteset,\delta}$,
  with \[\sup_{\zz \in \mcE_{\vec{\ellrad}_0}^\finiteset}
  \norm{u(\cdot,\zz)}{H^{1}(\mathscr{B})} \leq C_{0,u} = \frac{\norm{\forcing}{H^{-1}(\mathscr{B})}}{\delta} <\infty,\]
  with $C_{0,u}$ independent of $\finiteset$.
\end{lemma}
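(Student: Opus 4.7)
The plan is to first verify that the prescribed polyellipse sits inside the well-posedness region, $\mcE_{\vec{\ellrad}_0}^\finiteset\subset\Sigma_{\finiteset,\delta}$, and then apply a complex Lax--Milgram argument on $\Sigma_{\finiteset,\delta}$ to obtain simultaneously the uniform $H^1$ bound and the holomorphy of the solution map.

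For the inclusion I would exploit that, with $\ellrad_{0,j}=\tau_{0,j}+\sqrt{\tau_{0,j}^2+1}$ as in \eqref{eq:ellrad_dep_on_b}, elementary algebra gives $\ellrad_{0,j}-\ellrad_{0,j}^{-1}=2\tau_{0,j}$ and $\ellrad_{0,j}+\ellrad_{0,j}^{-1}=2\sqrt{\tau_{0,j}^2+1}\leq 2(1+\tau_{0,j})$ (the latter because $\tau_{0,j}\geq 0$), so that every $z_j\in\mcE_{\ellrad_{0,j}}$ satisfies $|\real{z_j}|\leq 1+\tau_{0,j}$ and $|\im{z_j}|\leq\tau_{0,j}$. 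Since $|\psi_j(\xx)|\leq b_{0,j}$ and, by \eqref{eq:taun_def_H1_norm},
\[
\sum_{j\in\finiteset}b_{0,j}\tau_{0,j}=\frac{\pi}{E_\delta}\frac{\sum_{j\in\finiteset}b_{0,j}^{p_0}}{\norm{\bb_0}{\ell^{p_0}}^{p_0}}\leq\frac{\pi}{E_\delta},
\]
every $\zz\in\mcE_{\vec{\ellrad}_0}^\finiteset$ satisfies $|\im{\kappa(\xx,\zz)}|\leq\pi/E_\delta<\pi/2$ (using $E_\delta>2$) and $\real{\kappa(\xx,\zz)}\geq -\norm{\bb_0}{\ell^1}-\pi/E_\delta$. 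Writing $a=e^{\kappa}$ and combining,
\[
\real{a(\xx,\zz)}=e^{\real{\kappa(\xx,\zz)}}\cos(\im{\kappa(\xx,\zz)})\geq e^{-\norm{\bb_0}{\ell^1}-\pi/E_\delta}\cos(\pi/E_\delta),
\]
and the right-hand side equals $\delta$ by the implicit definition of $E_\delta$, which proves the inclusion.

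On $\Sigma_{\finiteset,\delta}$ the weak formulation of the complex-valued problem reads $\int_\mathscr{B}a(\cdot,\zz)\nabla u\cdot\overline{\nabla v}\,\di\xx=\int_\mathscr{B}\forcing\,\overline v\,\di\xx$; testing with $v=u(\cdot,\zz)$ and taking real parts yields $\delta\norm{\nabla u(\cdot,\zz)}{L^2(\mathscr{B})}^2\leq \norm{\forcing}{H^{-1}(\mathscr{B})}\norm{u(\cdot,\zz)}{H^1(\mathscr{B})}$, whence $\norm{u(\cdot,\zz)}{H^1(\mathscr{B})}\leq \norm{\forcing}{H^{-1}(\mathscr{B})}/\delta=:C_{0,u}$, uniformly in $\finiteset$ and $\zz$. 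Holomorphy of $\zz\mapsto u(\cdot,\zz)$ into $H^1_0(\mathscr{B},\mathbb{C})$ then follows in the standard way: since $\finiteset$ is finite, $a(\xx,\zz)=\exp\sum_{j\in\finiteset}\psi_j(\xx)z_j$ is entire in $\zz$ for a.e.\ $\xx$, so the associated operator $A(\zz):H^1_0\to H^{-1}$ is holomorphic and, by the coercivity just established, uniformly boundedly invertible on $\mcE_{\vec{\ellrad}_0}^\finiteset$; hence so is $\zz\mapsto u(\cdot,\zz)=A(\zz)^{-1}\forcing$.

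The main obstacle is the bookkeeping in the first step: identifying simultaneous worst-case bounds on $\real{\kappa}$ and $\im{\kappa}$ over the polyellipse, and recognizing that the implicit definition of $E_\delta$ is tuned precisely so that the combined bound $e^{-\norm{\bb_0}{\ell^1}-\pi/E_\delta}\cos(\pi/E_\delta)$ equals $\delta$, rather than merely an upper bound. The remaining pieces --- complex Lax--Milgram and the holomorphic-operator-family argument --- are routine.
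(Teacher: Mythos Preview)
Your proof is correct and follows essentially the same approach as the paper: both establish the inclusion $\mcE_{\vec{\ellrad}_0}^\finiteset\subset\Sigma_{\finiteset,\delta}$ via the same bounds on $|\real{z_j}|$ and $|\im{z_j}|$ over the polyellipse, the same summability identity $\sum_j b_{0,j}\tau_{0,j}\le\pi/E_\delta$, and the same recognition that the defining equation for $E_\delta$ makes the lower bound on $\real{a}$ equal to $\delta$, after which the uniform $H^1$ bound and holomorphy are standard. The only cosmetic difference is that the paper argues ``forward'' (constructing a rectangular region inside $\Sigma_{\finiteset,\delta}$ and then identifying the largest inscribed polyellipse), whereas you argue ``backward'' (starting from the given polyellipse and verifying the inclusion), which is arguably cleaner given that the lemma already prescribes $\vec{\ellrad}_0$.
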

\begin{proof}
It is well known in the literature that $u:\mathbb{C}^\finiteset \rightarrow H^{1}_0(\mathscr{B},\mathbb{C})$
is holomorphic in the region $\Sigma_{\finiteset,\delta}$ defined in \eqref{eq:Sigma_def}
(see, e.g., \cite{babuska.nobile.eal:stochastic2}).
To compute the parameters $\{ \ellrad_j\}_{j \in \finiteset}$ of %
a Bernstein polyellipse contained in $\Sigma_{\finiteset,\delta}$, we rewrite $a(\xx,\zz)$ as
\begin{align*}
a(\xx,\zz)
& = \exp \left( \sum_{j \in \finiteset} z_j \psi_j(\xx) \right)
  = \exp \left( \sum_{j \in \finiteset} \real{z_j} \psi_j(\xx) \right) \exp \left( \sum_{j \in \finiteset} i \im{z_j}  \psi_j(\xx) \right) \\
& = \exp \left( \sum_{j \in \finiteset} \real{z_j}  \psi_j(\xx) \right)
	\left[ \cos \left(\sum_{j \in \finiteset} \im{z_j}  \psi_j(\xx) \right) + i \sin \left( \sum_{j \in \finiteset} \im{z_j}  \psi_j(\xx) \right) \right],
\end{align*}
so that $\Sigma_{\finiteset,\delta}$ can be rewritten as
\[
\Sigma_{\finiteset,\delta} = \left\{\zz \in \mathbb{C}^\finiteset :
\exp \left( \sum_{j \in \finiteset} \real{z_j}  \psi_j(\xx) \right) \cos \left(\sum_{j \in \finiteset} \im{z_j}  \psi_j(\xx) \right) \geq \delta \,\,
\mbox{ for a.e. } \xx \in \mathscr{B}  \right\}.
\]
Now, for some $E>2$ that we choose in the following, the two following conditions on $\zz$ imply that $\zz \in \Sigma_{\finiteset,\delta}$:
\[
  \begin{cases}
    \cos \Big( \sum_{j \in \finiteset} |\im{z_j}| \,b_{0,j} \Big) \geq \displaystyle \cos\left( \frac{\pi}{E} \right)\\[10pt]
    \exp \Big( -\sum_{j \in \finiteset} |\real{z_j}| \,b_{0,j} \Big) \geq \displaystyle  \frac{\delta}{\cos \left( \frac{\pi}{E} \right)};
  \end{cases}
\]
equivalently, we write
\[
  \begin{cases}
    \sum_{j \in \finiteset} |\im{z_j}\!| \,b_{0,j} \leq  \displaystyle \frac{\pi}{E}  \\[8pt]
    \sum_{j \in \finiteset} |\real{z_j}| \,b_{0,j} \leq \displaystyle  -\log \delta + \log \cos \left( \frac{\pi}{E} \right).
  \end{cases}
\]
For a fixed value of
$E$, the equations above define a second region,
$\Omega_{\finiteset,\delta}$, included in
$\Sigma_{\finiteset,\delta}$.  In turn, the previous conditions are
verified if the following conditions, which define a hyper-rectangular
region, $\mathcal{R}_\delta \subset
  \Omega_{\finiteset,\delta}$, are verified:
\[
\begin{cases}
  \displaystyle |\im{z_j}\!| \leq \tau_{0,j} = \frac{\pi (b_{0,j})^{p_0-1}}{E \norm{\bb_0}{\ell^{p_0}}^{p_0}}, \\%[6pt]
  \displaystyle |\real{z_j}| \leq 1+ w_{0,j},
  \quad \mbox{ with }
  w_{0,j} = \frac{(b_{0,j})^{p_0-1}}{ \norm{\bb_0}{\ell^{p_0}}^{p_0}} \left( -\norm{\bb_0}{\ell^1} -\log \delta + \log \cos \left( \frac{\pi}{E} \right) \right),
\end{cases}
\]
provided that $\delta$ and $E$ are such that the quantity
$-\norm{\bb_0}{\ell^1} -\log \delta + \log \cos \left( \frac{\pi}{E} \right)$ remains positive.
Observe that for sufficiently small $\delta>0$ such $E$ exists, since $f(E) = \log \cos \left( \frac{\pi}{E} \right)$ is
a monotonically increasing function, with $f(E) \to -\infty$ for $E \to 2$ and $f(E) \to 0$ for $E \to \infty$,
and $-\log \delta$ is positive for sufficiently small $\delta$. %
In particular, for any $\delta>0$, we choose $E=E_\delta$ such that $w_{0,j} = \tau_{0,j}$, which leads to
\[
\frac{\pi}{E_\delta} = -\norm{\bb_0}{\ell^1} -\log \delta + \log \cos \left( \frac{\pi}{E_\delta} \right).
\]
We observe that with this choice, $\tau_{0,j}$ (and hence $w_{0,j}$)
actually does not depend on $\finiteset$, and that we can define the sequence
$\vec{\tau}_0 = \{\tau_{0,j}\}_{j \geq 1}$.

We are now in the position to compute the Bernstein polyellipses that touch the boundary of $\mathcal{R}_\delta$ on the
real and imaginary axes. For the real axis, we have to enforce %
\[
\frac{\ellrad_{j,\text{real}}+\ellrad_{j,\text{real}}^{-1}}{2} = 1+\tau_{0,j} \Rightarrow \ellrad_{j,\text{real}} = 1 + \tau_{0,j} + \sqrt{(1+\tau_{0,j})^2-1},
\]
while for the imaginary axis we have to enforce
\[
\frac{\ellrad_{j,\text{imag}}-\ellrad_{j,\text{imag}}^{-1}}{2} = \tau_{0,j} \Rightarrow  \ellrad_{j,\text{imag}} = \tau_{0,j} + \sqrt{\tau_{0,j}^2+1}\,.
\]
The proof is concluded by observing that $\ellrad_{j,\text{imag}} \leq \ellrad_{j,\text{real}}$, i.e.,
the only polyellipse entirely contained in $\mathcal{R}_\delta$, and hence in $\Sigma_{\finiteset,\delta}$,
is the one touching $\mathcal{R}_\delta$ on the imaginary axis, which also implies that
the bound
$\sup_{\zz \in \mcE_{\vec{\ellrad}}^\finiteset}
  \norm{u(\cdot,\zz)}{H^{1}(\mathscr{B})} \leq C_{0,u} = \frac{\norm{\forcing}{H^{-1}(\mathscr{B})}}{\delta} <\infty$
holds independently of $\finiteset$.
\end{proof}

\begin{lemma}[Holomorphic complex continuation of $u$ in $H^{1+s}(\mathscr{B};\mathbb{C})$
  in a Bernstein polyellipse]\label{lemma:polyell_analyticity_high-order}
  For a given $s=1,2,\ldots,s_{\max}$,
  let $\vec{\ellrad}_s = \{\ellrad_{s,j}\}_{j \geq 1}$, with
  \begin{align}
    \ellrad_{s,j} & = \tau_{s,j} + \sqrt{\tau_{s,j}^2+1} > 1,  \label{eq:ellrad_dep_on_b_s} \\
    \tau_{s,j}    & = \frac{\pi (b_{s,j})^{p_s-1}}{E_\delta \norm{\bb_s}{\ell^{p_s}}^{p_s}},
    \label{eq:taun_def_H1_norm_s}
  \end{align}
  with $\bb_s$ as in \eqref{eq:bbarj_def}, $p_s$ as in \eqref{eq:p_barp_def},
  and $E_\delta$ as in Lemma~\ref{lemma:polyell_analyticity}.
  For any finite set  $\finiteset \subset \nset_+$, %
  $u:\mathbb{C}^\finiteset \rightarrow H^{1+s}(\mathscr{B},\mathbb{C})$ is holomorphic
  in the Bernstein polyellipse $\mcE_{\vec{\ellrad}_s}^\finiteset\subset \Sigma_{\finiteset,\delta}$,

  \begin{equation}\label{eq:supz-u-H-1-s-bound}
  \sup_{\zz \in \mcE_{\vec{\ellrad}_s}} \norm{u(\cdot,\zz)}{H^{1+s}(\mathscr{B})}
  \leq C_{s,u} = C(\tilde{\delta},s, \forcing, \mathscr{B}) M
  < \infty
\end{equation}
  with $M = \frac{s!}{(\log 2)^s} e^{K  \frac{\pi}{E_\delta}} \left( 1 + K \frac{\pi}{E_\delta} \right)^s$,
  $K= \left(2 + \frac{1}{\min_{j \geq 1} \tau_{s,j}^2}\right)^{1/2}$,
  $\tilde{\delta}= e^{- K \frac{\pi}{E_\delta}}$, $C(\tilde{\delta},s, \forcing, \mathscr{B})$ as in Assumption~\ref{assump:shift},
  and $C_{s,u}$ independent of $\finiteset$.
\end{lemma}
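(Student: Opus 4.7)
The plan is to mirror the argument of Lemma~\ref{lemma:polyell_analyticity} but in the $C^s$-topology for $\kappa$, and then convert the resulting control of $a$ into a bound on $\norm{u(\cdot,\zz)}{H^{1+s}(\mathscr{B},\mathbb{C})}$ via the Fa\`a di Bruno estimate~\eqref{eq:C_s_norm_of_a} and the shift estimate of Assumption~\ref{assump:shift}.3. For $\zz\in\mcE_{\vec{\ellrad}_s}^\finiteset$, the choice~\eqref{eq:ellrad_dep_on_b_s} gives $\tfrac12(\ellrad_{s,j}-\ellrad_{s,j}^{-1})=\tau_{s,j}$ and $\tfrac12(\ellrad_{s,j}+\ellrad_{s,j}^{-1})=\sqrt{\tau_{s,j}^2+1}$, so $|\im{z_j}|\le\tau_{s,j}$, $|\real{z_j}|\le\sqrt{\tau_{s,j}^2+1}$, and $|z_j|\le K\tau_{s,j}$ with $K=(2+1/\min_{j}\tau_{s,j}^2)^{1/2}$ exactly as in the statement.

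Using the series definition of $\kappa$ together with $\norm{D^{\vec s}\psi_j}{L^\infty(\mathscr{B})}\le b_{s,j}$, I would then bound
\[
\norm{\kappa(\cdot,\zz)}{C^s(\overline{\mathscr{B}},\mathbb{C})}\le \sum_{j\in\finiteset}|z_j|\,b_{s,j}\le K\sum_{j\in\finiteset}\tau_{s,j}\,b_{s,j}=\frac{K\pi}{E_\delta},
\]
where the equality is a direct consequence of inserting~\eqref{eq:taun_def_H1_norm_s} and invoking the $p_s$-summability of $\bb_s$ from Assumption~\ref{assump:b_and_p}. The same computation, combined with $b_{0,j}\le b_{s,j}$, yields $|\real{\kappa}|,|\im{\kappa}|\le K\pi/E_\delta$, and hence $|a(\xx,\zz)|=e^{\real{\kappa(\xx,\zz)}}\ge e^{-K\pi/E_\delta}=\tilde\delta$, which (for $\delta$ small enough that $K\pi/E_\delta$ is well inside $(0,\pi/2)$) also forces $\real{a(\xx,\zz)}=e^{\real{\kappa}}\cos(\im{\kappa})>0$ uniformly, so $\mcE_{\vec{\ellrad}_s}^\finiteset\subset \Sigma_{\finiteset,\tilde\delta}$. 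Substituting the $C^s$-bound on $\kappa$ and $\norm{a(\cdot,\zz)}{C^0}\le e^{K\pi/E_\delta}$ into~\eqref{eq:C_s_norm_of_a} produces $\norm{a(\cdot,\zz)}{C^s(\overline{\mathscr{B}},\mathbb{C})}\le M$, and applying Assumption~\ref{assump:shift}.3 at $\tilde\delta$ delivers~\eqref{eq:supz-u-H-1-s-bound} with $C_{s,u}=C(\tilde\delta,s,\forcing,\mathscr{B})M$. Holomorphy of $\zz\mapsto u(\cdot,\zz)$ as an $H^{1+s}$-valued map then follows by combining the $H^1_0$-valued holomorphy already established in Lemma~\ref{lemma:polyell_analyticity} with Assumption~\ref{assump:shift}.2, which places each complex partial derivative in $H^{1+r}\subset H^{1+s}$; the uniform $H^{1+s}$-bound just obtained upgrades weak to strong holomorphy.

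The step I expect to require most care is the $\Sigma$-containment: $\Sigma_{\finiteset,\tilde\delta}$ is defined via $\real{a}$ rather than $|a|$, while the polyellipse calculation naturally controls only $|a|=e^{\real{\kappa}}$. Obtaining a genuine lower bound on $\real{a}=e^{\real{\kappa}}\cos(\im{\kappa})$ forces one to also track $\cos(\im{\kappa})$, which only remains bounded away from zero when $K\pi/E_\delta$ is small; this is the exact analogue of the careful choice of $E_\delta$ made in Lemma~\ref{lemma:polyell_analyticity} and restricts the applicable regime to $\delta$ small enough. Two secondary routine checks are that $K$ is finite, i.e., $\min_{j}\tau_{s,j}>0$, which holds because $b_{s,j}\to 0$ and $p_s<1$ make $\tau_{s,j}$ a bounded-below sequence, and that the $p_s$-summability of $\bb_s$ from Assumption~\ref{assump:b_and_p} is exactly what is needed to collapse the infinite sum $\sum_j \tau_{s,j}b_{s,j}$ to $\pi/E_\delta$ after the normalization built into~\eqref{eq:taun_def_H1_norm_s}.
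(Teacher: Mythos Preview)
Your proposal is correct and follows essentially the same route as the paper: bound each $z_j$ in the polyellipse by $K\tau_{s,j}$ via the rectangle $|\real{z_j}|\le\sqrt{1+\tau_{s,j}^2}$, $|\im{z_j}|\le\tau_{s,j}$, use the $p_s$-normalization of $\tau_{s,j}$ to collapse $\sum_j b_{s,j}|z_j|$ to $K\pi/E_\delta$, feed this into the Fa\`a di Bruno estimate~\eqref{eq:C_s_norm_of_a}, and conclude via Assumption~\ref{assump:shift}.3. Two minor remarks: the displayed equality $K\sum_{j\in\finiteset}\tau_{s,j}b_{s,j}=K\pi/E_\delta$ should be an inequality (the sum is only over $\finiteset$, not all $j\ge 1$), and you are in fact more careful than the paper about the $\Sigma$-containment, correctly noting that a lower bound on $\real{a}=e^{\real{\kappa}}\cos(\im{\kappa})$ needs control of the cosine factor, whereas the paper simply asserts $\real{a}\ge e^{-K\pi/E_\delta}$.
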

\begin{proof}
From Assumption~\ref{assump:shift}, $u : \mathbb{C}^\finiteset \to H^{1+s}(\mathscr{B},\mathbb{C})$
is complex differentiable for every $\zz$ in $\Sigma_{\finiteset,\varepsilon}$ for any $\varepsilon > 0$.
It is therefore holomorphic in $\Sigma_{\finiteset,\varepsilon}$.
Similarly to the previous lemma, we look for a region in which we have
an a-priori bound on the $H^{1+s}(\mathscr{B},\mathbb{C})$ norm of $u$ uniformly on $\zz$.
Again from Assumption~\ref{assump:shift}, we have that this is true in the region
\[
\Xi_{\finiteset,\varepsilon}(M) = \{ \zz \in \mathbb{C}^\finiteset : \norm{a(\cdot,\zz)}{C^s(\overline{\mathscr{B}})} \leq M \}
\cap \Sigma_{\finiteset,\varepsilon} \quad \mbox{ for any } \varepsilon>0.
\]
However, contrary to the previous lemma, in this proof, we do not
derive the expression of a polyellipse contained in
$\Xi_{\finiteset,\varepsilon}(M)$, but content ourselves with verifying
that the polyellipses,
$\mathcal{E}_{\finiteset,\vec{\ellrad}_s}$, proposed in the statement
of the lemma (that we have obtained simply by replacing
$b_{0,j}$ with
$b_{s,j}$ in \eqref{eq:taun_def_H1_norm}) satisfy the requirement,
i.e., $\mathcal{E}_{\finiteset,\vec{\ellrad}_s} \subset
\Xi_{\finiteset,\tilde{\delta}}(M)$, for every finite set, $\finiteset
\subset \nset_+$, and for a certain
$\tilde{\delta}$ that we specify later to control the coercivity of
the problem.  To this end, let us consider the univariate polyellipse
$\mathcal{E}_{\ellrad_{s,j}}$.  We first prove that this polyellipse
is contained in the following complex rectangle:
\[
\mathcal{R}_j = \{ z \in \mathbb{C} : |\real{z}| \leq \sqrt{1+\tau_{s,j}^2}, \,\, |\im{z}| \leq \tau_{s,j}  \}.
\]
The bound on the imaginary part of
$z$ is a  consequence of the choice of the polyellipse in
\eqref{eq:bernstein_polyellipse} and
\eqref{eq:ellrad_dep_on_b_s}. For the real part, we compute the point
$z_0$ where the polyellipse intersects the real axis by equating
\begin{align*}
  z_0 =
  & \frac{\ellrad_{s,j} + \frac{1}{\ellrad_{s,j}}}{2}
  = \frac{\ellrad_{s,j}^2 + 1}{2 \ellrad_{s,j}}
  = \frac{\tau_{s,j}^2 + 1 + \tau_{s,j} \sqrt{\tau_{s,j}^2+1}}{\tau_{s,j} +\sqrt{\tau_{s,j}^2+1}} \\
  & = \left( \tau_{s,j}^2+1 + \tau_{s,j} \sqrt{\tau_{s,j}^2+1} \right) \left( \sqrt{\tau_{s,j}^2+1} - \tau_{s,j} \right)
  = \sqrt{1+\tau_{s,j}^2}.
\end{align*}
Furthermore, we observe that
$|z| \leq \sqrt{1 + 2 \tau_{s,j}^2} \leq K \tau_{s,j}$
for every $z \in \mathcal{R}_j$ and some $K>0$;
for instance, we could look for the smallest $\tau_{s,j}$, say
$\tau_{s,j^*}$, choose $K$ accordingly, i.e., such that
$(K^2-2)\tau_{s,j^*}^2 \geq 1$, and obtain the value in the statement
of the lemma.
Next, according to \eqref{eq:C_s_norm_of_a} and Assumption~\ref{assump:b_and_p},
\[
\norm{a(\cdot,\zz)}{C^s(\overline{\mathscr{B}},\mathbb{C})}
\leq  \frac{s!}{(\log 2)^s}\norm{a(\cdot,\zz)}{C^0(\overline{\mathscr{B}},\mathbb{C})}(1+\norm{\kappa(\cdot,\zz)}{C^s(\overline{\mathscr{B}},\mathbb{C})})^s
\leq \frac{s!}{(\log 2)^s} e^{\sum_{j \in \finiteset} b_{0,j} |z_j|} \left(1+\sum_{j \in \finiteset} b_{s,j} |z_j|\right)^s
\]
holds.
We finish the proof by observing that for every, $\zz \in \mathcal{E}_{\finiteset,\vec{\ellrad}_s}$, we have
\[
\sum_{j \in \finiteset} b_{0,j} |z_j|
\leq \sum_{j \in \finiteset} b_{s,j} |z_j|
\leq K \sum_{j \in \finiteset} b_{s,j} \tau_{s,j}
=  K  \frac{\pi}{E_\delta} \sum_{j \in \finiteset} b_{s,j} \frac{(b_{s,j})^{p_s-1}}{\norm{\bb_s}{\ell^{p_s}}^{p_s}}
\leq K  \frac{\pi}{E_\delta},
\]
which gives uniform control of the norm of $\norm{a(\cdot,\zz)}{C^s(\overline{\mathscr{B}},\mathbb{C})}$ within
$\mcE_{\vec{\ellrad}_s}^\finiteset$ as required. More precisely, we have
\[
\norm{a(\cdot,\zz)}{C^s(\overline{\mathscr{B}},\mathbb{C})}
\leq M = \frac{s!}{(\log 2)^s} e^{K  \frac{\pi}{E_\delta}} \left( 1 + K \frac{\pi}{E_\delta} \right)^s,
\quad \forall \zz \in \mathcal{E}_{\finiteset,\vec{\ellrad}_s},
\]
which together with Assumption~\ref{assump:shift} gives the desired
bound on
$\norm{u(\cdot,\zz)}{H^{1+s}(\mathscr{B})}$ in \eqref{eq:supz-u-H-1-s-bound} and
\[
\real{a(\xx,\zz)} \geq e^{- K \frac{\pi}{E_\delta}} =: \tilde{\delta} > 0.
\]

\end{proof}
The following result from
  \cite{hajiali.eal:MISC1,nobile.eal:optimal-sparse-grids} is also needed.
Since this result is concerned with the finite-dimensional case, i.e., $\finiteset=\{1,2,\ldots,N\}$
and $\vec{\ellrad} \in \rset^N$, we write, for ease of notation, $\mcE_{\vec{\ellrad}}$ instead of $\mcE_{\vec{\ellrad}}^\finiteset$,
i.e., $\mcE_{\vec{\ellrad}} = \{ \zz \in \mathbb{C}^N : z_j \in \mcE_{\ellrad_j} \mbox{ for } j=1,2,\ldots,N\}$.

\begin{lemma}[Chebyshev expansion of a holomorphic function]\label{lemma:cheb_conv}
  Given $q_j \in \nset$, let $\phi_{q_j}$ be the family of Chebyshev %
  polynomials of the first kind on $[-1,1]$, i.e.,
    $|\phi_{q_j}(y)|\leq 1$ for all $y \in [-1,1]$, and, for
    $N \in \nset_+$ and any $\pp \in \nset^N$, let
    $\Phi_\pp(\yy) =\prod_{j=1}^N \phi_{p_j}(y_j)$.  If
    $f:[-1,1]^N \to \rset$ admits a holomorphic complex extension in a
    Bernstein polyellipse, $\mcE_{\vec{\ellrad}}$, for some
    $\vec{\ellrad} \in (1, \infty)^N$ and if there exists $0< C_f < \infty$ such that
    $\sup_{\zz \in \mcE_{\vec{\ellrad}}} |f(\zz)| \leq
    C_f$, %
    then $f$ admits the following Chebyshev expansion:
  \begin{align*}
  & f(\yy)  = \sum_{\pp \in \nset^N} f_\pp\Phi_\pp(\yy), \\
  & f_\pp  = \frac{1}{\int_{[-1,1]^N} \Phi_\pp^2(\yy) \pdf_C(\yy) d\yy}
  \int_{[-1,1]^N} f(\yy) \Phi_\pp(\yy) \pdf_C(\yy) d\yy,
  \quad \pdf_C(\yy) = \prod_{j=1}^N \left(\sqrt{1-y_j^2}\right)^{-1},
  \end{align*}
  which converges uniformely in $\mcE_{\vec{\ellrad}}$. Moreover the
  following bound on the coefficients $f_\pp$ holds:
  \begin{equation*}
    |f_\pp| \leq \sup_{\zz \in \mcE_{\vec{\ellrad}}} |f(\zz)|
    2^{|\pp|_0} \prod_{j=1}^N \ellrad_{j}^{-p_j},
  \end{equation*}
  where $|\pp|_0$ denotes the number of non-zero elements of $\pp$.
\end{lemma}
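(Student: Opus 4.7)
The plan is to reduce the multivariate statement to the univariate one by expanding in Chebyshev series variable by variable, exploiting the tensor-product structure of the Bernstein polyellipse $\mcE_{\vec{\ellrad}}$. The univariate case is the core of the argument, and rests on the classical Joukowski transformation.

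For $N=1$, I would introduce the map $J(w)=\tfrac{1}{2}(w+w^{-1})$, which sends the open annulus $A_{\ellrad}=\{w\in\mathbb{C}:\ellrad^{-1}<|w|<\ellrad\}$ biholomorphically (as a two-to-one covering identifying $w$ and $w^{-1}$) onto the interior of $\mcE_{\ellrad}$, with $J(e^{i\vartheta})=\cos\vartheta$. Since $f$ is holomorphic in $\mcE_{\ellrad}$, the pullback $F(w):=f(J(w))$ is holomorphic in $A_{\ellrad}$ and therefore admits a Laurent expansion $F(w)=\sum_{n\in\zset}c_n w^n$ that converges uniformly on compact subsets. The symmetry $J(w)=J(w^{-1})$ forces $c_n=c_{-n}$, and the identity $\phi_n(\cos\vartheta)=\cos(n\vartheta)$ lets me rewrite the series as a Chebyshev series on $[-1,1]$: $f(y)=c_0+\sum_{n\geq 1}2c_n\phi_n(y)$. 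Matching this with the definition of $f_{\pp}$ in the statement gives $f_0=c_0$ and $f_n=2c_n$ for $n\geq 1$.

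For the coefficient bound I would use the Cauchy formula $c_n=\frac{1}{2\pi i}\oint_{|w|=\ellrad^{-}} F(w)w^{-n-1}\,dw$ on a circle slightly inside $A_{\ellrad}$, pass to the limit $|w|\to\ellrad$ using the uniform bound $|F(w)|\le C_f$ inherited from $|f|\le C_f$ on $\mcE_{\ellrad}$, and obtain $|c_n|\le C_f\,\ellrad^{-n}$. This yields $|f_n|\le 2C_f\ellrad^{-n}$ for $n\geq 1$ and $|f_0|\le C_f$, which is precisely the univariate version of the claimed estimate with the factor $2^{|n|_0}$.

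For the multivariate extension I would argue inductively on the coordinates. Fixing $(z_2,\dots,z_N)\in\mcE_{\ellrad_2}\times\cdots\times\mcE_{\ellrad_N}$, the function $z_1\mapsto f(z_1,z_2,\dots,z_N)$ is holomorphic in $\mcE_{\ellrad_1}$ with supremum bounded by $C_f$, so the univariate case gives a Chebyshev expansion in $y_1$ whose coefficients $f_{p_1}(y_2,\dots,y_N)$ are themselves holomorphic in $(z_2,\dots,z_N)\in\mcE_{\ellrad_2}\times\cdots\times\mcE_{\ellrad_N}$ (by Cauchy's integral representation) and bounded uniformly by $2^{\mathbf{1}_{\{p_1\ne 0\}}}C_f\,\ellrad_1^{-p_1}$. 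Iterating the univariate result on $y_2,\dots,y_N$ and multiplying the bounds produces the stated estimate $|f_\pp|\le C_f\,2^{|\pp|_0}\prod_{j=1}^N\ellrad_j^{-p_j}$. Uniform convergence on $\mcE_{\vec{\ellrad}}$ follows by combining this with the classical bound $|\phi_n(z)|\le\tfrac{1}{2}(\ellrad^n+\ellrad^{-n})$ for $z\in\mcE_{\ellrad}$: the resulting series is dominated by $C_f\prod_{j=1}^N\sum_{p_j\geq 0}2^{\mathbf{1}_{\{p_j\ne 0\}}}\ellrad_j^{-p_j}\cdot\tfrac{1}{2}(\ellrad_j^{p_j}+\ellrad_j^{-p_j})$, and each factor is a convergent geometric series since $\ellrad_j>1$.

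The only delicate point I expect is keeping the dependence on the number of nonzero indices clean: one must avoid the suboptimal bound $2^N$ and instead carry the $2^{|\pp|_0}$ factor through the induction by observing that the doubling ($f_n=2c_n$ versus $f_0=c_0$) only occurs on coordinates where $p_j\neq 0$. Everything else is a routine assembly of Cauchy estimates and tensorization.
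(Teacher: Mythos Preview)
The paper does not actually prove this lemma; it is quoted from \cite{hajiali.eal:MISC1,nobile.eal:optimal-sparse-grids} and stated without proof, so there is no in-paper argument to compare against. Your route via the Joukowski map $J(w)=\tfrac12(w+w^{-1})$, Laurent expansion in the annulus, the symmetry $c_n=c_{-n}$, and Cauchy estimates on circles $|w|\to\ellrad^-$ is the classical derivation, and your tensorization by induction on the coordinates is the standard way to lift it to $N$ variables. In particular, your tracking of the factor $2^{|\pp|_0}$ (from the dichotomy $f_0=c_0$ versus $f_n=2c_n$ for $n\geq 1$) rather than the crude $2^N$ is exactly the point of the bound and is handled correctly.

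There is, however, a genuine gap in your uniform-convergence step. The majorant you propose,
\[
\sum_{p\geq 0}2^{\mathbf{1}_{\{p\ne 0\}}}\ellrad^{-p}\cdot\tfrac12(\ellrad^{p}+\ellrad^{-p})
= 1 + \sum_{p\geq 1}\bigl(1+\ellrad^{-2p}\bigr),
\]
diverges (its general term tends to $1$), so it cannot establish uniform convergence on the full closed polyellipse $\mcE_{\vec{\ellrad}}$. The standard fix is to use that holomorphy on the closed $\mcE_{\vec{\ellrad}}$ entails holomorphy on some slightly larger $\mcE_{\vec{\ellrad}'}$ with $\ellrad_j'>\ellrad_j$; running your Cauchy estimate there gives $|f_\pp|\le C'\prod_j(\ellrad_j')^{-p_j}$, and combining with $|\Phi_\pp(\zz)|\le \prod_j \ellrad_j^{p_j}$ on $\mcE_{\vec{\ellrad}}$ yields a genuinely summable majorant $\prod_j(\ellrad_j/\ellrad_j')^{p_j}$. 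This repairs the convergence claim without affecting the coefficient bound, which you obtain correctly with the stated constant $C_f$ and rates $\ellrad_j$.
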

\section{The Multi-index Stochastic Collocation method}\label{s:MISC-method}

In this section, we introduce approximations of $\E{\qoi}$ along the
deterministic and stochastic dimensions and their decomposition in
terms of tensorizations of univariate difference operators.  We then
recall the so-called mixed difference operators and the construction of
the MISC estimator, that was first introduced in
\cite{hajiali.eal:MISC1} in a general setting.

\subsection{Approximation along the deterministic and stochastic variables}

\paragraph{A tensorized deterministic solver.}
Let $\{\mesh_i\}_{i=1}^D$ be the triangulations/meshes of each of
  the subdomains $\{\mathscr{B}_i\}_{i=1}^D$ composing the domain
  $\mathscr{B}$; denote by $\{h_i\}_{i=1}^D$ the mesh-size on the mesh
  $\mesh_i$; and let $\bigotimes_{i=1}^D \mesh_i$ be the mesh for
  $\mathscr{B}$.  Then, consider a numerical method for the
  approximation of the solution of \eqref{eq:PDE1} for a fixed value
  of the random variables, $\yy$, based on such a mesh, e.g., finite
  differences, finite volumes,  tensorized finite elements, or $h$-refined
  splines, such as those used in the isogeometric context.  The
  values of $h_i$ are actually given as functions of a positive integer
  value, $\alpha \geq 1$, referred to as a ``deterministic
  discretization level'', i.e., $h_i=h_i(\alpha)$.  Observe that,
  in a more general setting, the mesh-size needs not be a constant value over the subdomain
  $\mathscr{B}_i$ and could be for instance the result of a grading
  function intended to refine subregions of $\mathscr{B}_i$ as in
  Figure~\ref{subf-nont} (see also \cite[Remark
  2.2]{abdullatif:meshMLMC} for further comments on locally refined
  meshes in the context of Multi-Level Monte Carlo methods). In this
  work, we restrict ourselves to constant $h$ for ease of
  presentation.  Given a multi-index, $\valpha \in \nset^D_+$, we
  denote by $u^\valpha(\xx,\yy)$ the approximation of $u$ obtained by
  setting $h_i = h_i(\alpha_i)$ and use notation
  $\qoi^\valpha(\yy) = \Theta[u^\valpha(\cdot,\yy)]$.
  More specifically, in the following we will consider
    \begin{equation}\label{eq:choice_of_h_and_FEM}
      h_i=h_{0,i} 2^{-\alpha_i}, \quad \mbox{for }  i=1,\ldots,D
    \end{equation}
    and a method obtained by tensorizing piecewise multi-linear finite element spaces on each mesh,
  $\{\mesh_i\}_{i=1}^D$, discretizing each $\{\mathscr{B}_i\}_{i=1}^D$.

  As already mentioned in the previous section, MISC could also be
  applied to more general domains, such as those discussed in
  Figure~\ref{fig:domains}, as long as some kind of ``tensor
  structure'' can be induced from the shape of the domain to the
  solver of the deterministic problem and the vector $\valpha$
  determines the refinement level of each component of such a tensor
  structure. The reason why we need such tensor strucure will be made
  clear when we introduce the classic sparse-grids approach to
  solve the problem.
  For non-tensorial domains, we can always set $D=1$ and consider
  an unstructured mesh for the whole domain, $\mathscr{B}$, having
  only one discretization level $\alpha \in \nset_+$.
  In this way, we give up the sparse-grid approach on the
  deterministic part of the problem and obtain a variant of the
  Multi-Level Stochastic Collocation method discussed in
  \cite{teckentrup.etal:MLSC,van-wyk:MLSC}, yet with a different algorithm for
  combining spatial and stochastic discretizations. See Remark \ref{remark:MISC_is_more_general}
  stated next and \cite{hajiali.eal:MISC1} for additional discussion on this aspect.

It would be straightforward to extend this setting to discretization
methods based on degree-elevation rather than on mesh-refinement, such
as spectral methods, $p$-refined finite elements or $p$- and
$k$-refined splines. However, here we limit ourselves to the setting
defined above. It would also be
possible to include time-dependent problems in this framework, but in
this case we might need to take care of possible constraints on
discretization parameters, such as CFL conditions; a broader
generalization could also include ``non-physical'' parameters such as
tolerances for numerical solvers.  Finally, more general problems,
e.g., those depending on random variables with probability
distributions other than uniform distributions or with uncertain
boundary conditions and/or forcing terms could also be addressed
  with suitable modifications of the MISC methodology.

\paragraph{Tensorized quadrature formulae for expected value approximation.}
Similarly to what was presented for the deterministic problem, we base
our approximation of the expected value of $\qoi^\valpha(\yy)$ on a
tensorization of quadrature formulae over the stochastic domain,
$\Gamma$.  Assumptions~\ref{assump:shift} and~\ref{assump:b_and_p}
guarantee that $\qoi^\valpha(\yy)$ is actually a continuous function,
even holomorphic, over $\Gamma$. A quadrature approach is thus sound.

Let $C^0([-1,1])$ be the set of real-valued continuous functions over $[-1,1]$,
$\beta \in \nset_+$ be referred to as a ``stochastic discretization level'',
and $m: \nset \rightarrow \nset$ be a strictly increasing function
with $m(0)=0$ and $m(1)=1$, that we call a ``level-to-nodes function''.
At level $\beta$, we  consider a set of $m(\beta)$ distinct quadrature points in $[-1,1]$,
$\quadpointset^{m(\beta)} = \{y_\beta^1, y_\beta^2, \ldots, y_\beta^{m(\beta)}\} \subset [-1,1]$,
and a set of quadrature weights,
$\mathcal{W}^{m(\beta)}=\{\varpi_\beta^1, \varpi_\beta^2, \ldots,
\varpi_\beta^{m(\beta)}\}$. We then define the quadrature operator as
\begin{equation}\label{eq:unvariate_quad_def}
  Q^{m(\beta)} : C^0([-1,1]) \rightarrow \rset, \qquad
  Q^{m(\beta)}[f] = \sum_{j=1}^{m(\beta)} f(y_\beta^j) \varpi_\beta^j.  %
\end{equation}
The quadrature weights are selected such that
$Q^{m(\beta)}[y^k] = \int_{-1}^1 \frac{y^k}{2} dy, \quad \forall \,
k=0,1,\ldots, m(\beta)-1$. The quadrature points are chosen to
optimize the convergence properties of the quadrature error (the
specific choice of quadrature points is discussed later in this
section). In particular, for symmetry reasons, we define the trivial
operator $Q^{1}[f] = f(0) \,,\, \forall f \in C^0([-1,1])$.

Defining a quadrature operator over $\Gamma$ is more delicate, since
$\Gamma$ is defined as a countable tensor product of intervals. To this end,
we follow \cite{schillings.schwab:inverse} and define,
for any finitely supported multi-index $\vbeta \in \finitesuppset_+$,
\[%
  \mathscr{Q}^{m(\vbeta)}: \Gamma \rightarrow \rset, \quad \mathscr{Q}^{m(\vbeta)}  = \bigotimes_{j \geq 1} Q^{m(\beta_j)},
\]%
where the $j$-th quadrature operator is understood to act only on the $j$-th variable of $f$, and
the tensor product is well defined since it is composed of finitely
many non-trivial factors (see \cite{schillings.schwab:inverse} again).
In practice, the value of $\mathscr{Q}^{m(\vbeta)}[f]$ can be obtained by considering
the tensor grid $\mathscr{T}^{m(\vbeta)} = \bigtimes_{j\geq 1} \quadpointset^{m(\beta_j)}$
with cardinality $ \#\mathscr{T}^{m(\vbeta)} = \prod_{j \geq 1} m(\beta_j)$ and computing
\begin{align*}
\mathscr{Q}^{m(\vbeta)}[f] = \sum_{j=1}^{\# \mathscr{T}^{m(\vbeta)} } f(\widehat{\yy}_j) w_j,
\end{align*}
where $\widehat{\yy}_j \in \mathscr{T}^{m(\vbeta)}$ and $w_j$
are (infinite) products of weights of the univariate quadrature rules.
Notice that having $m(1)=1$  is essential in this construction so that the cardinality
of $\mathscr{T}^{m(\vbeta)}$ is finite for any $\vbeta \in \finitesuppset_+$ and
$\varpi_{\beta_j}^1=1$ whenever $\beta_j=1$. All weights, $w_j$, are thus bounded.

Coming back to the choice of the univariate quadrature points, it is
recommended, for optimal performance, that they are chosen according
to the underlying measure, $\di{\lambda}/2$. Moreover, since we aim at
a hierarchical decomposition of the operator,
$\mathscr{Q}^{m(\vbeta)}$, it is useful (although not necessary, see
e.g., \cite{nobile.eal:optimal-sparse-grids}) that the nodes be
\emph{nested} collocation points, i.e.,
$\quadpointset^{m(\beta)} \subset \quadpointset^{m(\beta+1)}$ for any
$\beta \geq 1$.  Thus, we consider Clenshaw-Curtis points that are
defined as
\begin{equation}\label{eq:CC_def}
  y_\beta^j=\cos\left(\frac{(j-1) \pi}{m(\beta)-1}\right), \quad 1\leq j \leq m(\beta).
\end{equation}
Clenshaw-Curtis points are nested provided that the level-to-nodes function is defined as
\begin{equation}\label{eq:m_CC}
m(0)=0,\,\,m(1)=1,\,\, m(\beta)=2^{\beta-1}+1.
\end{equation}
We close this section by mentioning that another family of nested
points for uniform measures available in the literature is the Leja
points, whose performance is equivalent to that of
Clenshaw-Curtis points for quadrature purposes. See, e.g.,
\cite{Chkifa:leja, nobile.etal:leja,narayan:Leja,schillings.schwab:inverse} and
references therein for definitions and comparison.

\subsection{Construction of the MISC estimator}%

It is straightforward to see that a direct approximation,
$\E{\qoi} \approx \mathscr{Q}^{m(\vbeta)} [\qoi^\valpha]$, is not a
viable option in practical cases, due to the well-known ``curse of
dimensionality'' effect.  In \cite{hajiali.eal:MISC1}, we proposed to
use MISC as a computational strategy to combine spatial and stochastic
discretizations in such a way as to obtain an effective approximation
scheme for $\E{\qoi}$.

MISC is based on a classic sparsification approach in which approximations
like $\mathscr{Q}^{m(\vbeta)}[\qoi^\valpha]$
are decomposed in a hierarchy of operators. Only the most important of
these operators are retained in the approximation.
In more detail, let us denote for brevity $\mathscr{Q}^{m(\vbeta)} [\qoi^\valpha] = \descqoi{\valpha}{\vbeta}$
and introduce the first-order difference operators for the deterministic and
stochastic discretization operators, denoted respectively by
$\Delta_{i}^\textnormal{det}$ with $1\le i\le D$ and $\Delta_{j}^\textnormal{stoc}$ with $j \geq 1$:
\begin{align*}%
& \Delta_{i}^\textnormal{det}[\descqoi{\valpha}{\vbeta}] =
\begin{cases}
  \descqoi{\valpha}{\vbeta} - \descqoi{\valpha-\ee_i}{\vbeta}, & \text{if }\alpha_i > 1,\\
  \descqoi{\valpha}{\vbeta} & \text{if } \alpha_i=1,
  \end{cases} \\% \label{eq:delta_det_def} \\
& \Delta_{j}^\textnormal{stoc}[\descqoi{\valpha}{\vbeta}]    =
\begin{cases}
  \descqoi{\valpha}{\vbeta} - \descqoi{\valpha}{\vbeta-\ee_j}, & \text{if }\beta_j > 1,\\
  \descqoi{\valpha}{\vbeta} & \text{if } \beta_j=1.
  \end{cases} %
\end{align*}
As a second step, we define the so-called mixed difference operators,
\begin{align}
& \vec \Delta^\textnormal{det}[\descqoi{\valpha}{\vbeta}]
= \bigotimes_{i=1}^D \Delta_i^\textnormal{det}[\descqoi{\valpha}{\vbeta}]
= \Delta_1^\textnormal{det} \left[ \, \Delta_2^\textnormal{det} \left[ \, \cdots
  \Delta_D^\textnormal{det} \left[ \descqoi{\valpha}{\vbeta} \right] \, \right] \, \right] %
= \sum_{\ii \in \{0,1\}^D} (-1)^{|\ii|} \descqoi{\valpha-\ii}{\vbeta},
\label{eq:delta_det_def} \\
& \vec \Delta^\textnormal{stoc}[\descqoi{\valpha}{\vbeta}]
= \bigotimes_{j \geq 1} \Delta_j^\textnormal{stoc}[\descqoi{\valpha}{\vbeta}]
= \sum_{\jj \in \{0,1\}^{\nset_+}} (-1)^{|\jj|} \descqoi{\valpha}{\vbeta-\jj} \, ,
\label{eq:delta_stoc_def}
\end{align}
with the convention that $\descqoi{\vec{v}}{\vec{w}}=0$ whenever a component of $\vec{v}$ or $\vec{w}$ is zero.
Notice that, since $\vbeta$ has finitely many components larger than 1, the sum on the right-hand side
of \eqref{eq:delta_stoc_def} contains only a finite number of terms.
Finally, letting
\begin{equation}\label{eq:deltaF_def}
  \vec \Delta[\descqoi{\valpha}{\vbeta}] =
  \vec \Delta^\textnormal{stoc}[ \vec \Delta^\textnormal{det}[\descqoi{\valpha}{\vbeta}]],
\end{equation}
we define the Multi-index Stochastic Collocation (MISC) estimator of $\E{\qoi}$ as
\begin{equation}\label{eq:misc_estimator}
  \mathscr{M}_{\mathcal{I}}[\qoi]
  = \sum_{[\valpha, \vbeta] \in \mathcal I} \vec \Delta[\descqoi{\valpha}{\vbeta}]
  = \sum_{ [\valpha, \vbeta] \in \mathcal I}  c_{\valpha,\vbeta} \descqoi{\valpha}{\vbeta}, \quad
  c_{\valpha,\vbeta} =  \sum_{\substack{[\ii,\jj] \in \{0,1\}^{D+\nset} \\[1pt] [\valpha+\ii,\vbeta+\jj] \in \mathcal{I}}}(-1)^{|[\ii,\jj]|_0},
\end{equation}
where $\mathcal{I} \subset \nset_+^{D} \times \finitesuppset_+$.  The
second form of the MISC estimator is known as the ``combination
technique'', since it expresses the MISC approximation as a linear
combination of a number of tensor approximations,
$\descqoi{\valpha}{\vbeta}$, and might be useful for the practical
implementation of the method; we observe in particular that many of its
coefficients, $c_{\valpha,\vbeta}$, are zero.

The effectiveness of MISC crucially depends on the choice of the index
set, $\mathcal{I}$.  Given the hierarchical structure of MISC, a
natural requirement is that $\mathcal{I}$ should be downward closed,
i.e.,
\begin{align*}
  \forall \,[\valpha, \vbeta] \in \mathcal{I}, \quad
  \begin{cases}
    [\valpha - \vec{e}_i, \vbeta] \in \mathcal{I} \mbox{ for all } 1 \leq
    i \leq D \mbox{ such that } \alpha_i > 1,\\
    [\valpha, \vbeta - \vec{e}_j] \in \mathcal{I} \mbox{ for all }  j \geq
    1 \mbox{ such that } \beta_j > 1 %
  \end{cases}
\end{align*}
(see also \cite{nobile.eal:optimal-sparse-grids,wasi.wozniak:cost.bounds,b.griebel:acta}).
In addition to this general constraint, in \cite{hajiali.eal:MISC1} we have
detailed a procedure to derive an efficient set, $\mathcal{I}$,
based on an optimization technique inspired by the Dantzig algorithm for the approximate
solution of the knapsack problem (see \cite{martello:knapsack}).
In the following, we briefly summarize this procedure and refer to \cite{hajiali.eal:MISC1}
as well as to \cite{nobile.eal:optimal-sparse-grids,back.nobile.eal:optimal,b.griebel:acta}
for a thorough discussion on the similarities between this procedure and the Dantzig algorithm.

The first step of our optimized construction consists of introducing
the ``work contribution'', $\Delta W_{\valpha, \vbeta}$, and ``error
contribution'', $\Delta E_{\valpha, \vbeta}$, for each operator,
$\vec
\Delta[\descqoi\valpha\vbeta]$. %
The work contribution measures the computational cost (measured,
e.g., as a function of the total number of degrees of freedom, or in
terms of computational time) required to add
$\vec \Delta[\descqoi\valpha\vbeta]$ to
$\mathscr{M}_\mathcal{I}[\qoi]$, i.e.,
\begin{equation}\label{eq:DeltaW_def}
\Delta W_{\valpha, \vbeta}
= \work{\mathscr{M}_{\mathcal{I} \cup \{[\valpha,\vbeta]\}}} - \work{\mathscr{M}_{\mathcal{I}}}
= \work{\vec \Delta[\descqoi\valpha\vbeta]}.
\end{equation}
Similarly, the error contribution measures how much the error,
$|\E{\qoi} - \mathscr{M}_\mathcal{I}[\qoi]| $, would decrease if the
operator $\vec \Delta[\descqoi\valpha\vbeta]$ were added to
$\mathscr{M}_\mathcal{I}[\qoi]$,
\begin{equation}\label{eq:DeltaE_def}
  \Delta E_{\valpha, \vbeta}
= \left| \mathscr{M}_{\mathcal{I} \cup \{[\valpha,\vbeta]\}}[\qoi] - \mathscr{M}_{\mathcal{I}}[\qoi] \right|
= \left| \vec \Delta[\descqoi{\valpha}{\vbeta}] \right|.
\end{equation}
We observe that the following decompositions of the total work and error of the MISC estimator hold:
\begin{align}
& \work{\mathscr{M}_\mathcal{I}} = \sum_{[\valpha, \vbeta] \in \mathcal{I}} \Delta W_{\valpha, \vbeta},  \label{eq:work_decomp} \\
& |\E{\qoi} - \mathscr{M}_\mathcal{I}[\qoi]|
  = \left| {\sum_{[\valpha, \vbeta] \notin \mathcal I} \vec \Delta[\descqoi\valpha\vbeta]} \right|
 \leq \sum_{[\valpha, \vbeta] \notin \mathcal I} \left| { \vec \Delta[\descqoi\valpha\vbeta] } \right|
 = \sum_{[\valpha, \vbeta] \notin \mathcal I} \Delta E_{\valpha, \vbeta}. \label{eq:error-decomp}
\end{align}

Although it would be tempting to define $\mathcal{I}$ as the set of
couples $[\valpha,\vbeta]$ with the largest error contribution, this
choice could be far from optimal in terms of computational cost. As
suggested in the literature on the knapsack problem (see
\cite{martello:knapsack}), the benefit-to-cost ratio should be taken
into account in the decision (see also
\cite{hajiali.eal:MISC1,nobile.eal:optimal-sparse-grids,back.nobile.eal:optimal,b.griebel:acta,griebel.knapek:optimized}).
More precisely, we propose to build the MISC estimator by first
assessing the so-called ``profit'' of each operator
$\vec \Delta[\descqoi{\valpha}{\vbeta}]$, i.e., the quantity
 \[%
   P_{\valpha, \vbeta} = \frac{\Delta E_{\valpha, \vbeta}}{\Delta W_{\valpha, \vbeta}}.
 \]
Then we build an index set for the MISC estimator:
\begin{equation}\label{eq:opt_set}
\mathcal{I} = \mathcal{I}(\epsilon) =\left\{ [\valpha, \vbeta] \in \nset_+^D \times \finitesuppset_+\::\:
  P_{\valpha, \vbeta} \geq \epsilon \right\},
\end{equation}
for a suitable $\epsilon>0$. We observe that the obtained set is
not necessarily downward-closed; we have to enforce this condition
a posteriori. Obviously, $\Delta E_{\valpha, \vbeta}$ and
$\Delta W_{\valpha, \vbeta}$ are not, in general, at our disposal. In
practice, we base the construction of the MISC estimator on a-priori
bounds for such quantities.  More precisely, we derive a-priori
ansatzes for these bounds from theoretical considerations and then fit
the constants appearing in the ansatzes with some auxiliary
computations. We refer to the entire strategy as a priori/a
posteriori.

\begin{remark}\label{remark:MISC_is_more_general}
  We remark that the general form of the MISC estimator
  \eqref{eq:misc_estimator} is quite broad and includes other related
  methods (i.e., methods that combine different spatial and stochastic
  discretization levels to optimize the computational effort)
  available in the literature, such as the ``Multi Level Stochastic
  Collocation'' method \cite{teckentrup.etal:MLSC,van-wyk:MLSC} and
  the ``Sparse Composite Collocation'' method
  \cite{bieri:sparse.tensor.coll}; see also \cite{hps13}. The main
  novelty of the MISC estimator
  \eqref{eq:misc_estimator}-\eqref{eq:opt_set} with respect to such
  methods is the profit-oriented selection of difference
  operators. Another difference from
    \cite{teckentrup.etal:MLSC,van-wyk:MLSC} is the fact that
  difference operators in our approach are introduced in both the
  spatial and stochastic domains. See also
    \cite{bieri:sparse.tensor.coll,griebel.eal:non-nested-MLMC} for a
    similar construction, in which no optimization is performed.
  More details on the comparison between the above-mentioned methods
  and MISC can be found in \cite{hajiali.eal:MISC1}.
\end{remark}

\section{Error Analysis of the MISC method}\label{s:err}

In this section, we state and prove a convergence theorem for the
profit-based MISC estimator based on the multi-index set
\eqref{eq:opt_set}. The theorem is based on a result from the previous
work \cite{nobile.eal:optimal-sparse-grids}, which was proved in the
context of sparse-grid approximation of Hilbert-space-valued
functions. Since the sparse grid and the MISC constructions are
identical, this theorem can be used verbatim here. In particular, it
links the summability of the profits to the convergence rate of
methods such as MISC and Sparse Grids Stochastic Collocation, i.e.,
based on a sum of difference operators.  To use this result, we have
to assess the summability properties of the profits. We thus introduce
suitable estimates of the error and work contributions,
$\Delta E_{\valpha,\vbeta}$ and $\Delta W_{\valpha,\vbeta}$,
respectively.  In particular, the estimate of
$\Delta E_{\valpha,\vbeta}$ depends on the spatial regularity of the
solution, on the convergence rate of the method used to solve the
deterministic problems, and on the summability property of the
Chebyshev expansion of the solution over the parameter space.

\begin{theorem}[Convergence of the profit-based MISC estimator, see  \cite{nobile.eal:optimal-sparse-grids}]\label{theo:general-conv-result}
If the profits, $P_{\valpha,\vbeta}$, satisfy the weighted summability condition
\[%
\left( \sum_{  [\valpha,\vbeta] \in \nset_+^D \times \finitesuppset_+} P_{\valpha,\vbeta}^{p} \Delta W_ {\valpha,\vbeta} \right)^{1/p} =C_P(p) < \infty
\]%
for some $0<p\leq 1$, then
\[%
\big| \E{\qoi} - \mathscr{M}_\mathcal{I}[\qoi] \big|  \leq
C_P(p) \work{\mathscr{M}_{\mathcal{I}}}^{1-1/p} ,
\]%
where $\work{\mathscr{M}_{\mathcal{I}}}$ is given by
\eqref{eq:work_decomp}. %
\end{theorem}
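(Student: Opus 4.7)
The plan is to exploit the defining property of the index set $\mathcal{I} = \mathcal{I}(\epsilon)$, namely that membership is decided by the profit threshold $\epsilon$, and to tie this threshold simultaneously to the error (from below, on the tail $\mathcal{I}^c$) and to the work (from above, on $\mathcal{I}$). The key device is an elementary Hölder-type splitting of the quantity $P_{\valpha,\vbeta} \Delta W_{\valpha,\vbeta} = \Delta E_{\valpha,\vbeta}$ as $P_{\valpha,\vbeta}^{p}\,P_{\valpha,\vbeta}^{1-p}\,\Delta W_{\valpha,\vbeta}$, which converts a statement about the profit threshold into a statement about the weighted $\ell^p$-sum controlled by $C_P(p)$.

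First, starting from the error decomposition \eqref{eq:error-decomp} and writing $\Delta E_{\valpha,\vbeta} = P_{\valpha,\vbeta}\,\Delta W_{\valpha,\vbeta}$, for every $[\valpha,\vbeta]\notin\mathcal{I}$ we have $P_{\valpha,\vbeta}<\epsilon$, hence
\[
\big|\E{\qoi}-\mathscr{M}_\mathcal{I}[\qoi]\big|
\leq \sum_{[\valpha,\vbeta]\notin\mathcal{I}} P_{\valpha,\vbeta}^{1-p}\,P_{\valpha,\vbeta}^{p}\,\Delta W_{\valpha,\vbeta}
\leq \epsilon^{1-p}\sum_{[\valpha,\vbeta]\notin\mathcal{I}} P_{\valpha,\vbeta}^{p}\,\Delta W_{\valpha,\vbeta}
\leq \epsilon^{1-p}\,C_P(p)^{p}.
\]
Second, starting from the work decomposition \eqref{eq:work_decomp} and using the reverse inequality $P_{\valpha,\vbeta}\geq\epsilon$ on $\mathcal{I}$, I write $\Delta W_{\valpha,\vbeta} = P_{\valpha,\vbeta}^{-p}\,P_{\valpha,\vbeta}^{p}\,\Delta W_{\valpha,\vbeta}$ to obtain
\[
\work{\mathscr{M}_\mathcal{I}}
=\sum_{[\valpha,\vbeta]\in\mathcal{I}} P_{\valpha,\vbeta}^{-p}\,P_{\valpha,\vbeta}^{p}\,\Delta W_{\valpha,\vbeta}
\leq \epsilon^{-p}\sum_{[\valpha,\vbeta]\in\mathcal{I}} P_{\valpha,\vbeta}^{p}\,\Delta W_{\valpha,\vbeta}
\leq \epsilon^{-p}\,C_P(p)^{p},
\]
which rearranges to $\epsilon \leq C_P(p)\,\work{\mathscr{M}_\mathcal{I}}^{-1/p}$.

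Finally, raising this bound to the (nonnegative) power $1-p$ and substituting into the error estimate gives
\[
\big|\E{\qoi}-\mathscr{M}_\mathcal{I}[\qoi]\big|
\leq C_P(p)^{1-p}\,\work{\mathscr{M}_\mathcal{I}}^{-(1-p)/p}\,C_P(p)^{p}
= C_P(p)\,\work{\mathscr{M}_\mathcal{I}}^{\,1-1/p},
\]
which is exactly the claimed bound. There is no real obstacle here beyond the Hölder-splitting trick; the only mild subtleties are making sure the exponent $1-p$ is nonnegative so that the inequality $\epsilon\leq C_P(p)W^{-1/p}$ can be raised to that power without flipping, and noting that the weighted summability hypothesis with the same $p$ appearing in both the error and work bounds is precisely what allows a single parameter $\epsilon$ to balance the two. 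The downward-closedness that may need to be enforced a posteriori on $\mathcal{I}(\epsilon)$ only enlarges the index set, which can only decrease the error and, in the worst case, moderately increase the work; the estimate above is unaffected up to incorporating the corresponding additional profits, all of which also enter $C_P(p)$.
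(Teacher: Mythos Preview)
The paper does not give its own proof of this theorem; it is quoted verbatim from \cite{nobile.eal:optimal-sparse-grids} and used as a black box. Your argument is correct and is essentially the standard Stechkin-type splitting used there: bound the tail error by $\epsilon^{1-p}C_P(p)^p$ and the accumulated work by $\epsilon^{-p}C_P(p)^p$, then eliminate $\epsilon$. One small caveat: your closing remark on downward-closure is not quite right as stated---enlarging $\mathcal{I}$ to its downward closure adds indices with $P_{\valpha,\vbeta}<\epsilon$, so the work bound $\work{\mathscr{M}_\mathcal{I}}\le \epsilon^{-p}C_P(p)^p$ no longer follows from $P\ge\epsilon$ on $\mathcal{I}$; however, since $1-1/p\le 0$, the final inequality $|\E{\qoi}-\mathscr{M}_\mathcal{I}[\qoi]|\le C_P(p)\,\work{\mathscr{M}_\mathcal{I}}^{1-1/p}$ only becomes \emph{weaker} when $\work{\mathscr{M}_\mathcal{I}}$ increases, while the error side (a sum over the complement) only improves, so the statement for the profit set $\mathcal{I}(\epsilon)$ already implies the same bound for its downward closure with the larger work on the right-hand side.
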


We begin by introducing an estimate for the size of the work contribution,
$\Delta W_{\valpha,\vbeta}$. %
To this end, let $\Delta W_{\valpha}^{\textnormal{det}}=\work{\vec\Delta^{\textnormal{det}}[\qoi^{\valpha}]}$,
i.e., let it be the cost of computing $\vec\Delta^{\textnormal{det}}[\qoi^{\valpha}]$ according to
\eqref{eq:delta_det_def}.
\begin{assumption}[Spatial work contribution]\label{assump:work}
There exist $\gamma_i \in [1, \infty)$ for $i=1,\ldots,D$ and $C_W > 0$ such that
\begin{equation}\label{eq:dW_det_assump}
  \Delta W_{\valpha}^{\textnormal{det}} \leq C_{W} 2^{  \sum_{i=1}^D \gamma_i d_i \alpha_i },
  \end{equation}
  where $2^{\sum_{i=1}^D d_i \alpha_i}$ is proportional to the number of
  degress of freedom in the mesh on level $\valpha$, cf. equation \eqref{eq:choice_of_h_and_FEM},
  and $\gamma_i$ are related to the used deterministic solver
  and to the sparsity structure of the linear system,
  which might be different on each $\mathscr{B}_i$ depending on the chosen discretization.
\end{assumption}
\begin{lemma}[Total work contribution]\label{lemma:work}
  When using Clenshaw-Curtis points for the discretization over the parameter space,
  the work contribution, $\Delta W_{\valpha,\vbeta}$, of each difference operator,
  $\vec \Delta[\descqoi{\valpha}{\vbeta}]$, can be decomposed as
  \[
  \Delta W_{\valpha,\vbeta} \leq C_{W} 2^{\sum_{i=1}^D \gamma_i d_i \alpha_i + |\vbeta-\oone|},
  \]
  with $\gamma_i$ and $C_W$ as in Assumption~\ref{assump:work}.
\end{lemma}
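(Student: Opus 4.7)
The plan is to factor the total work $\Delta W_{\valpha,\vbeta} = \work{\vec\Delta[\descqoi{\valpha}{\vbeta}]}$ as a product of a stochastic factor (the number of new quadrature nodes introduced by the tensor difference $\vec\Delta^{\textnormal{stoc}}$) times a deterministic factor (the cost of computing the mixed spatial difference $\vec\Delta^{\textnormal{det}}[\qoi^{\valpha}]$ at each such node). The deterministic factor is already controlled by Assumption~\ref{assump:work}, which gives $\Delta W_{\valpha}^{\textnormal{det}} \leq C_W\, 2^{\sum_{i=1}^D \gamma_i d_i \alpha_i}$, so the core task is to count the new nodes.

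To count nodes, I would write $\vec\Delta^{\textnormal{stoc}} = \bigotimes_{j\geq 1} \Delta_j^{\textnormal{stoc}}$ as a tensor product of univariate differences and exploit the nestedness $\quadpointset^{m(\beta-1)} \subset \quadpointset^{m(\beta)}$ of Clenshaw--Curtis points. Because of nestedness, the incremental cost of $\Delta_j^{\textnormal{stoc}}$ over what is already available at lower stochastic levels equals the number of points in $\quadpointset^{m(\beta_j)}\setminus \quadpointset^{m(\beta_j-1)}$, with the convention $\quadpointset^{m(0)}=\emptyset$. Using the explicit rule \eqref{eq:m_CC}, $m(0)=0$, $m(1)=1$, $m(\beta)=2^{\beta-1}+1$ for $\beta\geq 2$, this count equals $1$ when $\beta_j=1$, $2$ when $\beta_j=2$, and $2^{\beta_j-2}$ when $\beta_j\geq 3$; in each case it is bounded from above by $2^{\beta_j-1}$.

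Taking the tensor product of these univariate counts, the number of new evaluation nodes associated with $\vec\Delta^{\textnormal{stoc}}$ is bounded by $\prod_{j\geq 1} 2^{\beta_j-1}$. Since $\vbeta\in\finitesuppset_+$, only finitely many factors differ from $1$, so the product is well defined and equal to $2^{\sum_{j\geq 1}(\beta_j-1)} = 2^{|\vbeta-\oone|}$. Multiplying this stochastic factor by the deterministic work per node yields
\[
\Delta W_{\valpha,\vbeta} \;\leq\; 2^{|\vbeta-\oone|}\cdot C_W\, 2^{\sum_{i=1}^D \gamma_i d_i \alpha_i} \;=\; C_W\, 2^{\sum_{i=1}^D \gamma_i d_i \alpha_i + |\vbeta-\oone|},
\]
as claimed.

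I do not expect any serious obstacle here: the argument is essentially bookkeeping on nested quadrature grids plus an invocation of Assumption~\ref{assump:work}. The only point that needs slight care is the handling of indices with $\beta_j=1$, for which $Q^{m(1)}$ is the trivial point-evaluation operator and contributes a factor $1$ to both the node count and the bound $2^{\beta_j-1}=1$, consistent with the fact that such coordinates do not enter $|\vbeta-\oone|$.
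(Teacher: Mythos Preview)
Your proposal is correct and follows essentially the same route as the paper: factor the work as (number of new quadrature nodes due to nestedness) times (cost of $\vec\Delta^{\textnormal{det}}[\qoi^{\valpha}]$ at each node), bound $m(\beta_j)-m(\beta_j-1)\leq 2^{\beta_j-1}$ via \eqref{eq:m_CC}, and invoke Assumption~\ref{assump:work}. Your version is in fact slightly more explicit than the paper's in spelling out the cases $\beta_j=1,2,\geq 3$ and in noting that the per-node deterministic cost is $\Delta W_{\valpha}^{\textnormal{det}}$ rather than just the cost of a single $F^{\valpha}$ evaluation.
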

\begin{proof}
  Combining \eqref{eq:DeltaW_def} and \eqref{eq:deltaF_def}, we have
  \[%
  \Delta W_{\valpha, \vbeta} = \work{ \vec \Delta^\textnormal{stoc}[ \vec \Delta^\textnormal{det}[\descqoi{\valpha}{\vbeta}]] }
  = \work{ \vec \Delta^\textnormal{stoc}[ \vec \Delta^\textnormal{det}[\mathscr{Q}^{m(\vbeta)}[F^\valpha(\cdot)]]] }.
  \]%
  Since the Clenshaw-Curtis points are nested, computing $\Delta
  W_{\valpha,\vbeta}$ (i.e., adding $[\valpha,\vbeta]$ to the set
  $\mathcal{I}$ that defines the current MISC estimator) amounts to
  evaluating $F^\valpha(\yy)$ in the set of ``new'' points added to the estimator
  by $\vec{\Delta}^{\textnormal{stoc}}[\cdot]$, i.e.,
  $\bigtimes_{j : \beta_j > 1} \left\{ \quadpointset^{m(\beta_j)} \setminus \quadpointset^{m(\beta_j-1)} \right\}$,
  whose cardinality is $\prod_{j \geq 1} ( m(\beta_j) - m(\beta_j-1))$.
  The proof is then concluded by observing that the definition of
  $m(\beta)$ in \eqref{eq:m_CC} immediately gives
  $m(\beta_j) - m(\beta_j-1) \leq 2^{\beta_j-1}$ and recalling Assumption~\ref{assump:work}.
\end{proof}
\begin{remark}
  We observe that the exponent $\vbeta
  -1$ guarantees that the directions along which no quadrature is
  actually performed (i.e., $\beta_j=1$ for any $j\geq
  1$) do not contribute to the total work.
\end{remark}

Next, we prove a sequence of lemmas that allow us to conclude
that an analogous estimate holds for the error contribution %
as well, i.e., that $\Delta E_{\valpha,\vbeta}$ can be bounded as a product of a term related
to the spatial discretization and a term related to the approximation over the parameter
space.
To this end, we need to introduce the quantity
\[
\Leb_{m(\beta)} =
\sup_{f \in C^0([-1,1]), \|f\|_{L^\infty(-1,1)} = 1}
\left| Q^{m(\beta)}[f] -  Q^{m(\beta-1)}[f] \right| \quad \forall \beta \in \nset_+,
\]
where $Q^{m(\beta)}[\cdot]$ are the univariate quadrature operators
introduced in \eqref{eq:unvariate_quad_def}, and observe that
$\Leb_{1}=1$. %
Next, let $\maxLebD = \max_{\beta \geq 1} \textnormal{Leb}_{m(\beta)}$,
and note that $ \maxLebD\leq 2$ since $Q^{m(\beta)}$ has positive weights. Moreover,
a much smaller bound on $\maxLebD$ can be obtained for Clenshaw--Curtis points.
Indeed, since Clenshaw--Curtis points are nested, we can also bound
$\Leb_{m(\beta)} \leq \LebD_{m(\beta)}$ with
\[
\LebD_{m(\beta)} = \sum_{y^j_\beta \in \quadpointset^{m(\beta)} \cap \quadpointset^{m(\beta-1)}}  \left| \varpi_\beta^j - \varpi_{\beta-1}^j \right|
      + \sum_{y_j \in \quadpointset^{m(\beta)} \setminus \quadpointset^{m(\beta-1)}} \left| \varpi_{\beta}^j  \right|,
\]
and it can be verified numerically that $\LebD_{m(\beta)}$ is bounded,
attains its maximum for $\beta=3$ and converges to 1 for
$\beta \to \infty$, see Figure \ref{fig:lebtilde_cc}.
Therefore, we have $\maxLebD \leq \widetilde{\textnormal{Leb}}_{m(3)} \approx 1.067$.

\begin{figure}[htbp]
  \centering
  \includegraphics[width=0.5\textwidth]{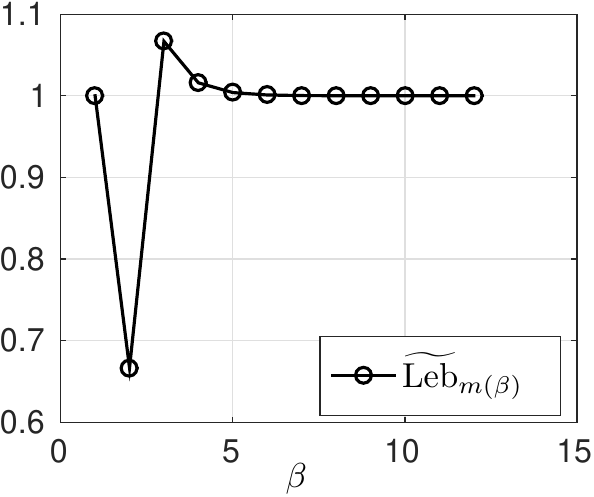}
  \caption{Numerical evaluation of $\LebD_{m(\beta)}$ for Clenshaw-Curtis points.}
  \label{fig:lebtilde_cc}
\end{figure}

\begin{lemma}[Stochastic error contribution]\label{lemma:deltaE_stoc_decay}
  Let $f:\Gamma \to \rset$ and $\vbeta \in \finitesuppset_+$, and
  assume that the quadrature operator, $\mathscr{Q}^{m(\vbeta)}$, is
  built with Clenshaw-Curtis abscissae.  If there exists a sequence,
  $\vec{\rho}=\{\rho_j\}_{j \geq 1}$, $\rho_j > 1$ for all $j$, such
  that
  \begin{enumerate}
    \item $\sum_{j \geq 1} \frac{1}{\rho_j} < \infty$,
  	\item there exists $0 < C_f < \infty$ such that for any finite set,
      $\finiteset' \subset \nset_+$ with $\# \finiteset' < \infty$,
      the restriction of $f$ on $([-1,1])^{\finiteset'}$ admits
      a holomorphic complex extension in a Bernstein polyellipse,
      $\mcE_{\vec{\rho}}^{\finiteset'}$ with
      $\sup_{\zz \in \mcE_{\vec{\rho}}^{\finiteset'}} |f(\vec z)| \leq C_f$.
  \end{enumerate}
  Then, the set
  \[
  \mathcal J = \left\{ j \geq 1 : \rho_j \leq 2 (\maxLebD)^{\frac{1}{3}}\right\},
  \]
  has finite cardinality, i.e., $\#\mathcal J < \infty$ and
  \[%
  \left| \vec  \Delta^{\textnormal{stoc}}[\mathscr{Q}^{m(\vbeta)}f] \right|
  \leq C_{\textrm{SE}}({\vec \rho}) \sup_{\zz \in
    \mcE_{\vec{\rho}}^\finiteset} |f(\zz)| e^{-\sum_{j \geq 1} g_jm(\beta_j-1)}
  \leq C_{\textrm{SE}}({\vec \rho}) C_f e^{-\sum_{j \geq 1} g(\rho_j) m(\beta_j-1)},
  \]%
  holds, where $\finiteset$ is the support of $\vbeta-\oone$,
\[
  0 < g(\rho_j) = \begin{cases}
    \log \rho_j, & \text{for } j \in \mathcal J, \\
    \log \rho_j - \log 2 - \frac{1}{3} \log \left( \maxLebD \right), &
    \text{otherwise},
\end{cases}
\]
and $C_{\textrm{SE}}({\vec \rho})
  < \infty
  $
  is independent of $\vbeta$.
\end{lemma}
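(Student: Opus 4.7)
I would first dispatch the finiteness of $\mathcal{J}$: since $\sum_{j\ge 1}\rho_j^{-1}<\infty$ forces $\rho_j\to\infty$, only finitely many indices can satisfy $\rho_j\le 2\maxLebD^{1/3}$. The quantitative bound is then obtained by decomposing $\vec\Delta^{\textnormal{stoc}}[\mathscr{Q}^{m(\vbeta)}f]$ as a tensor product of univariate operators, expanding $f$ in its Chebyshev series using Lemma~\ref{lemma:cheb_conv}, and summing the resulting geometric series separately in each coordinate.

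Concretely, from \eqref{eq:delta_stoc_def} and the fact that $\finiteset:=\mathrm{supp}(\vbeta-\oone)$ is finite, one has
\[
\vec{\Delta}^{\textnormal{stoc}}[\mathscr{Q}^{m(\vbeta)}f]=\Bigl(\bigotimes_{j\in\finiteset}(Q^{m(\beta_j)}-Q^{m(\beta_j-1)})\otimes\bigotimes_{j\notin\finiteset}Q^{m(1)}\Bigr)[f].
\]
Restricting $f$ to any finite-dimensional section containing $\finiteset$, Lemma~\ref{lemma:cheb_conv} yields a convergent Chebyshev expansion $f=\sum_\pp f_\pp\Phi_\pp$ with $|f_\pp|\le C_f\,2^{|\pp|_0}\prod_j\rho_j^{-p_j}$, and by linearity the tensor operator passes through the series. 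Each univariate factor is either $(Q^{m(\beta_j)}-Q^{m(\beta_j-1)})[\phi_{p_j}]$---which vanishes for $p_j<m(\beta_j-1)$ by polynomial exactness of Clenshaw--Curtis and is $\le\maxLebD$ otherwise---or $\phi_{p_j}(0)$, bounded by $1$. Summing the resulting geometric series in $p_j$ coordinate by coordinate leads to
\[
\bigl|\vec\Delta^{\textnormal{stoc}}[\mathscr{Q}^{m(\vbeta)}f]\bigr|\le C_f\prod_{j\in\finiteset}\frac{2\maxLebD\,\rho_j^{-m(\beta_j-1)}}{1-\rho_j^{-1}}\prod_{j\notin\finiteset}\frac{1+\rho_j^{-1}}{1-\rho_j^{-1}},
\]
the infinite product over $j\notin\finiteset$ being finite by $\sum_j\rho_j^{-1}<\infty$ and hence absorbable into $C_{\textrm{SE}}(\vec\rho)$.

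To finally match the exponential form $e^{-\sum_j g(\rho_j)m(\beta_j-1)}$, I would split the finite product over $\finiteset$ according to $\mathcal{J}$. For $j\in\finiteset\cap\mathcal{J}$ the factors $2\maxLebD/(1-\rho_j^{-1})$ are collected into $C_{\textrm{SE}}(\vec\rho)$---a finite contribution since $\#\mathcal{J}<\infty$---leaving $\rho_j^{-m(\beta_j-1)}=e^{-m(\beta_j-1)\log\rho_j}$, which is exactly the first branch of $g$. For $j\in\finiteset\setminus\mathcal{J}$, the defining inequality $\rho_j>2\maxLebD^{1/3}$ together with $m(\beta_j-1)\ge 1$ lets me write $2\maxLebD\,\rho_j^{-m(\beta_j-1)}\le R_j\cdot(2\maxLebD^{1/3})^{m(\beta_j-1)}\rho_j^{-m(\beta_j-1)}=R_j\cdot e^{-m(\beta_j-1)(\log\rho_j-\log 2-\tfrac{1}{3}\log\maxLebD)}$, the $1/3$ exponent being precisely what makes the comparison $R_j\le 1$ valid as soon as $m(\beta_j-1)\ge 3$. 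The delicate point---and the main technical obstacle I foresee---is the uniform control of the residual factors $R_j$ and $(1-\rho_j^{-1})^{-1}$: in the degenerate regime $m(\beta_j-1)=1$ (i.e.\ $\beta_j=2$) the pointwise comparison is saturated up to a multiplicative factor of order $\maxLebD^{2/3}$, and one must verify that the cumulative such residuals combine with the Weierstrass-type product $\prod_{j\ge 1}(1-\rho_j^{-1})^{-1}$ into a constant depending only on $\vec\rho$ and not on $\vbeta$, thereby yielding the finite $C_{\textrm{SE}}(\vec\rho)$ claimed.
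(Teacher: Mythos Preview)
Your strategy---Chebyshev expansion via Lemma~\ref{lemma:cheb_conv}, tensorization, vanishing by polynomial exactness, coordinatewise geometric summation---is exactly the paper's. You have also correctly located the one genuine obstacle: the case $\beta_j=2$ (i.e.\ $m(\beta_j-1)=1$) for $j\notin\mathcal{J}$, where your residual factor $R_j=\maxLebD^{2/3}$ exceeds $1$. This is not a technicality that can be absorbed: since $\vbeta$ may have arbitrarily many coordinates equal to $2$ outside $\mathcal{J}$, the product $\prod_{\{j\notin\mathcal{J}:\beta_j=2\}}\maxLebD^{2/3}$ genuinely depends on $\vbeta$ and is unbounded over $\finitesuppset_+$. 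The Weierstrass product $\prod_j(1-\rho_j^{-1})^{-1}$ is harmless (it converges by hypothesis~1), but it cannot compensate for a factor bounded below by a fixed constant $>1$ in arbitrarily many active directions. As written, your $C_{\mathrm{SE}}(\vec\rho)$ is therefore not $\vbeta$-independent.

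The paper closes this gap with a Clenshaw--Curtis-specific observation you have not used: the one-dimensional constant satisfies $\LebD_{m(\beta)}\le 1$ for $\beta\in\{1,2\}$ (the case $\beta=2$ is Simpson's rule, checked directly). Hence $\maxLebD$ only enters for $\beta_j\ge 3$, and precisely in that regime $m(\beta_j-1)\ge m(2)=3$, so the absorption $\maxLebD\le\maxLebD^{m(\beta_j-1)/3}$ is valid wherever it is needed---this is the origin of the $\tfrac{1}{3}$ in $g$. Replacing your blanket bound $\le\maxLebD$ by $\le\LebD_{m(\beta_j)}$ and then splitting $\beta_j\le 2$ versus $\beta_j\ge 3$ removes the problematic case entirely, after which your argument goes through. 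A minor simplification: there is no need to embed $\finiteset$ into a larger section and carry the tail product $\prod_{j\notin\finiteset}\frac{1+\rho_j^{-1}}{1-\rho_j^{-1}}$; restricting exactly to $\finiteset=\mathrm{supp}(\vbeta-\oone)$ gives a Chebyshev expansion over those variables only, and the final constant is simply $(2\maxLebD)^{\#\mathcal{J}}\prod_{j\ge 1}(1-e^{-g(\rho_j)})^{-1}$, finite since $\sum_j e^{-g(\rho_j)}\lesssim\sum_j\rho_j^{-1}<\infty$.
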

\begin{proof}
  Let $\finiteset$ be the support of $\vbeta-\oone$ with cardinality
  $\#\finiteset  < \infty$, $\kk \in \nset_+^\finiteset$, and let
  $\Phi_{\finiteset,\kk}$ denote the Chebyshev polynomials of the
  first kind with degree $k_j$ along $y_{j}$ for $j \geq 1$.
  We observe that
  $\Phi_{\finiteset,\kk}$ are equivalent to the $\#\finiteset$-variate
  Chebyshev polynomials over $[-1,1]^{\#\finiteset}$ thanks to the
  product structure of the multivariate Chebyshev polynomials and to
  the fact that $\phi_0(y)=1$. Next, consider the holomorphic
  extension of $f:\mathbb{C}^\finiteset \to \mathbb{C}$, and its
  Chebyshev expansion over $\Phi_{\finiteset,\kk}$ introduced in
  Lemma~\ref{lemma:cheb_conv}, where
  \begin{align*}%
    |\vec \Delta^{\textnormal{stoc}}[ \mathscr{Q}^{m(\vbeta)} f ]|
    = \Big| \vec \Delta^{\textnormal{stoc}} \Big[ \mathscr{Q}^{m(\vbeta)} \Big[\sum_{\kk \in \nset_+^\finiteset} f_\kk\Phi_{\finiteset,\kk} \Big] \Big] \Big|
    = \Big| \sum_{\kk \in \nset_+^\finiteset} f_\kk \vec \Delta^{\textnormal{stoc}} \Big[\mathscr{Q}^{m(\vbeta)}\big[\Phi_{\finiteset,\kk} \big] \Big]  \Big|
  \end{align*}
  holds.  By construction of hierarchical surplus, we have
  $\vec \Delta^{\textnormal{stoc}}[
  \mathscr{Q}^{m(\vbeta)}[\Phi_{\finiteset,\kk }]] = 0$ for all
  Chebyshev polynomials, $\Phi_{\finiteset,\kk}$, such that
  $\exists \,j \in \finiteset : k_j < m(\beta_j -1)$ (i.e., for
  polynomials that are integrated exactly at least in one direction by
  both quadrature operators along that direction). Therefore, the
  previous sum reduces to the multi-index set
  $\kk \geq m(\vbeta - \oone)$. Furthermore, by the triangular inequality,
  we have
  \[
    |\vec \Delta^{\textnormal{stoc}}[ \mathscr{Q}^{m(\vbeta)} f ]|
    \leq \sum_{\kk \geq m(\vbeta-\oone)} | f_\kk |
    \left| \vec \Delta^{\textnormal{stoc}} \Big[\mathscr{Q}^{m(\vbeta)}\big[\Phi_{\finiteset,\kk} \big] \Big] \right|.
  \]
  Next, using the definitions of $\vec \Delta^{\textnormal{stoc}}$ and
  $\Leb_{m(\beta)}$ and recalling that Chebyshev polynomials of the first kind
  on $[-1,1]$ are bounded by $1$, that $\Leb_{m(\beta)} \leq \LebD_{m(\beta)} \leq 1$ for
  $\beta=1,2$ and $\Leb_{m(\beta)}\leq\maxLebD$ for $\beta \geq 3$, we have %
  \begin{equation}
    \label{eq:lebtilde_bound}
    \aligned
    \Big|
    	\vec \Delta^{\textnormal{stoc}} \Big[
    		\mathscr{Q}^{m(\vbeta)}\big[
    			\Phi_{\finiteset,\kk}\big]
    	\Big]
    \Big|
    & = \left| \bigotimes_{j \in \finiteset} \Delta^{\textnormal{stoc}}_j[ Q^{m(\beta_j)}[\phi_{k_j}]] \right|  \\
    & \leq \prod_{j \in \finiteset}  \LebD_{m(\beta_j)}
      \norm{\phi_{k_j}}{L^\infty([-1,1])}   \leq  \prod_{\beta_j \geq 3} \maxLebD.
      \endaligned
  \end{equation}
  We then bound $| f_\kk |$ by Lemma~\ref{lemma:cheb_conv} to obtain
  \begin{align*}
    |\vec \Delta^{\textnormal{stoc}}[ \mathscr{Q}^{m(\vbeta)} f ]|
     & \leq \sup_{\zz \in \mcE_{\vec{\rho}}^\finiteset} |f(\zz)| \Bigg( \prod_{\beta_j \geq 3} \maxLebD \Bigg)
       \sum_{\kk \geq m(\vbeta - \oone)} \prod_{j \in \finiteset} 2^{|k_j|_0}
       \rho_j^{-k_j}  \\
     & \leq \sup_{\zz \in \mcE_{\vec{\rho}}^\finiteset} |f(\zz)| \Bigg( \prod_{\beta_j \geq 3} \maxLebD \Bigg)
       \left(\prod_{j \in \finiteset} \sum_{k_j \geq m(\beta_j - 1)}
       2^{|k_j|_0} \rho_j^{-k_j} \right)\\
     & = \sup_{\zz \in \mcE_{\vec{\rho}}^\finiteset} |f(\zz)|
       \Bigg( \prod_{\substack{j \in \mathcal J \\ \beta_j \geq 3}}
       \maxLebD \Bigg)
    \Bigg( \prod_{\substack{j \notin \mathcal J \\ \beta_j \geq 3}} \maxLebD \Bigg)
    \\
     &\hskip 1cm  \left(\prod_{j \in \finiteset \cap \mathcal J } \sum_{k_j \geq m(\beta_j - 1)}
       2^{|k_j|_0} \rho_j^{-k_j} \right)
       \left(\prod_{j \in \finiteset \setminus \mathcal J} \sum_{k_j \geq m(\beta_j - 1)}
       2^{|k_j|_0} \rho_j^{-k_j} \right).
  \end{align*}
Next, we observe that $|k_j|_0 \leq \min\{1, k_j\}$ for $k_j \geq 0$ and $1 \leq \frac{1}{3}m(\beta_j - 1)$ for all $\beta_j \geq 3$. Then
\begin{align*}
  |\vec \Delta^{\textnormal{stoc}}[ \mathscr{Q}^{m(\vbeta)} f ]| &
  \leq \sup_{\zz \in \mcE_{\vec{\rho}}^\finiteset} |f(\zz)| \maxLebD^{\# \mathcal J}
  \Bigg( \prod_{j \notin \mathcal J} \maxLebD^{\frac{1}{3}m(\beta_j-1)} \Bigg)
  \\
  &\hskip 1cm 2^{\# \mathcal J} \left(\prod_{j \in \finiteset \cap
      \mathcal J } \sum_{k_j \geq m(\beta_j - 1)} \rho_j^{-k_j}
  \right) \left(\prod_{j \in \finiteset \setminus \mathcal J} \sum_{k_j \geq
      m(\beta_j - 1)}
    2^{k_j} \rho_j^{-k_j} \right)\\
  & \leq \left(2 \maxLebD \right)^{\# \mathcal J} \sup_{\zz \in
    \mcE_{\vec{\rho}}^\finiteset} |f(\zz)|
  \left(\prod_{j \in \finiteset} \sum_{k_j \geq m(\beta_j - 1)}
    e^{-g(\rho_j) k_j} \right)
  \\
  & = \left(2 \maxLebD \right)^{\# \mathcal J} \sup_{\zz \in
    \mcE_{\vec{\rho}}^\finiteset} |f(\zz)|
  \left(\prod_{j \in \finiteset} \frac{1}{1-e^{- g(\rho_j)}}\right)
  \left(\prod_{j \in \finiteset}
    e^{-g(\rho_j) m(\beta_j-1)}\right) \\
  &\leq C_{\textrm{SE}}({\vec \rho})\sup_{\zz \in
    \mcE_{\vec{\rho}}^\finiteset}  |f(\zz)| \prod_{j \geq 1} e^{-g(\rho_j)
    m(\beta_j-1)},
\end{align*}
where the last inequality is due to the fact that $m(\beta_j - 1) = 0$ whenever $j \notin \finiteset$ or
equivalently $\beta_j=1$ and
\[
  \prod_{j \in \finiteset} \frac{1}{1-e^{- g(\rho_j)}} \leq
  \prod_{j > 1} \frac{1}{1-e^{- g(\rho_j)}},
\]
since $g(\rho_j) > 0$ for all $j\geq 1$. Note that $C_{\textrm{SE}}({\vec \rho})$ is
independent of $\vbeta$ and is bounded since %
\[ \aligned
  \prod_{j \geq 1} \frac{1}{1-e^{-{g}(\rho_j)}} < \infty
  &\Longleftrightarrow -\sum_{j \geq 1} \log(1-e^{-{g}(\rho_j)}) < \infty
  \Longleftrightarrow \sum_{j \geq 1} e^{-{g}(\rho_j)} < \infty \\
  &\Longleftrightarrow \sum_{j \in \mathcal J} \frac{1}{\rho_j} + \sum_{j
    \notin \mathcal J} \frac{2\left(\maxLebD\right)^{\frac{1}{3}}}{\rho_j} <
  \infty,
\endaligned
\]
which was assumed. Moreover, to show that
$\# \mathcal J < \infty$, note that
$\sum_{j \in \mathcal J} \rho_j^{-1}$ is otherwise unbounded, which
contradicts the first assumption of the theorem, namely
$\sum_{j \geq 1} \rho_j^{-1} < \infty$.
\end{proof}
\begin{remark}
  Sharper estimates could be obtained by exploiting the structure of
  the Chebyshev polynomials when bounding
  $\left|{\Delta^{\mathrm{stoc}}[Q^{m(\beta_j)}[\phi_{k_j}]]}\right|$ in
  \eqref{eq:lebtilde_bound} (for instance, the fact that
  $Q^{m(\beta_j)}[\phi_{k_j}]=0$ whenever $k_j$ is odd and larger than
  1) rather than using the general bound
  $\Delta^{\mathrm{stoc}}[ Q^{m(\beta_j)}[\phi_{k_j}]] \leq
  \LebD_{m(\beta_j)} \norm{\phi_{k_j}}{L^\infty([-1,1])}$.
\end{remark}

We are now almost in the position to prove the estimate on the error contribution %
(see Lemma~\ref{lemma:error}); before doing this,
we need another auxiliary lemma that gives conditions
for the summability of certain sequences that will be considered in the proof of Lemma~\ref{lemma:error}
as well as in the proof of the main theorem on the convergence of MISC.

\begin{lemma}[Summability of stochastic rates]\label{lemma:gn_summ}
  Recall the definitions of $\ellrad_{0,j}$ in
  Lemma~\ref{lemma:polyell_analyticity}, of $\ellrad_{s,j}$ in
  Lemma~\ref{lemma:polyell_analyticity_high-order} and of $g(\cdot)$ in
  Lemma~\ref{lemma:deltaE_stoc_decay}.  Under
  Assumption~\ref{assump:b_and_p}, for all $s=0, 1, \ldots, s_{\max}$,
  the sequences $\{e^{-g(\ellrad_{s,j})}\}_{j \in \nset_+}$ and
  $\left\{\frac{1}{\ellrad_{s,j}}\right\}_{j \in \nset_+}$ are
  $\ell^{p}$-summable for $p \geq \tilde{p}_s = \frac{p_s}{1-p_s}$, with
  $\tilde{p}_s < 1$.
\end{lemma}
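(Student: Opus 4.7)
The plan is to reduce both summability statements to the $p_s$-summability of $\bb_s$ by analyzing the asymptotic behavior of $\ellrad_{s,j}$ as $j\to\infty$, and by observing that the two branches of $g$ differ by only an additive constant.

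First, $\tilde p_s < 1$ is immediate: since $p_s < 1/2$ by Assumption~\ref{assump:b_and_p}, one has $\tilde p_s = p_s/(1-p_s) < (1/2)/(1/2) = 1$.

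Next, I control $\ellrad_{s,j}$ asymptotically. Because $\bb_s\in\ell^{p_s}$, we have $b_{s,j}\to 0$, and since $p_s-1<0$ this forces $b_{s,j}^{p_s-1}\to\infty$, hence $\tau_{s,j}\to\infty$. The elementary inequality $\tau\le\tau+\sqrt{\tau^2+1}\le 2\tau+1$ for $\tau>0$ shows $\ellrad_{s,j}\asymp\tau_{s,j}$, so there exist constants $0<c_1\le c_2$ (depending on $s$, $\delta$ and $\|\bb_s\|_{\ell^{p_s}}$) such that for all sufficiently large $j$,
\[
c_1\, b_{s,j}^{\,1-p_s}\le \frac{1}{\ellrad_{s,j}}\le c_2\, b_{s,j}^{\,1-p_s}.
\]
Consequently, for any $p>0$,
\[
\sum_{j\geq 1}\frac{1}{\ellrad_{s,j}^{\,p}}\le C+C'\sum_{j\geq 1} b_{s,j}^{(1-p_s)p},
\]
where the first term collects finitely many initial indices. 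The right-hand side is finite as soon as $(1-p_s)p\ge p_s$, i.e.\ $p\ge\tilde p_s$, which gives the desired $\ell^p$-summability of $\{1/\ellrad_{s,j}\}_{j}$.

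For $\{e^{-g(\ellrad_{s,j})}\}_{j}$, recall
\[
g(\ellrad_{s,j})=\log\ellrad_{s,j}-\log 2-\tfrac13\log\maxLebD\quad\text{if }\ellrad_{s,j}>2(\maxLebD)^{1/3},
\]
while $g(\ellrad_{s,j})=\log\ellrad_{s,j}$ otherwise. Since $\ellrad_{s,j}\to\infty$, only finitely many indices fall in the second (small-$\ellrad$) regime, and on the dominant regime
\[
e^{-g(\ellrad_{s,j})}=\frac{2(\maxLebD)^{1/3}}{\ellrad_{s,j}}.
\]
Thus $\{e^{-g(\ellrad_{s,j})}\}_{j}$ differs from $\{1/\ellrad_{s,j}\}_{j}$ by a bounded multiplicative factor outside a finite index set, and a finite additive correction on that set. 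Its $\ell^p$-summability for $p\ge\tilde p_s$ therefore reduces to the case already proved. The only mildly delicate point, and the one I expect to be the main bookkeeping hurdle, is making the passage $\ellrad_{s,j}\asymp\tau_{s,j}\asymp b_{s,j}^{\,p_s-1}$ uniform in $j$ with explicit constants so that the exceptional finite set can be split off cleanly; everything else is a direct invocation of Assumption~\ref{assump:b_and_p}.
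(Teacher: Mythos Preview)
Your argument is correct and follows essentially the same route as the paper: reduce both sequences to the $p_s$-summability of $\bb_s$ via $\ellrad_{s,j}\sim\tau_{s,j}\sim b_{s,j}^{\,p_s-1}$, obtaining the threshold $(1-p_s)p\ge p_s$. The one place where the paper is slightly cleaner is exactly the ``bookkeeping hurdle'' you flag: no asymptotic comparison or finite exceptional set is needed, because the bound $\ellrad_{s,j}=\tau_{s,j}+\sqrt{\tau_{s,j}^2+1}\ge 2\tau_{s,j}$ holds for every $j$, and $\tau_{s,j}^{-1}$ is an \emph{exact} constant multiple of $b_{s,j}^{\,1-p_s}$; likewise $e^{-g(\ellrad_{s,j})}\le 2(\maxLebD)^{1/3}\ellrad_{s,j}^{-1}$ holds uniformly in $j$ regardless of which branch of $g$ applies, so the reduction to $\{1/\ellrad_{s,j}\}$ is a one-line global estimate rather than a split into regimes.
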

\begin{proof}
  First note that, by definition of $g(\cdot)$, we have
  \[ \sum_{j \geq 1} e^{-p g(\ellrad_{s,j})} \leq
    2^{p} \left(\maxLebD\right)^{3p} \sum_{j \geq 1} \ellrad_{s,j}^{-p}.\]
  Then, from \eqref{eq:ellrad_dep_on_b_s}--\eqref{eq:taun_def_H1_norm_s}, or
  \eqref{eq:ellrad_dep_on_b}--\eqref{eq:taun_def_H1_norm} for $s=0$, we can bound
  $ 2\tau_{s,j} \leq \ellrad_{s,j}$ and obtain
  \[
    \sum_{j \geq 1} \ellrad_{s,j}^{-p} \leq 2^{-p} \sum_{j \geq 1}
    \tau_{s,j}^{-p} = 2^{-p}\left( \frac{\pi}{E_\delta
        \norm{\bb_s}{\ell^{p_s}}^{p_s}} \right)^{-p} \sum_{j \geq 1}
    b_{s,j}^{(1-p_s)p}.
  \]
  From Assumption~\ref{assump:b_and_p}, we know that
  $\bb_s \in \ell^{p_s}$ for $p_s \leq \frac{1}{2}$, therefore we have the condition
  \[
  (1-p_s)p \geq p_s
  \Rightarrow
  p \geq \frac{p_s}{1-p_s} < 1.
  \]
\end{proof}

\begin{lemma}[Total error contribution]\label{lemma:error}
  Assume that the deterministic problem is solved with a method obtained by
  tensorizing piecewise multi-linear finite element spaces on each mesh,
  $\{\mesh_i\}_{i=1}^D$, discretizing each
  $\{\mathscr{B}_i\}_{i=1}^D$, and let $h_i$ as in \eqref{eq:choice_of_h_and_FEM} %
  be the mesh size of each $\{\mesh_i\}_{i=1}^D$.
  Then, under Assumptions \ref{assump:shift} and \ref{assump:b_and_p},
  the error contribution, $\Delta E_{\valpha,\vbeta}$, of each difference
  operator, $\vec \Delta[\descqoi{\valpha}{\vbeta}],$ can be
  decomposed as
  \begin{equation}\label{eq:deltaE_bound_with_min}
    \Delta E_{\valpha,\vbeta} \leq \min_{s=0,1,\ldots,s_{\max}} C_s \Delta E_{\valpha}^{\textnormal{det}}(s)\Delta E_{\vbeta}^{\textnormal{stoc}}(s),
  \end{equation}
  for a constant $C_s < \infty$ independent of $\valpha$ and $\vbeta$ and
  \begin{align}%
    \Delta E_{\valpha}^{\textnormal{det}}(s) & = 2^{-\valpha\cdot \vec{r}_{\mathrm{FEM}}(s\qq)} \label{eq:deltaE_factors_alpha_s},\\
    \Delta E_{\vbeta}^{\textnormal{stoc}}(s) & = e^{-\sum_{j\geq 1} m(\beta_j-1)g_{s,j}} \label{eq:deltaE_factors_beta_s},
  \end{align}
  with $g_{s,j} = g(\ellrad_{s,j})$ as in Lemma~\ref{lemma:deltaE_stoc_decay} and
  $r_{\mathrm{FEM}}(s\qq)_i=\min\left\{1,q_is\right\}$
  for $i=1,\ldots,D$ with $\qq \in \rset_+^D$ s.t. $|\qq|=1$.%
\end{lemma}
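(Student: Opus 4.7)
The plan is to commute the two mixed-difference operators, writing
\[
\Delta E_{\valpha,\vbeta}=\Bigl|\vec\Delta^{\textnormal{stoc}}\bigl[\mathscr{Q}^{m(\vbeta)}[g^\valpha]\bigr]\Bigr|,\qquad g^\valpha(\yy):=\vec\Delta^{\textnormal{det}}[\qoi^\valpha(\yy)]=\Theta\bigl(\vec\Delta^{\textnormal{det}}[u^\valpha(\cdot,\yy)]\bigr),
\]
where the last equality uses linearity of $\Theta$. I would then apply Lemma~\ref{lemma:deltaE_stoc_decay} to the integrand $g^\valpha$, using for each admissible $s\in\{0,1,\ldots,s_{\max}\}$ the Bernstein polyellipse from Lemma~\ref{lemma:polyell_analyticity_high-order} (or Lemma~\ref{lemma:polyell_analyticity} for $s=0$). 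The summability hypothesis $\sum_{j\geq 1}\ellrad_{s,j}^{-1}<\infty$ required by Lemma~\ref{lemma:deltaE_stoc_decay} is supplied by Lemma~\ref{lemma:gn_summ}. This yields, with $\finiteset'$ the support of $\vbeta-\oone$,
\[
\Delta E_{\valpha,\vbeta}\leq C_{\textrm{SE}}(\vec\ellrad_s)\Bigl(\sup_{\zz\in\mcE_{\vec\ellrad_s}^{\finiteset'}}|g^\valpha(\zz)|\Bigr)\Delta E_{\vbeta}^{\textnormal{stoc}}(s),
\]
which already produces the claimed stochastic factor $\Delta E_{\vbeta}^{\textnormal{stoc}}(s)$.

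For the deterministic supremum, boundedness of $\Theta$ on $V=H_0^1(\mathscr{B})$ reduces the task to bounding $\norm{\vec\Delta^{\textnormal{det}}[u^\valpha(\cdot,\zz)]}{V}$ uniformly in $\zz\in\mcE_{\vec\ellrad_s}^{\finiteset'}$. Lemma~\ref{lemma:polyell_analyticity_high-order} gives the uniform bound $\norm{u(\cdot,\zz)}{H^{1+s}(\mathscr{B},\mathbb{C})}\leq C_{s,u}$, and the inclusion \eqref{eq:mixed_to_standard_H_inclusions}, applied with $r=s$ and any $\qq\in\rset_+^D$ with $|\qq|=1$, transfers this control to the mixed fractional space $\mathcal{H}^{\oone+s\qq}(\mathscr{B},\mathbb{C})$. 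A standard tensor-product error estimate for the piecewise multi-linear FE mixed difference $\vec\Delta^{\textnormal{det}}$ on a function in $\mathcal{H}^{\oone+s\qq}$ then produces the factorized $V$-norm bound $C\prod_{i=1}^D h_i^{\min(1,q_is)}$; with $h_i=h_{0,i}2^{-\alpha_i}$ as in \eqref{eq:choice_of_h_and_FEM}, this is exactly $C\,2^{-\valpha\cdot\vec{r}_{\mathrm{FEM}}(s\qq)}=C\,\Delta E_{\valpha}^{\textnormal{det}}(s)$, the ceiling at $1$ in each direction reflecting the linear-degree limit of the element. Absorbing the functional norm of $\Theta$, the factor $C_{\textrm{SE}}(\vec\ellrad_s)$, the constant $C_{s,u}$, and the FE interpolation constant into a single $s$-dependent constant $C_s$, independent of $\valpha$ and $\vbeta$, establishes \eqref{eq:deltaE_bound_with_min} for that particular $s$; minimizing over $s\in\{0,1,\ldots,s_{\max}\}$ closes the argument.

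The main obstacle is the last step, namely deriving the factorized per-direction rate $\prod_i h_i^{\min(1,q_is)}$ for the tensorized FE mixed difference starting from control only in the isotropic Sobolev space $H^{1+s}$. This requires the bridge \eqref{eq:mixed_to_standard_H_inclusions} to mixed regularity followed by a tensor-product nodal/quasi-interpolant estimate on the product mesh $\bigotimes_i\mesh_i$, arranged so the constants are independent of $\valpha$, $\vbeta$, and of $\zz$ inside the polyellipse. Once this FE estimate is in place, the remaining ingredients---commutation of $\vec\Delta^{\textnormal{det}}$ with $\vec\Delta^{\textnormal{stoc}}$, the uniform holomorphic bound from Lemma~\ref{lemma:polyell_analyticity_high-order}, and the application of Lemma~\ref{lemma:deltaE_stoc_decay}---assemble the result routinely.
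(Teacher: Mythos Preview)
Your proposal is correct and follows essentially the same approach as the paper: rewrite $\Delta E_{\valpha,\vbeta}$ as $\bigl|\vec\Delta^{\textnormal{stoc}}[\mathscr{Q}^{m(\vbeta)}\Theta(\vec\Delta^{\textnormal{det}}[u^\valpha])]\bigr|$, apply Lemma~\ref{lemma:deltaE_stoc_decay} in the polyellipse $\mcE_{\vec\ellrad_s}^{\finiteset}$ from Lemmas~\ref{lemma:polyell_analyticity}--\ref{lemma:polyell_analyticity_high-order} (with summability from Lemma~\ref{lemma:gn_summ}), then bound the deterministic supremum via boundedness of $\Theta$, the inclusion \eqref{eq:mixed_to_standard_H_inclusions}, and a combination-technique estimate for $\norm{\vec\Delta^{\textnormal{det}}[u^\valpha]}{H^1}$ in terms of $\norm{u}{\mathcal{H}^{\oone+s\qq}}$. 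The paper handles what you call the ``main obstacle'' by invoking \cite[proof of Theorem~2]{griebel.harbrecht:tensor} for the tensor-FE mixed-difference bound, and it makes explicit one point you leave implicit, namely that each $u^\valpha$ inherits holomorphy in the same region as $u$ (so that $g^\valpha$ indeed satisfies the hypotheses of Lemma~\ref{lemma:deltaE_stoc_decay}).
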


\begin{proof}
 Combining the definition of $\vec \Delta[\descqoi{\valpha}{\vbeta}]$, cf. \eqref{eq:deltaF_def},
 and the definition of $\Delta^{\textnormal{det}}[\descqoi{\valpha}{\vbeta}]$, cf. \eqref{eq:delta_det_def},
 we have
 \begin{align*}
   \Delta E_{\valpha,\vbeta} = | \vec \Delta[\descqoi{\valpha}{\vbeta}] |
 & = \vec \Delta^\textnormal{stoc}[ \vec \Delta^\textnormal{det}[\descqoi{\valpha}{\vbeta}]]
   = \vec \Delta^\textnormal{stoc}[ \sum_{\jj \in \{0,1\}^D} (-1)^{|\jj|} \descqoi{\valpha-\jj}{\vbeta} ] \\
 & = \vec \Delta^\textnormal{stoc}[ \sum_{\jj \in \{0,1\}^D} (-1)^{|\jj|} \mathscr{Q}^{m(\vbeta)}[\Theta[u^{\valpha-\jj}(\cdot,\yy)]] \\
 & = \vec \Delta^\textnormal{stoc}[ \mathscr{Q}^{m(\vbeta)}\Theta[ \sum_{\jj \in \{0,1\}^D} (-1)^{|\jj|} u^{\valpha-\jj}(\cdot,\yy)]] \\
 & = \vec \Delta^\textnormal{stoc}[ \mathscr{Q}^{m(\vbeta)}\Theta[ \vec{\Delta}^\textnormal{det}[ u^{\valpha}(\cdot,\yy)]]].
 \end{align*}
 We observe that $f(\yy) = \Theta[ \vec{\Delta}^\textnormal{det}[u^{\valpha}(\cdot,\yy)]]$
 is a linear combination of some $u^\valpha$
 and that each of these $u^\valpha$ is
 an $H^1_0(\mathscr{B},\mathbb{C})$-holomorphic function, since the
 finite-element approximations of $u$
 are holomorphic in the same complex region as $u$ itself; hence,
 $f(\yy)$ is also holomorphic.
 Then, thanks to the summability properties in Lemma \ref{lemma:gn_summ},
 we can apply Lemma~\ref{lemma:deltaE_stoc_decay} in the polyellipses defined
 in Lemmas \ref{lemma:polyell_analyticity} and \ref{lemma:polyell_analyticity_high-order}
 by $\vec{\ellrad}_s$ in \eqref{eq:ellrad_dep_on_b_s} (or \eqref{eq:ellrad_dep_on_b} for $s=0$)
 and obtain
 \begin{align*}
 \Delta E_{\valpha,\vbeta}
 &\leq C_{\mathrm{SE}}(\vec \ellrad_s) \sup_{\zz \in \mcE_{\vec{\ellrad}_s}^\finiteset} |\Theta [\vec{\Delta}^{\textnormal{det}} [u^{\valpha}(\xx,\zz)]|
  e^{- \sum_{j\geq 1} g(\ellrad_{s,j})  m(\beta_j-1)} \\
 &\leq C_{\mathrm{SE}}(\vec \ellrad_s)  \norm{\Theta}{H^{-1}(\mathscr{B})}\sup_{\zz \in \mcE_{\vec{\ellrad_s}}^\finiteset}
 \norm{\vec{\Delta}^{\textnormal{det}} [u^{\valpha}(\cdot,\zz)]}{H^1(\mathscr{B},\mathbb{C})}
  e^{- \sum_{j \geq 1} g(\ellrad_{s,j}) m(\beta_j-1)},
 \end{align*}
 where $\finiteset \subset \nset_+$ denotes the support of $\vbeta-\oone$.
 Next, assuming that the spatial discretization consists of piecewise
 linear finite elements with spatial mesh sizes
 \eqref{eq:choice_of_h_and_FEM} and combining the a-priori bounds on
 the decay of the difference operators coming from the Combination
 Technique theory (see, e.g., \cite[proof of
 Theorem2]{griebel.harbrecht:tensor}) with
 \eqref{eq:mixed_to_standard_H_inclusions} and the fact that
 $u \in H^{1+s}(\mathscr B)$ for any $s=0, 1,\ldots,s_{\max}$,
 we have the following bound for every $\zz$ in the Bernstein polyellipse, $\mcE_{\vec{\ellrad}_s}^\finiteset$:
 \begin{align}
   \norm{\vec{\Delta}^{\textnormal{det}} [u^{\valpha}(\cdot,\zz)]}{H^1(\mathscr{B},\mathbb{C})}
   & \leq C_{CT} \norm{u(\cdot,\zz)}{\mathcal{H}^{\oone+s\qq}(\mathscr{B},\mathbb{C})}2^{-\sum_{i=1}^D \alpha_i \min\{1,q_i s\}} \\
   & \leq C_{CT} C_{s,\qq}\norm{u(\cdot,\zz)}{H^{1+s}(\mathscr{B},\mathbb{C})}2^{-\valpha \cdot \vec{r}_{\mathrm{FEM}}(s\qq)},
 \end{align}
 for $\qq \in \rset_+^D$ s.t. $q_i$ s.t. $|\qq|=1$, some $C_{CT}>0$ independent of $u$,
 and where $C_{s,\qq}$ is the embedding constant between
 $\mathcal{H}^{\oone+s\qq}(\mathscr{B},\mathbb{C})$ and $H^{1+s}(\mathscr{B},\mathbb{C})$.
 We then have the following bound:
 \begin{align*}
 \Delta E_{\valpha,\vbeta}
 & \leq C_{\mathrm{SE}}(\vec \ellrad_s) \norm{\Theta}{H^{-1}(\mathscr{B})} C_{CT}C_{s,\qq}
 \sup_{\zz \in \mcE_{\vec{\ellrad}_s}^\finiteset} \norm{u(\cdot,\zz)}{H^{1+s}(\mathscr{B},\mathbb{C})}
  e^{- \sum_j g_{s,j} m(\beta_j-1)} 2^{-\valpha \cdot \vec{r}_{\mathrm{FEM}}(s\qq)},
 \end{align*}
 where the constant, $C_{\mathrm{SE}}(\vec \ellrad_s)$ is bounded
 independently of $\vbeta$, thanks again to
 Lemma~\ref{lemma:deltaE_stoc_decay}. The proof is then concluded by
 recalling that
 $\sup_{\zz \in \mcE_{\vec{\ellrad}_s}^\finiteset}
 \norm{u(\cdot,\zz)}{H^{1+s}(\mathscr{B},\mathbb{C})} \leq C_{s,u}$
 independently of $\vbeta$ and $\finiteset$ due to
 Lemmas~\ref{lemma:polyell_analyticity}
 and~\ref{lemma:polyell_analyticity_high-order}.
\end{proof}

Observe that the result in Lemma 8 gives a bound on $\Delta E_{\valpha,\vbeta}$ parametric on the vector $\qq$.
The optimal choice for such $\qq$ will be discussed later on, in the proof of the main theorem,
namely Theorem \ref{theo:MISC_conv}.%

\begin{remark}[Relaxing the simplifying assumption]\label{rem:keep_it_simple}
  We now clarify why the assumption $p_s < \frac{1}{2}$, for
  $s=0,1,\ldots,s_{\max}$, is not essential.  Due to a suboptimal
  choice of $\ellrad_{s,j}$ in
  Lemma~\ref{lemma:polyell_analyticity_high-order} (and
  Lemma~\ref{lemma:polyell_analyticity} for $s=0$), the sequence
  $\{e^{-g_{s,j} }\}_{j \geq 1}$ in Lemma~\ref{lemma:gn_summ}, which
  is related to the solution $u$ and which appears in the proof of the
  MISC convergence theorem, has worse summability $\tilde{p}_s=\frac{p_s}{1-p_s}$
  than the sequence $\{b_{s,j} \}_{j \geq 1}$, whose summability coefficient is $p_s$,
  and which is related to the  diffusion coefficient, $a$.
  As we see in the main theorem stated below, we need $\tilde{p}_s<1$
  to guarantee  convergence of MISC, which implies $p_s < \frac{1}{2}$. %
  By choosing the polyellipses in Lemmas~\ref{lemma:polyell_analyticity}
  and~\ref{lemma:polyell_analyticity_high-order} by the more elaborate
  strategy presented in \cite{cohen.devore.schwab:nterm2},
  it would be possible to obtain the better summability $\tilde{p}_s=p_s$
  for the sequence $\{e^{-g_{s,j} }\}_{j \geq 1}$, which would only imply
  the less stringent condition $p_s<1$ and a better estimate for the MISC
  convergence rate.  However,
  for ease of exposition, we maintain the sub-optimal choice, which is
  enough for the purpose of presenting the argument that proves
  convergence of MISC.  The restriction $p_s<\frac{1}{2}$ formally
  prevents us from applying the MISC convergence analysis to diffusion
  coefficients with low spatial regularity. In practice, we see in
  Section~\ref{s:numerics} that the convergence estimates are
  numerically verified beyond this restriction.
\end{remark}

Before proving the main theorem of this section, we finally need the following technical lemma.

\begin{lemma}[Bounding a sum of double exponentials]\label{lemma:double_exp_sum}
  For $a>0$, $b \geq 2$ and $0\le c < a b$,
  \[
  \sum_{k=1}^\infty e^{-a b^k+c k} \leq e^{-ab + \varepsilon(a,b,c)}
  \]
  holds, where for each fixed $c$ and $b$, $\varepsilon(\cdot,b, c)$ is a
  monotonically decreasing, strictly positive function with %
  $\varepsilon(a,b,c) \rightarrow c \mbox{ as } a \rightarrow +\infty$.
\end{lemma}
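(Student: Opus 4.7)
The plan is to factor out the $k=1$ term (which should be the dominant one) and show the remaining tail is small for large $a$. Concretely, I will bound the whole series by a geometric series by controlling $b^k$ from below linearly in $k$, and then read off $\varepsilon(a,b,c)$ from the closed form.

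First, I would prove the elementary inequality $b^k \geq kb$ for all $k \geq 1$ whenever $b \geq 2$. This holds trivially for $k=1,2$ (the latter using $b\geq 2$) and follows by induction: if $b^k \geq kb$, then $b^{k+1} \geq b \cdot kb = kb^2 \geq 2kb \geq (k+1)b$ since $k\geq 1$. This step is routine.

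Using this bound on $b^k$, I would replace $-ab^k$ with the linear-in-$k$ quantity $-abk$ to obtain, term by term,
\[
  e^{-ab^k+ck} \leq e^{-(ab-c)k},
\]
which is valid because $ab-c>0$ by hypothesis. Summing the geometric series from $k=1$ yields
\[
  \sum_{k=1}^\infty e^{-ab^k+ck} \leq \sum_{k=1}^\infty e^{-(ab-c)k}
  = \frac{e^{-(ab-c)}}{1-e^{-(ab-c)}} = e^{-ab}\cdot \frac{e^{c}}{1-e^{-(ab-c)}}.
\]

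Finally, I would define
\[
  \varepsilon(a,b,c) = c - \log\bigl(1 - e^{-(ab-c)}\bigr),
\]
so that the bound rewrites as $e^{-ab+\varepsilon(a,b,c)}$, matching the statement. The required qualitative properties follow directly: since $-\log(1-e^{-(ab-c)})>0$ for every finite $a$, the function $\varepsilon$ is strictly positive; since $a \mapsto e^{-(ab-c)}$ is strictly decreasing in $a$, the map $a\mapsto -\log(1-e^{-(ab-c)})$ is strictly decreasing, hence $\varepsilon(\cdot,b,c)$ is monotonically decreasing in $a$; and $e^{-(ab-c)}\to 0$ as $a\to\infty$ gives $\varepsilon(a,b,c)\to c$. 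The only ``obstacle'' is essentially bookkeeping: one must keep $b\geq 2$ visible so the linear lower bound on $b^k$ is available, and one must keep $c<ab$ visible so the geometric series converges. Both conditions appear explicitly in the hypotheses, so no sharper technique is needed.
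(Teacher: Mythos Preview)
Your proof is correct and follows essentially the same strategy as the paper: replace the doubly-exponential exponent by a linear one via an elementary inequality on $b^k$, then sum the resulting geometric series. The paper first peels off the $k=1$ term and uses $b^k-1\ge k$ on the shifted tail, whereas you use $b^k\ge kb$ directly from $k=1$; after simplification the two routes yield the \emph{identical} $\varepsilon(a,b,c)=\log\!\bigl(e^{c}/(1-e^{c-ab})\bigr)$, so your argument is a slightly more streamlined version of the paper's.
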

\begin{proof}
  \begin{align*}
    \sum_{k\ge 1} e^{-a b^k+c k}
    &= e^{-ab+c} + \sum_{k\ge 2} e^{-a b^k+c k}
      =    e^{-ab+c} + \sum_{k\ge 1} e^{-a b^{k+1}+c(k+1)} \\
    &=   e^{-ab} \left( e^{c}+  e^{c}\sum_{k\ge 1} e^{-a b (b^{k}-1) + ck} \right).
  \end{align*}
  We observe that for $b\geq 2$ we have $b^k-1 \geq k$ for $k \geq 1$
  integer. Therefore, $e^{-a b (b^{k}-1)} \leq e^{-abk}$ and we have
  \[
  \sum_{k\ge 1} e^{-a b^k+c k}
  \leq e^{-ab} \left( e^{c}+  e^{c} \sum_{k\ge 1} e^{k(c -a b)} \right)
  = e^{-ab} \left( e^{c}+  \frac{e^{2c-ab}}{1-e^{c-ab}}  \right).
  \]
  Then,
  \[
  \varepsilon(a,b,c) = \log\left( e^{c}+ \frac{e^{2c-ab}}{1-e^{c-ab}}\right),
  \]
  and we finish by verifying that the function, $\varepsilon$, has the required properties.
\end{proof}

\noindent We are now ready to state and prove our main result.

\begin{theorem}[MISC convergence theorem]\label{theo:MISC_conv}
  Under Assumptions~\ref{assump:shift}---\ref{assump:work}, the
  profit-based MISC estimator, $\mathscr{M}_\mathcal{I}$, built using
  the set $\mathcal{I}$ defined in \eqref{eq:opt_set}, Stochastic
  Collocation with Clenshaw-Curtis points as in \eqref{eq:CC_def}-\eqref{eq:m_CC},
  and spatial discretization
  obtained by tensorizing multi-linear piecewise finite element spaces on
  each mesh, $\{\mesh_i\}_{i=1}^D$,
  with mesh-sizes $h_i$ as in \eqref{eq:choice_of_h_and_FEM}
  for solving the deterministic problems, satisfies, for any $\delta > 0$,
  \[
    \big| \E{\qoi} - \mathscr{M}_\mathcal{I}[\qoi] \big| \leq
    C_{\delta}\work{\mathscr{M}_{\mathcal{I}}}^{-r_{\mathrm{MISC}}
    + \delta},
  \]
  for some constant $C_\delta>0$ that is independent of
  $\work{\mathscr{M}_{\mathcal{I}}}$ and
  tends to infinity as $\delta \to 0$. Moreover, $\work{\mathscr{M}_{\mathcal{I}}}$ is
  given by \eqref{eq:work_decomp}, and
  \[
  r_{\mathrm{MISC}} = \max_{s=0,\ldots,s_{\max}}
  \begin{cases}
    r_{\mathrm{det}(s)},
    &\text{if } r_{\mathrm{det}}(s) \leq \frac{1}{p_s} - 2,\\
    \left( \frac{1}{p_0}-2 \right) \left( 1 +
      \frac{1}{r_{\mathrm{det}}(s)} \left( \frac{1}{p_0} -
        \frac{1}{p_s} \right) \right)^{-1}, & \text{otherwise,}
  \end{cases}
  \]
where
\[
  r_{\mathrm{det}}(s) =
  \min \left\{ \frac{1}{\max_{i=1,\ldots,D} \gamma_i d_i} , \frac{s}{\sum_{j=1}^D \gamma_j d_j} \right\}.
\]
\end{theorem}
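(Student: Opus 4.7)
The plan is to apply Theorem~\ref{theo:general-conv-result} to a weighted geometric interpolation of the error bounds from Lemma~\ref{lemma:error}. For a given exponent $p\in(0,1)$, the quantity $\sum_{[\valpha,\vbeta]} P_{\valpha,\vbeta}^p\Delta W_{\valpha,\vbeta}=\sum(\Delta E_{\valpha,\vbeta})^p(\Delta W_{\valpha,\vbeta})^{1-p}$ must be shown to be finite; the rate is then $1/p-1$, so I would aim to determine the infimum of admissible $p$. The key trick is that, rather than taking a single $s$ in Lemma~\ref{lemma:error}, I would combine the $s=0$ bound (strong stochastic summability via $\tilde p_0$) with a general $s$ bound (good deterministic decay via $r_{\textnormal{FEM}}(s\qq)$) through a parameter $\theta\in[0,1]$. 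Since $\Delta E_{\valpha,\vbeta}\le C_0\Delta E^{\textnormal{stoc}}_{\vbeta}(0)$ and $\Delta E_{\valpha,\vbeta}\le C_s\Delta E^{\textnormal{det}}_\valpha(s)\Delta E^{\textnormal{stoc}}_\vbeta(s)$ (using $\Delta E^{\textnormal{det}}(0)\equiv 1$), the elementary inequality $\Delta E^p\le \Delta E^{p\theta}\cdot\Delta E^{p(1-\theta)}$ produces
\[
(\Delta E_{\valpha,\vbeta})^p\le C\,[\Delta E^{\textnormal{det}}_\valpha(s)]^{p(1-\theta)}\,e^{-p\sum_{j\ge 1}m(\beta_j-1)\,\bar g_j},\qquad \bar g_j=\theta g_{0,j}+(1-\theta)g_{s,j}.
\]

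Combining this with the work bound from Lemma~\ref{lemma:work}, the sum $\sum(\Delta E)^p(\Delta W)^{1-p}$ factorizes into independent deterministic ($\prod_{i=1}^D$) and stochastic ($\prod_{j\ge 1}$) pieces. The deterministic factor is a product of geometric series that converges iff $-p(1-\theta)r_{\textnormal{FEM}}(s\qq)_i+(1-p)\gamma_i d_i<0$ for every $i$; optimizing over $\qq\in\rset_+^D$ with $|\qq|=1$ by balancing $q_i\propto \gamma_i d_i$ (subject to the saturation cap $q_is\le 1$) reduces this to $p>1/(1+(1-\theta)r_{\textnormal{det}}(s))$, where $r_{\textnormal{det}}(s)=\min\{1/\max_i\gamma_id_i,\,s/\sum_j\gamma_jd_j\}$. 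For the stochastic factor, Lemma~\ref{lemma:double_exp_sum} applied to each inner sum over $\beta_j\ge 2$ (with $a=p\bar g_j$, $b=2$, $c=(1-p)\log 2$) gives a bound of order $e^{-p\bar g_j}$; the outer infinite product then converges iff $\sum_j e^{-p\bar g_j}<\infty$. A Hölder-type splitting of $e^{-p\bar g_j}=(e^{-g_{0,j}})^{p\theta}(e^{-g_{s,j}})^{p(1-\theta)}$, together with the summabilities $\{e^{-g_{0,j}}\}\in\ell^{\tilde p_0}$ and $\{e^{-g_{s,j}}\}\in\ell^{\tilde p_s}$ from Lemma~\ref{lemma:gn_summ}, yields the second constraint $p[\theta(1/p_0-1)+(1-\theta)(1/p_s-1)]>1$.

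For fixed $s$, the infimum admissible $p$ is the maximum of the two lower bounds, and I would minimize this maximum over $\theta\in[0,1]$. If $r_{\textnormal{det}}(s)\le 1/p_s-2$, the stochastic constraint is already implied by the deterministic one at $\theta=0$, so the best rate is simply $r_{\textnormal{det}}(s)$ (Case~1). Otherwise (Case~2), balancing the two constraints pins down an interior $\theta\in(0,1)$; direct algebra (setting $v=1/p-1$ and eliminating $\theta$) produces the rate $v=(1/p_0-2)(1+(1/r_{\textnormal{det}}(s))(1/p_0-1/p_s))^{-1}$. Taking the maximum over $s\in\{0,\ldots,s_{\max}\}$ defines $r_{\textnormal{MISC}}$, and the loss of $\delta$ in the final exponent is the standard price of taking $p$ infinitesimally larger than critical so that $\sum P^p\Delta W$ is strictly finite. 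The main obstacle is the careful bookkeeping of the Hölder interpolation in the stochastic product (where the two reference sequences have distinct summability exponents) and the case analysis for the optimal $\qq$, which must verify that the balanced choice $q_i\propto\gamma_id_i$ is feasible precisely when $s\le \sum_j\gamma_jd_j/\max_i\gamma_id_i$ and otherwise saturates at $1/\max_i\gamma_id_i$; everything else is a parametric optimization driven by Theorem~\ref{theo:general-conv-result}.
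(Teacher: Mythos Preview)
Your proposal is correct and follows essentially the same route as the paper: geometric interpolation between the $s=0$ and $s>0$ error bounds (your $\theta$ is the paper's $1-\lambda$), factorization of $\sum(\Delta E)^p(\Delta W)^{1-p}$ into deterministic and stochastic pieces, optimization over $\qq$ to obtain $r_{\mathrm{det}}(s)$, Lemma~\ref{lemma:double_exp_sum} followed by H\"older to reduce the stochastic product to the summabilities of Lemma~\ref{lemma:gn_summ}, and the same two-case min--max in $\theta$. The only cosmetic difference is that the paper first introduces an extra H\"older exponent $\eta_2$ and then optimizes it, whereas you write the resulting constraint $p[\theta/\tilde p_0+(1-\theta)/\tilde p_s]>1$ directly; also be aware that the hypothesis $c<ab$ of Lemma~\ref{lemma:double_exp_sum} fails for finitely many $j$ (the paper handles this with a finite exceptional set $\overline{\mathcal J}$), a detail you should make explicit.
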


\begin{proof}
In this proof, we use Theorem~\ref{theo:general-conv-result}. Therefore,
we need to estimate the $p$-summability of the weighted profits for some $p<1$.
To this end, we use Lemma~\ref{lemma:error}.
Observe that Lemma~\ref{lemma:error} provides a family of bounds for each $\Delta E_{\valpha,\vbeta}$, depending on $s$;
therefore we would then ideally choose the best $s$ for each $\Delta E_{\valpha,\vbeta}$.
However, this optimization problem is too complex and we simplify it
by assuming that
\begin{itemize}
	\item only two values of $s$ will be considered, $s=0$ and $s=s^*$ (which will not
      necessarily coincide with $s_{\max}$);
    \item the optimization between $s=0$ and $s=s^*$
      will not be carried out individually on each $\Delta E_{\valpha,\vbeta}$, but we will
      rather take a ``convex combination'' of the two corresponding estimates and
      choose the best outcome only at the end of the proof.
\end{itemize}
To this end, we first need to rewrite the result of the Lemma~\ref{lemma:error} in a more suitable form
for any fixed $s^* \in \{1,2,\ldots,s_{\max}\}$; note that from here on, with a slight abuse of notation,
we drop the superscript $^*$ and simply use $s$ to denote the fixed value.
Thus, for such fixed $s$, consider the statement of Lemma~\ref{lemma:error},
let $C_E = \max\{C_{0}, C_{s}\}$, $\chi_{j,s} = g_{s,j} \log_2 e$, and $\theta_{j,s} = (g_{0,j} - g_{s,j}) \log_2 e$
and combine \eqref{eq:deltaE_bound_with_min}--\eqref{eq:deltaE_factors_beta_s}, obtaining
  \begin{align*}
    \Delta E_{\valpha,\vbeta}
    & \leq \min_{t=\{0,s\}} C_t \Delta E_{\valpha}^{\textnormal{det}}(t)\Delta E_{\vbeta}^{\textnormal{stoc}}(t)
      \leq  C_E	\min_{\eta\in \{0,1\}} 2^{-\eta \vec{r}_{\mathrm{FEM}}(s\qq) \cdot \valpha}
    	\prod_{j \geq 1} e^{-m(\beta_j-1) [g_{s,j} + (1-\eta)(g_{0,j}-g_{s,j})]} \\
    & = C_E	\min_{\eta\in \{0,1\}} 2^{-\eta \vec{r}_{\mathrm{FEM}}(s\qq) \cdot \valpha} \prod_{j\geq 1} 2^{-m(\beta_j-1) [g_{s,j} + (1-\eta)(g_{0,j}-g_{s,j})] \log_2 e} \\
    & = C_E	\min_{\eta\in \{0,1\}} 2^{-\eta \vec{r}_{\mathrm{FEM}}(s\qq) \cdot \valpha } \prod_{j\geq 1} 2^{-m(\beta_j-1) [\chi_{j,s} + (1-\eta)\theta_{j,s}] } \\
    & = C_E 2^{- \sum_{j\ge 1} m(\beta_j-1) \chi_{j,s}- \max_{\eta\in \{0,1\}}\left( \eta \vec{r}_{\mathrm{FEM}}(s\qq) \cdot \valpha+\sum_{j\ge 1} m(\beta_j-1)(1-\eta)\theta_{j,s} \right) } \\
    & = C_E 2^{ - \sum_{j\ge 1} m(\beta_j-1) \chi_{j,s}- \max \left\{\sum_{j\ge 1} m(\beta_j-1) \theta_{j,s},\,\,\vec{r}_{\mathrm{FEM}}(s\qq) \cdot \valpha  \right\} }.
  \end{align*}
for an arbitrary $\qq \in \rset_+^D$ with $|\qq| = 1$ that we will choose later.
We can now bound the weighted sum of the profits as follows:
\begin{align}
  &\sum_{[\valpha,\vbeta] \in \nset_+^D \times \finitesuppset_+} P_{\valpha,\vbeta}^p \Delta W_{\valpha, \vbeta}
    = \sum_{[\valpha,\vbeta] \in \nset_+^D \times \finitesuppset_+} \Delta E_{\valpha,\vbeta}^p \Delta W_{\valpha}^{1-p} \Delta W_{\vbeta}^{1-p} \nonumber \\[7pt]
  &\leq C_E^p C_W^{1-p} \sum_{[\valpha,\vbeta] \in \nset_+^D \times \finitesuppset_+}
    2^{-p[\max\{\vec{r}_{\mathrm{FEM}}(s\qq) \cdot \valpha,\sum_{j \geq 1} m(\beta_j-1)\theta_{s,j}\} + \sum_{j \geq 1}
    m(\beta_j-1)\chi_{s,j}]} \nonumber\\[-12pt]
    &\hskip 4cm \cdot 2^{(1-p) \sum_{i=1}^D  \gamma_i d_i \alpha_i +(1-p) \sum_{j \geq 1} (\beta_j-1)} \nonumber \\[7pt]
  &= C_E^p C_W^{1-p} \sum_{[\valpha,\vbeta] \in \nset_+^D \times \finitesuppset_+} \min_{\lambda \in [0,1]}
    2^{-p[\lambda \vec{r}_{\mathrm{FEM}}(s\qq) \cdot \valpha +
    (1-\lambda) \sum_{j \geq 1} m(\beta_j-1)\theta_{s,j} + \sum_{j \geq 1}
    m(\beta_j-1) \chi_{s,j}]} \nonumber\\[-12pt]
  & \hskip 5cm \cdot 2^{(1-p)\sum_{i=1}^D  \gamma_i d_i \alpha_i +(1-p) \sum_{j \geq 1}(\beta_j-1)}
    \nonumber \\[7pt]
  &\leq C_E^p C_W^{1-p} \min_{\lambda \in [0,1]} \sum_{[\valpha,\vbeta] \in \nset_+^D \times \finitesuppset_+}
    2^{-p[\lambda \vec{r}_{\mathrm{FEM}}(s\qq) \cdot \valpha +
    (1-\lambda) \sum_{j \geq 1} m(\beta_j-1)\theta_{s,j} + \sum_{j \geq 1}
    m(\beta_j-1) \chi_{s,j}]} \nonumber \\[-12pt]
    & \hskip 5cm \cdot 2^{(1-p) \sum_{i=1}^D  \gamma_i d_i \alpha_i +(1-p) \sum_{j \geq 1}(\beta_j-1)}
    \nonumber \\[5pt]
  & = C_E^p C_W^{1-p} \min_{\lambda \in [0,1]}
	\left(\prod_{i=1}^D \sum_{k=1}^\infty 2^{-[p(\lambda
    r_{\mathrm{FEM}}(s\qq)_i+\gamma_i d_i) -\gamma_i d_i] k } \right)
    \nonumber \\
    & \hskip 3cm \cdot \left(\prod_{j=1}^\infty \sum_{k=1}^\infty 2^{-pm(k-1)((1-\lambda)
    \theta_{s,j} + \chi_{s,j}) +(1-p)(k-1)} \right).
    \label{eq:main_theo_opt}
\end{align}
We then investigate under what conditions each of the two factors is
finite (the constants $C_E,C_W$ are bounded, cf.
Lemmas~\ref{lemma:work} and~\ref{lemma:error}). Before proceeding,
we comment on the equations above. As we already
mentioned at the beginning of the proof, here we are working in a
suboptimal setting in which instead of choosing a different $s$ for
each $\Delta E_{\valpha,\vbeta}$, we restrict ourselves to
choosing between only two values, $s=0$ or a certain $s>0$
(second line). Observe that we have an equality between the second and the third
line since $2^x$ is a monotone function of $x$. Hence, the minimum
is always attained at either $\lambda=0$ or $\lambda=1$. However,
when it comes to switching the order of the sum and the minimum in
the fourth line, i.e., bounding the sum by choosing the same
$\lambda$ to bound every term in the sum, allowing for fractional
$\lambda$ gives a tighter bound on the overall sum than just
considering $\lambda \in \{0,1\}$. Roughly speaking, we are somehow
``mimicking'' the fact that the optimal bound of the sum would use a
different value of $\lambda$ for every term by choosing an overall
$\lambda$ that is between the two possible values.

To investigate the condition for which each of the two factors in \eqref{eq:main_theo_opt} are bounded,
we immediately have for the first factor for all $i=1,\ldots, D$
that
\begin{equation}\label{eq:sum_constraints}
  {p\left(\lambda r_{\mathrm{FEM}}(s\qq)_i+\gamma_i d_i\right) -\gamma_i d_i}  > 0 \quad
  \Longrightarrow\quad
  p > \frac{\gamma_i d_i}{\lambda r_{\mathrm{FEM}}(s\qq)_i + \gamma_i d_i } =
  \left( \lambda \frac{r_{\mathrm{FEM}}(s\qq)_i}{\gamma_i d_i} + 1\right)^{-1}
  \,\,.
\end{equation}
Our goal is to optimize the above constraint for the summability exponent $p$.
To this end, we will minimize the right hand side of \eqref{eq:sum_constraints},
observing that it decreases with respect to $r_{\mathrm{FEM}}(s\qq)_i$.
Hence, recalling the dependence of $r_{\mathrm{FEM}}(s\qq)_i$
(cf. Lemma \ref{lemma:error}) on the vector of weights $\qq$ we consider
\begin{align*}
r_{\mathrm{det}}(s)
& = \max_{\qq\in \rset^D_+, |\qq|=1}\min_{i=1,\ldots,D} \frac{r_{\mathrm{FEM}}(s\qq)_i}{\gamma_i d_i} \\ %
& = \max_{\qq\in \rset^D_+, |\qq|=1}\min_{i=1,\ldots,D} \min\left\{ \frac{1}{\gamma_i d_i} , \frac{s q_i}{\gamma_i d_i} \right\} \\
& = \max_{\qq\in \rset^D_+, |\qq|=1}\min\left( \frac{1}{\max_{i=1,\ldots,D}\gamma_i d_i} , \min_{i=1,\ldots,D}\frac{s q_i}{\gamma_i d_i} \right) \\
& = \min \left( \frac{1}{\max_{i=1,\ldots,D} \gamma_i d_i} , \max_{\qq\in \rset^D_+, |\qq|=1} \min_{i=1,\ldots,D} \frac{s q_i}{\gamma_i d_i} \right)
\end{align*}
which is maximized by making $\frac{s q_i}{\gamma_i d_i}$ constant over $i$, i.e.
\[
q_i = \frac{\gamma_i d_i }{\sum_{j=1}^D \gamma_j d_j} \Rightarrow
r_{\mathrm{det}}(s) =
\min \left\{ \frac{1}{\max_{i=1,\ldots,D} \gamma_i d_i} , \frac{s}{\sum_{j=1}^D \gamma_j d_j} \right\}.
\]
With this optimal choice then \eqref{eq:sum_constraints} becomes simply%
\begin{equation}\label{eq:pspatial}
  p > \left( \lambda r_{\mathrm{det}}(s) + 1\right)^{-1}\,\,.
\end{equation}

For the second factor, denoting the generic term of the inner
sum as $a_{j,k}$ for brevity and observing that $a_{j,1} = 1$ for
every $j$, we have
  \begin{align*}
    \prod_{j=1}^\infty \sum_{k=1}^\infty a_{j,k}
    &	\leq \prod_{j=1}^\infty \left( 1 + \sum_{k=2}^\infty a_{j,k} \right)
     	= \exp \left(\sum_{j=1}^\infty \log \left( 1 + \sum_{k=2}^\infty a_{j,k} \right) \right)
     	\leq \exp \left(\sum_{j=1}^\infty \sum_{k=2}^\infty a_{j,k}\right).
  \end{align*}
We thus only have to discuss the convergence of the sum
\begin{equation}\label{eq:summab_disc1}\aligned
  \sum_{j=1}^\infty\sum_{k=2}^\infty 2^{-pm(k-1)[(1-\lambda) \theta_{s,j} +  \chi_{s,j}] +(1-p)(k-1)}
  &= \sum_{j=1}^\infty\sum_{k=1}^\infty 2^{-pm(k)[(1-\lambda) \theta_{s,j} +  \chi_{s,j}] +(1-p)k}\\
  &\leq\sum_{j=1}^\infty\sum_{k=1}^\infty 2^{-p 2^{k-1}[(1-\lambda) \theta_{s,j} +  \chi_{s,j}] +(1-p)k}\,,
\endaligned
\end{equation}
where the last step is a consequence of the fact that, for
Clenshaw-Curtis points,
$m(k) \geq 2^{k-1}$ for $k \geq 1$, cf. \eqref{eq:m_CC}. Moreover,
$(1-\lambda) \theta_{s,j} + \chi_{s,j} \geq 0$. To study the
summability of \eqref{eq:summab_disc1}, we want to use
Lemma~\ref{lemma:double_exp_sum} to bound the inner sum in
\eqref{eq:summab_disc1}. First, we rewrite
\begin{align*}
\sum_{k=1}^\infty 2^{-p 2^{k-1}[(1-\lambda) \theta_{s,j} +  \chi_{s,j}] +(1-p)k}
&= \sum_{k=1}^\infty \exp\left( -p \frac{\log 2}{2}[(1-\lambda) \theta_{s,j} + \chi_{s,j}] 2^k +(1-p)k \log 2 \right)  \\
&= \sum_{k=1}^\infty \exp\left( -a2^k +c k \right) \\
& \mbox{ with }
a = p \frac{\log 2}{2}[(1-\lambda) \theta_{s,j} + \chi_{s,j}] > 0, \quad
c=(1-p)\log 2 > 0,
\end{align*}
where we have used the notation in Lemma~\ref{lemma:double_exp_sum}. Note that this lemma
holds true under the assumptions that $a>0$ and $ 0 \leq c < 2a$, where the
latter has to be verified as follows
\[
2a > c
\Leftrightarrow
p \log 2[(1-\lambda) \theta_{s,j} + \chi_{s,j}] > (1-p)\log 2
\Leftrightarrow
(1-\lambda) \theta_{s,j} + \chi_{s,j} > \frac{(1-p)}{p},
\]
which is true whenever
\[
\chi_{s,j} > r_{\text{det}}(s),
\]
due to \eqref{eq:pspatial}, $\theta_{s,j} \geq 0$ and $\lambda \leq 1$.
Define $\overline{\mathcal J} = \{ j \geq 1 \::\: \chi_{s,j} \leq r_{\text{det}}(s)\}$ which has a finite cardinality since
$\chi_{s,j} \to \infty$ as $j \to \infty$.
Resuming from \eqref{eq:summab_disc1}, we have, due to  Lemma~\ref{lemma:double_exp_sum},
\begin{align*}
  \sum_{j=1}^\infty \sum_{k=1}^\infty 2^{-p 2^{k-1}[(1-\lambda) \theta_{s,j} +  \chi_{s,j}] +(1-p)k}
  & \leq C(\overline{\mathcal J}) +  \sum_{j \notin \overline{\mathcal
    J}} \sum_{k=1}^\infty \exp\left( -a2^k +c k \right) \\
  & \leq C(\overline{\mathcal J}) + \sum_{j \notin \overline{\mathcal
    J}} e^{-2a + \varepsilon(a,2,c)},
\end{align*}
where $C(\overline{\mathcal J})$ is bounded, since $\# \overline{\mathcal J} < \infty$, and $\varepsilon(a,2,c)$
is a monotonically decreasing function with limit $c = (1-p) \log 2$
independent of $j$. Therefore, the previous series converges if and only if
\[
\sum_{j \notin \overline{\mathcal J}}^\infty e^{- 2a}
= \sum_{j \notin \overline{\mathcal J}}^\infty e^{-p \log 2[(1-\lambda) \theta_{s,j} + \chi_{s,j}]}
= \sum_{j \notin \overline{\mathcal J}}^\infty 2^{-p [(1-\lambda) \theta_{s,j} + \chi_{s,j}]}
\]
converges. Inserting the expression of $\theta_{s,j}$ and $\chi_{s,j}$, we get
\begin{align*}
\sum_{j \notin \overline{\mathcal J}}^\infty 2^{-p [(1-\lambda) \theta_{s,j} + \chi_{s,j}]}
& = \sum_{j \notin \overline{\mathcal J}}^\infty 2^{-p [(1-\lambda) (g_{0,j}-g_{s,j}) + g_{s,j}] \log_2 e}
= \sum_{j \notin \overline{\mathcal J}}^\infty e^{-p [(1-\lambda) (g_{0,j}-g_{s,j}) + g_{s,j}]} \\
& =  \sum_{j \notin \overline{\mathcal J}}^\infty e^{-p (1-\lambda) g_{0,j}}e^{ - p \lambda g_{s,j}}.%
\end{align*}
After applying the H\"older inequality in the previous summation with
exponents $\summab_1^{-1}+\summab_2^{-1}=1$
we need to simultaneously ensure the boundedness of the following sums:
\[
\sum_{j \notin \overline{\mathcal J}}^\infty e^{-p (1-\lambda) g_{0,j} \summab_2}
\quad\text{and}\quad
\sum_{j \notin \overline{\mathcal J}}^\infty e^{ - p \lambda g_{s,j} \summab_1}.
\]
Recalling the summability result in Lemma~\ref{lemma:gn_summ}, we
understand that the following two conditions must hold:
\[
\begin{cases}
  \displaystyle p(1-\lambda)\summab_2 \geq \frac{p_0}{1-p_0} \\[10pt]
  \displaystyle p\lambda \summab_1 \geq \frac{p_s}{1-p_s}
\end{cases}
\Rightarrow
\begin{cases}
  \displaystyle p \geq \frac{p_0}{1-p_0} \frac{1}{1-\lambda} \frac{1}{\summab_2}\\[10pt]
  \displaystyle p \geq \frac{p_s}{1-p_s}\frac{1}{\lambda}\left(1 - \frac{1}{\summab_2} \right),
\end{cases}
\]
which closes the discussion of the summability of the second factor of
\eqref{eq:main_theo_opt} for a fixed $s$.
Recalling the constraint \eqref{eq:pspatial} coming from the first factor of \eqref{eq:main_theo_opt},
we finally have to solve the following optimization problem:
\[%
  p >\min_{ \lambda \in [0,1], 1\leq \summab_2}\max\left\{
    \left( \lambda r_{\mathrm{det}}(s) + 1\right)^{-1},
    \frac{p_s}{1-p_s}\frac{1}{\lambda}\left(1 - \frac{1}{\summab_2} \right),
    \frac{p_0}{1-p_0} \frac{1}{1-\lambda} \frac{1}{\summab_2} \right\}
\]%
i.e., we have to choose $\summab_2$ and $\lambda$ to minimize the lower
bound on $p$.  We first optimally select $\summab_2$ given $\lambda$, i.e.,
we take $\summab_2 = \summab_2^*$ such that
\[
\frac{p_s}{1-p_s}\frac{1}{\lambda}\left(1 - \frac{1}{\summab_2^*} \right) = \frac{p_0}{1-p_0} \frac{1}{1-\lambda} \frac{1}{\summab_2^*}
\,\, \Rightarrow \,\,
\summab_2^* = 1+\frac{1-p_s}{p_s} \frac{p_0}{1-p_0} \frac{\lambda}{1-\lambda}
\]
Substituting back, we obtain
\begin{align*}
  \frac{p_s}{1-p_s}\frac{1}{\lambda} \frac{\summab_2^*-1}{\summab_2^*}
  &=  \left( {\frac{1}{p_0} - 1 + \lambda \left( \frac{1}{p_s} - \frac{1}{p_0} \right)} \right)^{-1},
\end{align*}
so that the minimization problem reads
\begin{equation}\label{eq:p_bound_semiopt}\aligned
  p &>\min_{\lambda \in [0,1]}
  \max\left\{f_{1}(\lambda,s),f_2(\lambda, s) \right\}, \\
  & f_{1}(\lambda,s) = \left( \lambda r_{\mathrm{det}}(s) + 1\right)^{-1}, \quad
  f_2(\lambda, s) = \left( {\frac{1}{p_0} - 1 + \lambda \left( \frac{1}{p_s} - \frac{1}{p_0} \right)} \right)^{-1}.
\endaligned
\end{equation}
Now, we recall that $p_0 \le p_s$. Hence, $f_2(\lambda, s)$ is
increasing with $\lambda$. Conversely, $f_{1}(\lambda,s)$ is
decreasing with $\lambda$ since %
  $r_{\mathrm{det}}(s)$ is a positive number.  Furthermore, notice
that we cannot have $f_{1}(\lambda,s)<f_2(\lambda,s)$ for all
$\lambda \in [0,1]$.  Indeed, the previous condition is equivalent to
$f_{1}(0, s) \leq f_2(0, s)$, i.e.,
$1 \leq \frac{p_0}{1-p_0} \Rightarrow p_0 \geq \frac{1}{2}$, which
does not satisfy Assumption~\ref{assump:b_and_p}. Note that, in this
case, the lower bound for $p$ in \eqref{eq:p_bound_semiopt} is
minimized for $\lambda =0$, implying that $p > \frac{p_0}{1-p_0} > 1$,
i.e., the method does not converge (cf. the statement of
Theorem~\ref{theo:general-conv-result}).  Thus, we have only two cases
(see also Figure~\ref{fig:main_theo_cartoon}):
\begin{description}
\item[Case 1] $f_{1}(\lambda,s)>f_2(\lambda, s)$ for all
  $\lambda \in [0,1]$, which means that the convergence of the method
  is dictated by the spatial discretization.  Given that
  $f_{1}$ is decreasing and $f_2$ is increasing, the
  previous condition is equivalent to $f_1(1, s) \geq f_2(1, s)$,
  i.e.,
  $ r_{\mathrm{det}}(s) \leq \frac{1}{p_s} - 2$.  In this case, the lower bound
  \eqref{eq:p_bound_semiopt} is minimized for $\lambda =1$, and we
  have
  $p > \left( r_{\mathrm{det}}(s) + 1\right)^{-1} $.
    \item[Case 2]
      There exists $\lambda^* \in (0,1)$ such that
        $f_{1}(\lambda,s)=f_2(\lambda, s)$.  This
      condition is equivalent to the two conditions
      \[
      \begin{cases}
        f_{1}(0,s) \geq f_2(0, s) \\
        f_{1}(1,s) \leq f_2(1, s)
      \end{cases}
      \Rightarrow
      \begin{cases}
        1 \leq \frac{1}{p_0} - 1 \\
        r_{\mathrm{det}}(s) \geq
        \frac{1}{p_s} - 2.
      \end{cases}
      \]
      Solving for $\lambda^*$ yields
      \begin{align*}
        \lambda^* &= \frac{\frac{1}{p_0}-2}
                  {r_{\mathrm{det}}(s) + \frac{1}{p_0} -
                  \frac{1}{p_s}} > 0
      \end{align*}
      which yields $p > \overline p$, where
      \[ \aligned \overline p &= \left( \left(\frac{\frac{1}{p_0}-2}
            {r_{\mathrm{det}}(s) + \frac{1}{p_0} -
              \frac{1}{p_s}}\right) r_{\mathrm{det}}(s) + 1\right)^{-1}\\
        &= \left(1 + \left( \frac{1}{p_0}-2 \right) \left( 1 +
            \frac{1}{r_{\mathrm{det}}(s)} \left( \frac{1}{p_0} -
              \frac{1}{p_s} \right) \right)^{-1}\right)^{-1} \,\,.
            \endaligned
      \]
\end{description}
Since the previous computations were carried out for a fixed
$s$, we can take the minimum over all possible values of
$s$.  Then, we can apply Theorem~\ref{theo:general-conv-result} and
derive the convergence estimate,
\[
\big| \E{\qoi} - \mathscr{M}_\mathcal{I}[\qoi] \big| \leq C_P(p)\work{\mathscr{M}_{\mathcal{I}}}^{1-1/p},
\]
where $1-1/p = 1 - \max_{s=0,\ldots, s_{\max}}(1/{\overline p}) + \delta$ for any $\delta > 0$, which we reformulate as
\[
  \big| \E{\qoi} - \mathscr{M}_\mathcal{I}[\qoi] \big| \leq
  C_P\left(\frac{1}{1+r_{\mathrm{MISC}} -
      \delta}\right)\work{\mathscr{M}_{\mathcal{I}}}^{-r_{\mathrm{MISC}}
  + \delta},
\]
with $r_{\mathrm{MISC}}  = \max_{s=0,\ldots, s_{\max}}({1}/{\overline p}) -1$.
\end{proof}

\begin{figure}[t]
  \centering
  \includegraphics[width=0.45\textwidth]{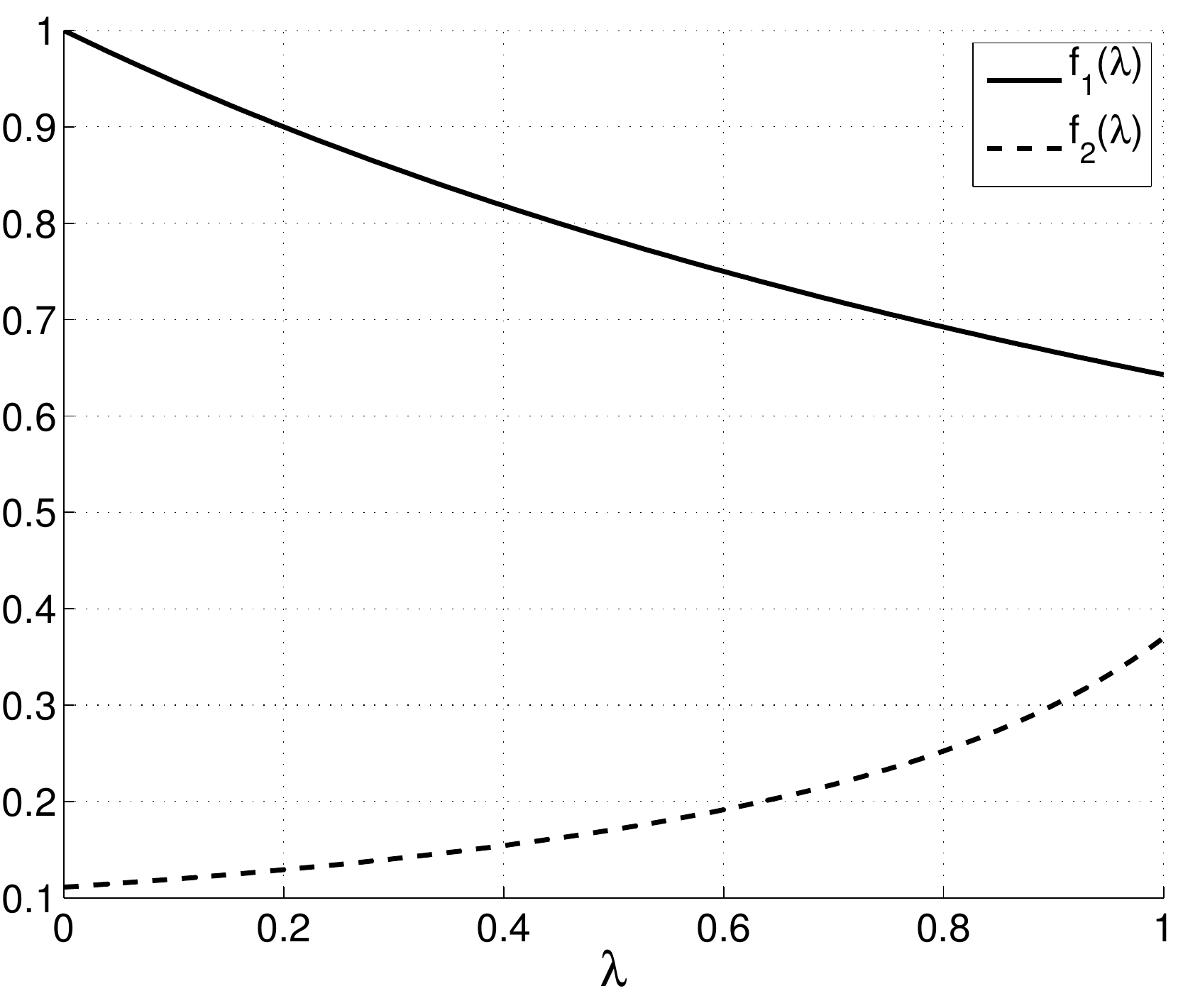}
  \includegraphics[width=0.45\textwidth]{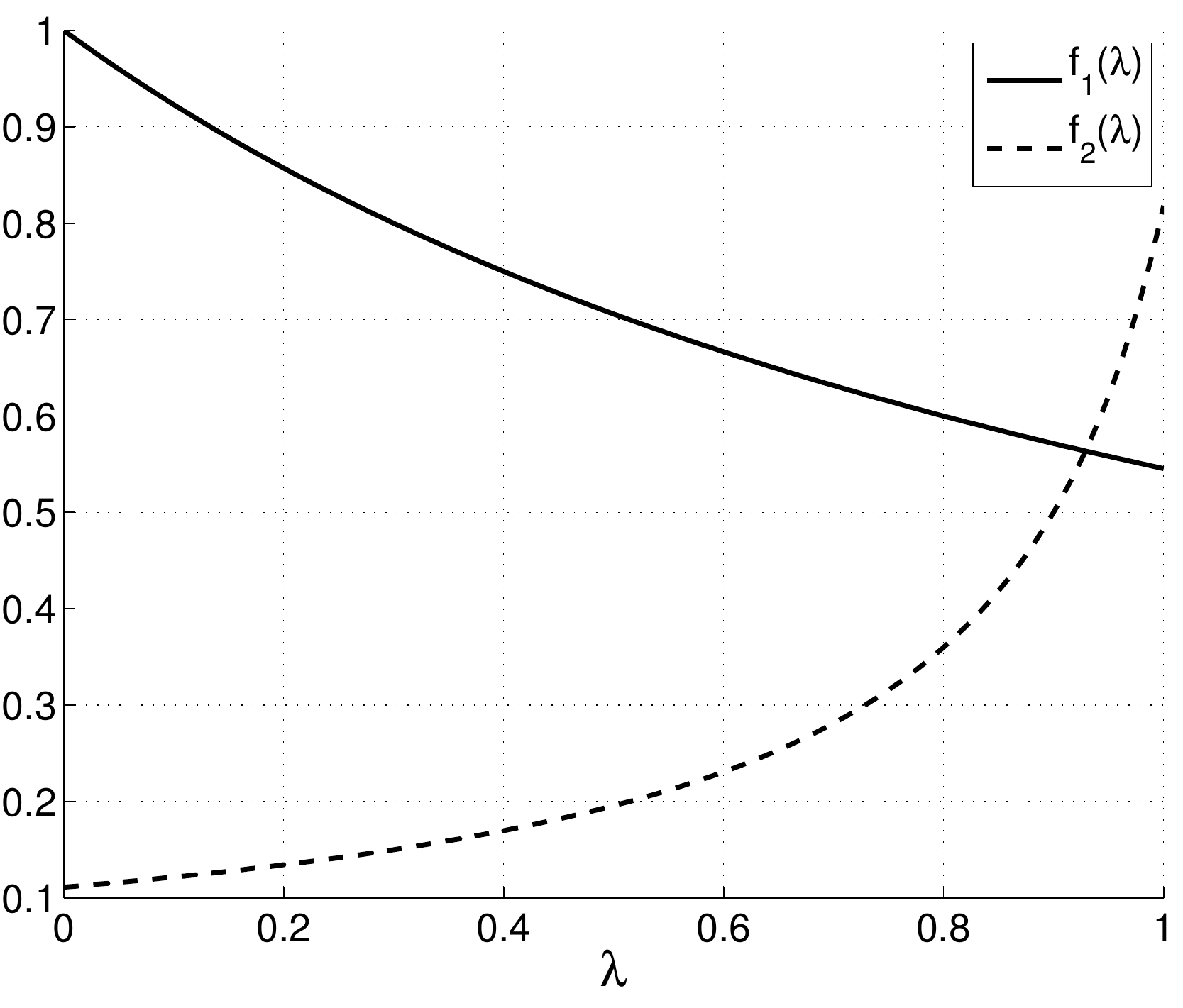}
  \caption{Illustration of the optimization problem
      \eqref{eq:p_bound_semiopt}. As observed in the proof,
      $f_{1}$ is decreasing with $\lambda$ while
      $f_2$ is increasing with
      $\lambda$.  \emph{Left}: case 1 of the proof: the minmax point is
      $\lambda=1$; \emph{Right}: case 2 of the proof: the minmax point is
      $\lambda<1$.  }
  \label{fig:main_theo_cartoon}
\end{figure}

\section{Analysis of Example~\ref{ex:logunif}}\label{s:example_analysis}
In this section, we determine the value of $s_{\max}$ and the sequence
$\{p_s\}_{s=0}^{s_{\max}}$ for
Example~\ref{ex:logunif}.
Since we will work with localized quantities of interest
far from the boundary, cf. equation \eqref{eq:qoi_def} written below, we believe that the effect of the boundary is
negligible and the regularity $s_{\max}$ is mainly limited by the summability
properties of $\kappa$. %
See Appendix~\ref{sec:app-regularity} for a
slightly modified problem on the same domain where we can prove that the regularity
is only limited by the summability properties of $\kappa$. Let us define the following family of auxiliary functions,
\[
\Upsilon_{\vec k, \vec \ell}(\vec x) = \prod_{i=1}^\exmpld
    \left(\cos\left({\pi }  k_i  x_i \right)\right)^{\ell_i}
    \left(\sin\left({\pi }  k_i  x_i \right)\right)^{1-\ell_i},
\]
so that $\kappa$ from \eqref{eq:our_matern} can be written as
\begin{align*}
\kappa(\vec x, \vec y ) &=
\sum_{\vec k \in \nset^\exmpld} A_{\vec k}
    \sum_{\vec \ell \in \{0,1\}^\exmpld} y_{\vec k,\vec \ell }
                          \Upsilon_{\vec k, \vec \ell}  (\vec x)
  \\
  &= \sum_{j=0}^\infty \sum_{\left\{\vec k \in \nset^\exmpld \::\: |\vec k| =
    j\right\}} A_{\vec k}
    \sum_{\vec \ell \in \{0,1\}^\exmpld} y_{\vec k,\vec \ell }
    \Upsilon_{\vec k, \vec \ell} (\vec x).
 \end{align*}
 Based on this expression, for $s \geq 0$, we analyze the summability
 of
 $\left\{A_{\vec k} \|D^{\vec s}\Upsilon_{\vec k, \vec
     \ell}\|_{L^\infty(\mathscr B)} \right\}$ for $|\vec s| = s$ to
 determine the permissible values of $p_s$. First, for $|\vec s| = s$,
 observe that for a constant $c$ independent of $\vec k$ we have
\[  \| D^{\vec s} \Upsilon_{\vec k, \vec \ell} (\vec x)\|_{L^{\infty}(\mathscr
    B)} = \prod_{j=1}^\exmpld  \left({\pi}k_j\right)^{s_j} \leq c |\vec k|^s. \]
Then for all $s \geq 0$, we have
\[
  \aligned \sum_{j=0}^\infty \sum_{\left\{\vec k \in \nset^\exmpld \::\:
      |\vec k| = j\right\}}\sum_{\vec \ell \in \{0,1\}^\exmpld} A_{\vec
    k}^{p_s} \|D^{\vec s}\Upsilon_{\vec k, \vec \ell}\|_{L^\infty(\mathscr
    B)}^{p_s} &\leq c 2^{\exmpld} \sum_{j=0}^\infty \sum_{\left\{\vec k \in
      \nset^\exmpld \::\: |\vec k| = j\right\}} 2^{p_s {\frac{|\vec
        k|_0}{2}}} |\vec k|^{p_s s} (1 + |\vec
  k|^2)^{-\frac{p_s\left(\nu+\frac{\exmpld}{2}\right)}{2}} \\
  &\leq c 2^{\exmpld} + c 2^{\exmpld+p_s\frac{\exmpld}{2}}\sum_{j=1}^\infty \sum_{\left\{\vec k \in
      \nset^\exmpld \::\: |\vec k| =
      j\right\}} j^{-p_s\left(\nu+\frac{\exmpld}{2} - s\right)} \\
  &= c 2^{\exmpld} + c \frac{2^{\exmpld+p_s\frac{\exmpld}{2}}}{(\exmpld-1)!} \sum_{j=1}^\infty
  j^{-p_s\left(\nu+\frac{\exmpld}{2} - s\right)}
  \prod_{i=1}^{\exmpld-1} (j+i) \\
  &= c2^{\exmpld} + c \frac{2^{\exmpld+p_s\frac{\exmpld}{2}}}{(\exmpld-1)!} \sum_{j=1}^\infty
  j^{-p_s\left(\nu+\frac{\exmpld}{2} - s\right) + \exmpld-1}
  \left(1+\frac{\exmpld-1}{j}\right)^{\exmpld-1} \\
  &\leq c2^{\exmpld} + \frac{c2^{\exmpld+p_s\frac{\exmpld}{2}} \exmpld^{\exmpld-1}}{(\exmpld-1)!} \sum_{j=1}^\infty
  j^{-p_s\left(\nu+\frac{\exmpld}{2} - s\right) + \exmpld-1}.
\endaligned
\]
From here, we obtain the bound
\begin{equation}
\label{eq:sum1_p}
 p_s > \left(\frac{\nu}{\exmpld}+\frac{1}{2} - \frac{s}{\exmpld}\right)^{-1},
\end{equation}
for all $s \geq 0$. Moreover, imposing $p_0 < \frac{1}{2}$ and $p_{s_{\max}} < \frac{1}{2}$ gives the bounds
\begin{equation}
\label{eq:nu_s_bounds}
 \nu > \frac{3\exmpld}{2} \qquad \text{ and }\ \qquad s_{\max} < \nu -
  \frac{3\exmpld}{2},
\end{equation}
respectively.  Since
$\Upsilon_{\vec k, \vec \ell} \in C^{\infty}(\mathscr B)$, the bounds
in \eqref{eq:nu_s_bounds} are the only bounds on the value of
$s_{\max}$.  To determine an upper bound on the value of
$r_{\text{MISC}}$, up to a small $\delta$,
we set $\gamma_1=\ldots=\gamma_D=\gamma$ (motivated by the fact that all subdomains $\mathscr{B}_i$ are equal),
we substitute $r_{\text{FEM}, i}(s) = \min(1, \frac{s}{\exmpld})$ for $i=1,\ldots,d$
and the lower bound of $ p_s$ in Theorem~\ref{theo:MISC_conv} and
obtain after simplifying
\[ \aligned
r_{\mathrm{MISC}} &= \max_{s=0,\ldots,s_{\max}} \begin{cases}
\frac{\min(1, \frac{s}{d})}{\gamma} &
\text{ if } \frac{\min(1, \frac{s}{d})}{\gamma} \leq \frac{\nu}{d} - \frac{3}{2} -\frac{s}{d},\\
\left(\frac{\nu}{d} - \frac{3}{2} \right) \left( 1 +
   \frac{\gamma}{\min(1, \frac{s}{d})}\frac{s}{d}
    \right)^{-1} &
\text{ if } \frac{\min(1, \frac{s}{d})}{\gamma} \geq \frac{\nu}{d} - \frac{3}{2}
-\frac{s}{d},
\end{cases}\\
&=\max_{s=0,\ldots,s_{\max}}
\begin{cases}
  \frac{s}{d \gamma} & \text{ if } \frac{s}{d \gamma} \leq \frac{\nu}{d} -
  \frac{3}{2}
  -\frac{s}{d} \text{ and } s \leq d,\\
  \left(\frac{\nu}{d} - \frac{3}{2} \right) \left( 1 +
    \gamma \right)^{-1} & \text{ if }
\frac{s}{d \gamma} \geq \frac{\nu}{d} -
  \frac{3}{2}
  -\frac{s}{d} \text{ and } s \leq d,\\
  \frac{1}{\gamma} & \text{ if } \frac{1}{\gamma} \leq \frac{\nu}{d} - \frac{3}{2}
  -\frac{s}{d} \text{ and } s \geq d, \\
  \left(\frac{\nu}{d} - \frac{3}{2} \right) \left( 1 +
    \frac{s \gamma}{d} \right)^{-1} &\text{ if }
\frac{1}{\gamma} \geq \frac{\nu}{d} -
  \frac{3}{2}
  -\frac{s}{d} \text{ and } s \geq d.
\end{cases}
\endaligned
\]
Before continuing, we discuss the four branches of the previous
expression.  If $s_{\max} \leq d$, then only the first two branches
are valid. Since the rates in these two branches increase with $s$,
the maximum is achieved for $s=s_{\max}$. If $s_{\max} \geq d$,
then, since the rates in the third and fourth branches decrease with $s$, the
maximum is achieved for $s=d$. Hence
\[\aligned
  r_{\mathrm{MISC}} &=
\begin{cases}
  \frac{s_{\max}}{d \gamma} & \text{ if } \frac{s_{\max} (1+\gamma)}{d \gamma} \leq \frac{\nu}{d} -
  \frac{3}{2} \text{ and } s_{\max} \leq d  ,\\
  \left(\frac{\nu}{d} - \frac{3}{2} \right) \left( 1 + \gamma
  \right)^{-1} & \text{ if } \frac{s_{\max} (1+\gamma)}{d \gamma} \geq
  \frac{\nu}{d} - \frac{3}{2}
  \text{ and } s_{\max} \leq d , \\
  \frac{1}{\gamma} & \text{ if } \frac{1}{\gamma} \leq \frac{\nu}{d} -
  \frac{3}{2}
  -1\text{ and } s_{\max} \geq d, \\
  \left(\frac{\nu}{d} - \frac{3}{2} \right) \left( 1 + \gamma
  \right)^{-1} & \text{ if } \frac{1}{\gamma} \geq \frac{\nu}{d} -
  \frac{3}{2} -1 \text{ and } s_{\max} \geq d,
\end{cases}\\
&=
\begin{cases}
  \frac{1}{\gamma}\left(\frac{\nu}{d} - \frac{3}{2}\right) &\text{ if
  }
  \frac{1}{\gamma} \leq 0 \text{ and } \frac{\nu}{d} \leq \frac{5}{2}  ,\\
  \left(\frac{\nu}{d} - \frac{3}{2} \right) \left( 1 + \gamma
  \right)^{-1} &\text{ if } \frac{1}{\gamma} \geq 0
  \text{ and } \frac{\nu}{d} \leq \frac{5}{2} , \\
  \frac{1}{\gamma} &\text{ if } \ \frac{1}{\gamma} \leq \frac{\nu}{d}
  - \frac{5}{2}
  \text{ and } \frac{\nu}{d} \geq \frac{5}{2}, \\
  \left(\frac{\nu}{d} - \frac{3}{2} \right) \left( 1 + \gamma
  \right)^{-1} &\text{ if } \frac{1}{\gamma} \geq \frac{\nu}{d} -
  \frac{5}{2} \text{ and } \frac{\nu}{d} \geq \frac{5}{2},
\end{cases}\\
&=
\begin{cases}
  \gamma^{-1} & \text{ if } \
  \frac{\nu}{d} \geq \frac{1}{\gamma}  + \frac{5}{2}, \\
   \left(\frac{\nu}{d} - \frac{3}{2}
  \right)\left( 1 + \gamma \right)^{-1} & \text{ if } \frac{\nu}{d} \leq
  \frac{1}{\gamma} + \frac{5}{2}.
\end{cases}
\endaligned
\]
In Figure~\ref{fig:theory_work_rates}, we plot the upper bound of the
rate of MISC work complexity, $r_{\text{MISC}}$, based on
Theorem~\ref{theo:MISC_conv} and the following analysis variants:
\begin{description}
\item[\textbf{Theory.}] This is based on the summability properties of
  $\left\{A_{\vec k} \|D^{\vec s}\Upsilon_{\vec k, \vec
      \ell}\|_{L^\infty(\mathscr B)} \right\}$.  We also use the value
  $r_{\text{FEM},i}(s) = 2 \min\left(1, \frac{s}{\exmpld}\right)$ for
  all $i=1,\ldots,d$. This is motivated by the fact that we expect to
  double the convergence rate of the underlying FEM method since we
  are considering convergence of a smooth linear functional of the solution.
\item[\textbf{Square summability.}] Motivated by the arguments in
  Lemma~\ref{lem:Wa_inf_bound} in the appendix, we believe that our
  results may be improved by instead considering the summability
  properties of
  $\left\{A_{\vec k}^2 \|D^{\vec s}\Upsilon_{\vec k, \vec
      \ell}\|_{L^\infty(\mathscr B)}^2 \right\}$ for
  $|\vec s| \leq s$. Similar calculations yield the bounds
\begin{equation}
\label{eq:sum2_p}
p_s > \left(\frac{2\nu}{\exmpld}+1 - \frac{2s}{\exmpld}\right)^{-1},
\end{equation}
and the corresponding conditions, $\nu > \frac{\exmpld}{2}$ and
$s_{\max} < \nu - \frac{\exmpld}{2}$.
\item[\textbf{Improved.}] As mentioned in
  Remark~\ref{rem:keep_it_simple}, we could in principle make our
  results sharper by taking $\tilde{p}_s=p_s$ instead of
  $\tilde{p}_s=\frac{p_s}{1-p_s}$.  The modifications of
  Theorem~\ref{theo:MISC_conv} to account for these rates are
  straightforward. Moreover, when considering square summability, the
  conditions become $\nu > 0$ and $s_{\max} < \nu$.
\end{description}

We also include in Figure~\ref{fig:theory_work_rates} the observed
convergence rates of MISC when applied to the example with different
values of $\nu$, as discussed below in Section~\ref{s:numerics}, and
the observed convergence rate of MISC when applied to the same
problem with no random variables and a constant diffusion
coefficient, $a$. In the latter case, MISC reduces to a
deterministic combination technique
\cite{Bungartz.Griebel.Roschke.ea:pointwise.conv}. Note that the
rate of MISC with no random variables is an upper bound for the
convergence rate of MISC with any $\nu>0$.

From this figure, we can clearly see that the predicted rates in our
theory are pessimistic when compared to the observed rates and that
the suggested analysis of using the square summability or using the
improved rates, $\tilde{p}_s$, might yield sharper bounds for the
predicted work rates.  On the other hand, we know from our previous
work \cite[Theorem 1]{hajiali.eal:MISC1} that the work degrades with
increasing $\exmpld$ with a log factor and in fact the expected work
rate for maximum regularity when the number of random variables is
finite is of $\Order{W_{\max}^{-2} \log(W_{\max})^{\exmpld-1}}$. This
can be seen Figure~\ref{fig:theory_work_rates} and $\exmpld=3$, since
in this case the observed work rate seems to be converging to a value
less that $2$.

\begin{figure}
  \centering
  \includegraphics[page=1, scale=0.36]{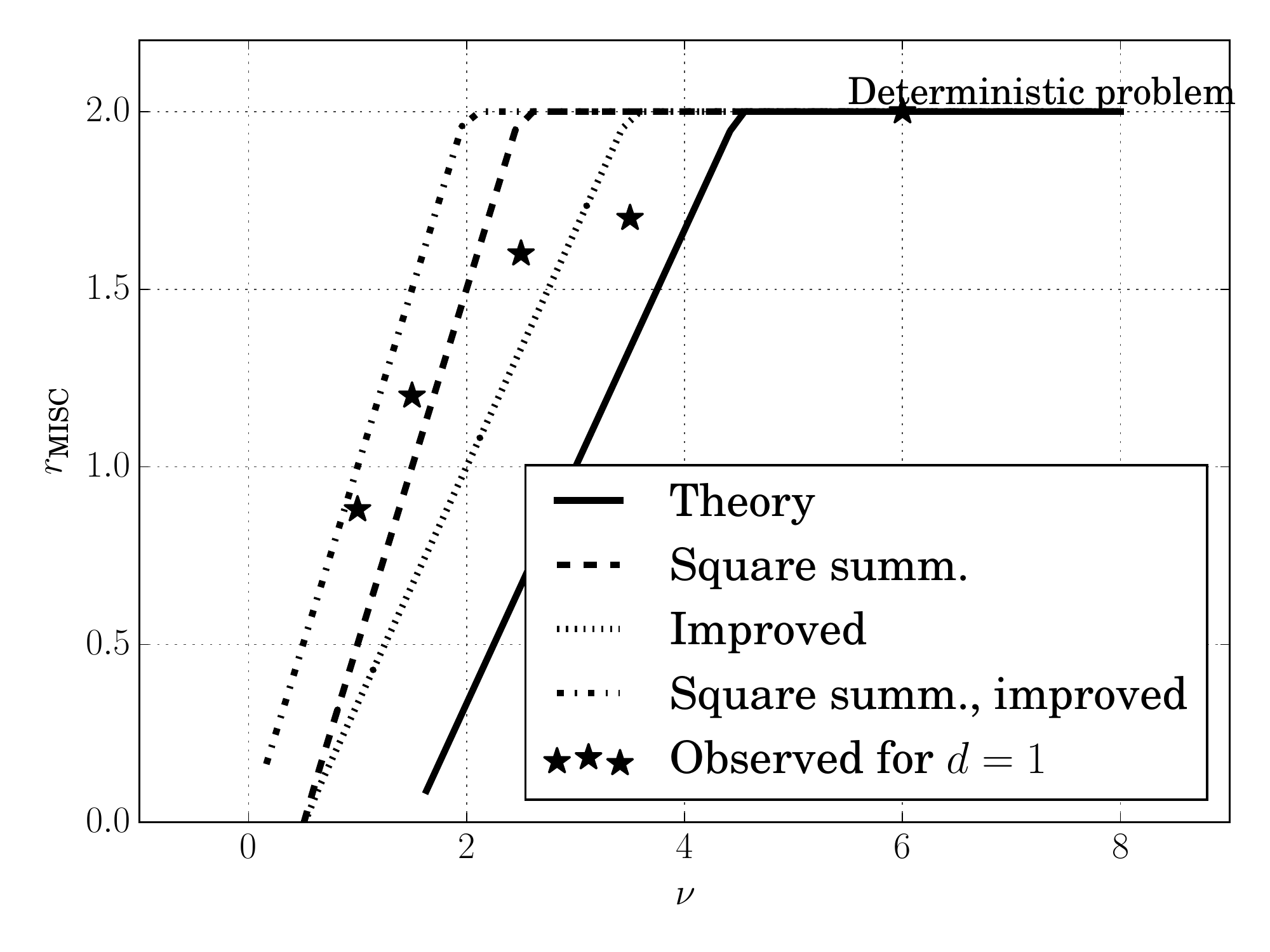}
  \includegraphics[page=2, scale=0.36]{rates.pdf}
  \caption{The upper bound of the MISC rate, $r_{\text{MISC}}$, as
    predicted in Theorem~\ref{theo:MISC_conv} versus the observed
    rates when running the example detailed in
    Section~\ref{s:numerics}.  Refer to
    Section~\ref{s:example_analysis} for an explanation of the
    different curves. Also included are the observed convergence rates
    for a few values of $\nu$ and the observed convergence rate of
    MISC with no random variable and constant diffusion coefficient,
    $a$, as a \emph{horizontal} line. The latter is referred to as
    the ``deterministic problem'' and shows more clearly the
        effect of the logarithmic factor in the work for
    $\exmpld>1$, as shown in Figure~\ref{fig:conv_results_det} and
    proved in \cite[Theorem 1]{hajiali.eal:MISC1}.}
  \label{fig:theory_work_rates}
\end{figure}

\section{Numerical experiments}\label{s:numerics}
We now verify the effectiveness of the MISC approximation on some
instances of the general elliptic equation \eqref{eq:PDE1}, as well as
the validity of the convergence analysis detailed in the previous
sections.  In particular, we consider the domain
$\mathcal B = [0,1]^d$ and the family of random diffusion coefficients
specified in Example~\ref{ex:logunif}.
In more detail, we consider a problem with one physical dimension ($\exmpld=1$)
and another with three dimensions ($\exmpld=3$);
in both cases, we set  $\forcing(\xx)=1$,
and model the diffusion coefficient by the expansion \eqref{eq:our_matern} with different
values of $\nu$.
Finally, the quantity of interest is a local average defined as
\begin{equation}\label{eq:qoi_def}
  \qoi(\yy) =  \frac{10}{(\sigma\sqrt{2\pi})^\exmpld} \int_\mathscr{B} u(\xx,\yy)\exp \left( -\frac{\|\xx-\xx_0\|_2^2}{2\sigma^2} \right) d\xx
\end{equation}
with $\sigma=0.2$ and location $\xx_0=0.3$ for $\exmpld=1$ and
$\xx_0= [0.3, 0.2,0.6]$ for $\exmpld=3$.
The deterministic problems are discretized with a second-order
centered finite differences scheme for which we expect to recover the
same rate in the numerical experiments that we would obtain with
piece-wise multi-linear finite elements on a structured mesh. We choose
the mesh-sizes in \eqref{eq:choice_of_h_and_FEM} and
$h_{0,i}=1/3$ for all $i=1,\ldots,d$, and the resulting linear system is solved with GMRES with
sufficient accuracy. With these values and using the coarsest
discretization, $h_0=1/3$, in all dimensions, the coefficient of
variation of the quantity of interest can be approximated to be
between $90\%$ and $110\%$ depending on the number of dimensions,
$\exmpld$, and the particular value of the parameter, $\nu$, that we
consider below. Finally, the quadrature points on the stochastic
domain are the already-mentioned Clenshaw-Curtis points (see
eq. \eqref{eq:CC_def} and \eqref{eq:m_CC}).

In the plots below, the computational work is compared in terms of the
total number of degrees of freedom to avoid discrepancies in running
time due to implementation details, i.e., using \eqref{eq:work_decomp}
and Assumption~\ref{assump:work}. Moreover, we set $\gamma=1$ in
\eqref{eq:dW_det_assump}, which is motivated by the fact that, for the
tolerances we are interested in, we estimate that the cost of solving
a linear system with GMRES is linear with respect to the number of
degrees of freedom.

In order to evaluate the MISC estimator, we need to build the index
set \eqref{eq:opt_set}.  To do that, we must be able to evaluate two
quantities for every $\valpha$ and $\vbeta$: the work contribution,
$\Delta W_{\valpha, \vbeta}$, and the error contribution,
$\Delta E_{\valpha, \vbeta}$. Evaluating the work contribution is
straightforward thanks to Assumption~\ref{assump:work} and using
$\gamma=1$. On the other hand, evaluating the error contribution is
more involved. We look at two options:
\begin{description}
\item[\textbf{``brute-force'' evaluation.}] We compute
  $\vec \Delta[\descqoi\valpha\vbeta]$ for all $(\valpha, \vbeta)$
  within some ``universe'' index set and set
  $\Delta E_{\valpha, \vbeta} = \left| \vec
    \Delta[\descqoi\valpha\vbeta] \right|$.
  Notice that this method is not practical since the cost of
  constructing the set, $\mathcal I$, would far dominate the cost of
  the MISC estimator. However, within some ``universe'' index
set,
  this method would produce the best possible convergence and serve as
  a benchmark for other MISC sets within that universe.
\item[\textbf{``a-priori'' evaluation.}] We use
  Lemma~\ref{lemma:error} to bound $\Delta E_{\valpha, \vbeta}$. Using
  these bounds instead of exact values produces quasi-optimal index
  sets
  (cf. \cite{back.nobile.eal:optimal,nobile.eal:optimal-sparse-grids}
  ). This method in turn requires the estimation of the parameters
  $r_{\text{FEM}}, \left\{g_{s, j}\right\}_{j \geq 1}$ for all
  $s=0,\ldots,s_{\max}$. Since we use a second-order centered finite
  differences scheme and consider the convergence of a quantity of
  interest, we expect $r_{\text{FEM}}=2 \min\left(1, \frac{\nu}{d}\right)$
  as motivated by the ``improved'' analysis in the
  previous section and considering the summability properties of
  $\left\{A_{\vec k}^2 \|D^{\vec s}\Upsilon_{\vec k, \vec\ell}\|_{L^\infty(\mathscr B)}^2 \right\}$.
  This can also be validated numerically in the usual way by fixing all random
  variables to their expected value and checking the decay of
  $\Delta E_{\valpha, \vec 1}$ with respect to $\valpha$.

  On the other hand, estimating $\left\{g_{s, j}\right\}_{j \geq 1}$
  for $s=0,\ldots,s_{\max}$ is more difficult since, in principle, we
  do not know a priori, for a given $\valpha$ and $\vbeta$, which
  value of $s \in \{0,1,\ldots,s_{\max}\}$ yields the smallest
  estimate of $\Delta E_{\valpha, \vbeta}$.  Instead, we use a
  ``simplified'' model that was used in~\cite{hajiali.eal:MISC1}:
\begin{equation}\label{eq:deltaE_simplified}
  \Delta E_{\valpha,\vbeta} \leq C e^{-\sum_{j\geq 1} m(\beta_j-1)\tilde g_{j}} 2^{-|\valpha|r_{\mathrm{FEM}}},
\end{equation}
where $\tilde g_j$ is some unknown function of $g_{s, j}$ for all
$s = 0,1,\ldots, s_{\max}$. $\tilde g_j$ can be estimated given
$r_{\text{FEM}}$ and a set of evaluations of
$\left| \vec \Delta[\descqoi\valpha\vbeta] \right|$ for some
$(\valpha, \vbeta) \in \mathcal I^*$ by solving a least-squares problem
to fit the linear model
\[ \sum_{j\geq 1} \tilde g_j m(\beta_j-1) = -\log\left(\left| \vec
      \Delta[\descqoi\valpha\vbeta] \right|\right) - |\valpha|
  r_{\text{FEM}},\qquad \text{for all } (\valpha, \vbeta) \in \mathcal
  I^*.\] For our example, these rates are plotted in
Figures~\ref{fig:deltaE_numerical_D1}(a)
and~\ref{fig:deltaE_numerical_D3}(a) for $\exmpld=1$ and $\exmpld=3$,
respectively. In our current implementation, the construction of the
optimal MISC set, $\mathcal I$, is separate from the set
$\mathcal I^*$.  However, it is possible in principle to construct an
algorithm in which the optimal MISC set, $\mathcal I$, is constructed
iteratively by alternating between estimating rates given a set of
indices and evaluating the MISC estimator.
\end{description}

Note that, in the current work, there are certain operations whose
costs we do not track or compare. The first operation is the
estimation of the stochastic rates,
$\left\{\tilde g_j\right\}_{j\geq 1}$. The second operation is the
construction of the optimal set given estimates of error and work
contribution. We believe that the cost of these two operations can be
reduced by using the previously mentioned iterative algorithm. The
cost of these operations is thus dominated by the cost of evaluating
the MISC estimator. The third operation is the assembly of the
stiffness matrix, especially since it scales linearly with the number
of random variables. While the cost of this operation is relevant to
our discussion, it is usually dominated by the cost of the linear
solver, at least for fine-enough discretizations.

Finally, we also compare MISC to the Multi-index Monte Carlo (MIMC)
method as detailed in \cite{abdullatif.etal:MultiIndexMC}, for which
$\Order{W_{\max}^{-0.5}}$ convergence can be proved for
Example~\ref{ex:logunif} with $\gamma=1, \exmpld \leq 3$ and
sufficiently large $\nu$ (see Appendix~\ref{sec:app-summability}).
Moreover, when computing errors, we use the result obtained using a
well-resolved MISC solution as a reference value.

Figures~\ref{fig:deltaE_numerical_D1}(b--d) and
Figures~\ref{fig:deltaE_numerical_D3}(b--d) compare some computed
values of $\left| \vec \Delta[\descqoi\valpha\vbeta] \right|$ versus
the model \eqref{eq:deltaE_simplified} using the estimated rates
$r_{\text{FEM}}=2 \min\left(1, \frac{\nu}{d}\right)$ and
$\left\{\tilde g_j\right\}_{j\geq 1}$. These plots show that the
model~\eqref{eq:deltaE_simplified} is a good fit for the case
$\exmpld=1, \nu=2.5$ and $\exmpld=3, \nu=4.5$. Moreover, similar plots
were produced for other values of $\exmpld$ and $\nu$ that are not
reported here but also show good fit. Figures~\ref{fig:extremes_D1}
and~\ref{fig:extremes_D3} show
\begin{itemize}
\item the maximum space discretization level,
$\max_{(\valpha, \vbeta) \in \mathcal I} \max(\valpha)$,
\item  the maximum
quadrature level,
$\max_{(\valpha, \vbeta) \in \mathcal I} \max(\vbeta)$,
\item the index of
the last activated random variable,
$\max_{(\valpha, \vbeta) \in \mathcal I} \max_{\beta_j > 1} j$,
\item and the maximum number of jointly activated variables,
  $\max_{(\valpha, \vbeta) \in \mathcal I} |\vbeta-\vec 1|_0$.
\end{itemize}
These values
convey the size of the used index set, $\mathcal I$, for different
values of $W_{\max}$.

Figure~\ref{fig:theory_work_rates} shows the observed convergence
rates of MISC vs MIMC for the cases $\exmpld=1$ and $\exmpld=3$ and
different values of $\nu$. This figure shows that the observed rates
are better than those predicted by the theory developed in this work,
which suggests that further improvement in the theory is possible (see
Remark~\ref{rem:keep_it_simple}). Figures~\ref{fig:conv_results_det}
and~\ref{fig:conv_results} show in greater details the observed
convergence curves for $\exmpld=1, \nu=2.5$ and $\exmpld=3, \nu=4.5$
and their respective linear fit in log-log scale.

We recall that, as shown in \cite[Theorem 1]{hajiali.eal:MISC1}, the
convergence rate of MISC with a finite number of random variables is
$\Order{W_{\max}^{-2} \log(W_{\max})^{\exmpld-1}}$. Compare this to the
theory presented here that predicts, as $\nu \to \infty$, a
convergence of $\Order{W_{\max}^{-2+\epsilon}}$ for any
$\epsilon>0$. However, Figure~\ref{fig:conv_results_det} shows that
even for a problem with $\exmpld=3$ and no random variables, MISC (which, in
this case, becomes equivalent to a deterministic combination technique
\cite{Bungartz.Griebel.Roschke.ea:pointwise.conv}) has an observed
convergence rate that is closer to $-1.38$. This is due to the
effect of the logarithmic term that is nonzero for
$\exmpld>1$. Based on this, we should not expect a better convergence rate
for $\exmpld=3$ and any finite $\nu > 0$. This is also numerically validated in
Figure~\ref{fig:conv_results}, which shows the full convergence curves
for $\exmpld=1, \nu=2.5$ and $\exmpld=3, \nu=4.5$.

\begin{figure}%
  \centering \subfigure[]
  {\includegraphics[page=1,width=0.49\textwidth]{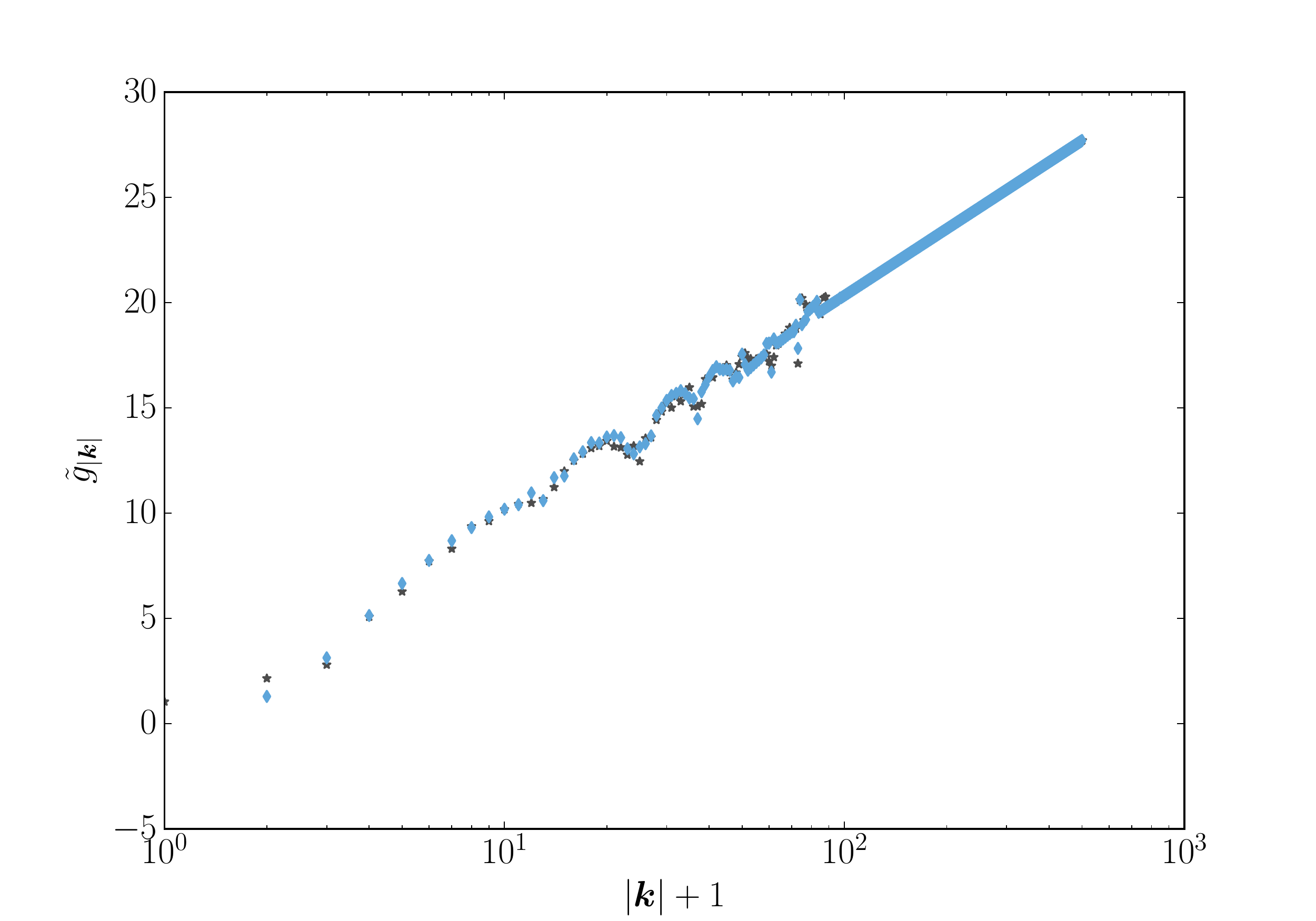}}
  \subfigure[]
  {\includegraphics[page=2,width=0.49\textwidth]{EXAMPLE_1_nu2_5_d1_rates.pdf}}
  \\
  \subfigure[]
  {\includegraphics[page=3,width=0.49\textwidth]{EXAMPLE_1_nu2_5_d1_rates.pdf}}
  \subfigure[]
  {\includegraphics[page=4,width=0.49\textwidth]{EXAMPLE_1_nu2_5_d1_rates.pdf}}
  \caption{Example~\ref{ex:logunif}, $\exmpld=1$ and
    $\nu=2.5$. {\emph{(a)}} A plot of the estimated stochastic rates,
    $\tilde g_{j}$, that are used
    in~\eqref{eq:deltaE_simplified}. Different markers correspond to
    different modes multiplying the same value of $A_{\vec k}$.
    {\emph{(b--d)}} {\emph{solid}} lines show the computed
    approximations of
    $\Delta E_{\vec 1, \vec 1 + j \tilde \vbeta}, \Delta E_{\vec 1 + j
      \tilde \valpha, \vec 1}$ and
    $\Delta E_{\vec 1 + j \tilde \valpha , \vec 1 + j \tilde \vbeta}$,
    respectively versus the model in \eqref{eq:deltaE_simplified}
    represented with
    {\emph{dashed}} lines.}
\label{fig:deltaE_numerical_D1}
\end{figure}

\begin{figure}%
  \centering
  \subfigure{\includegraphics[page=2,width=0.49\textwidth]{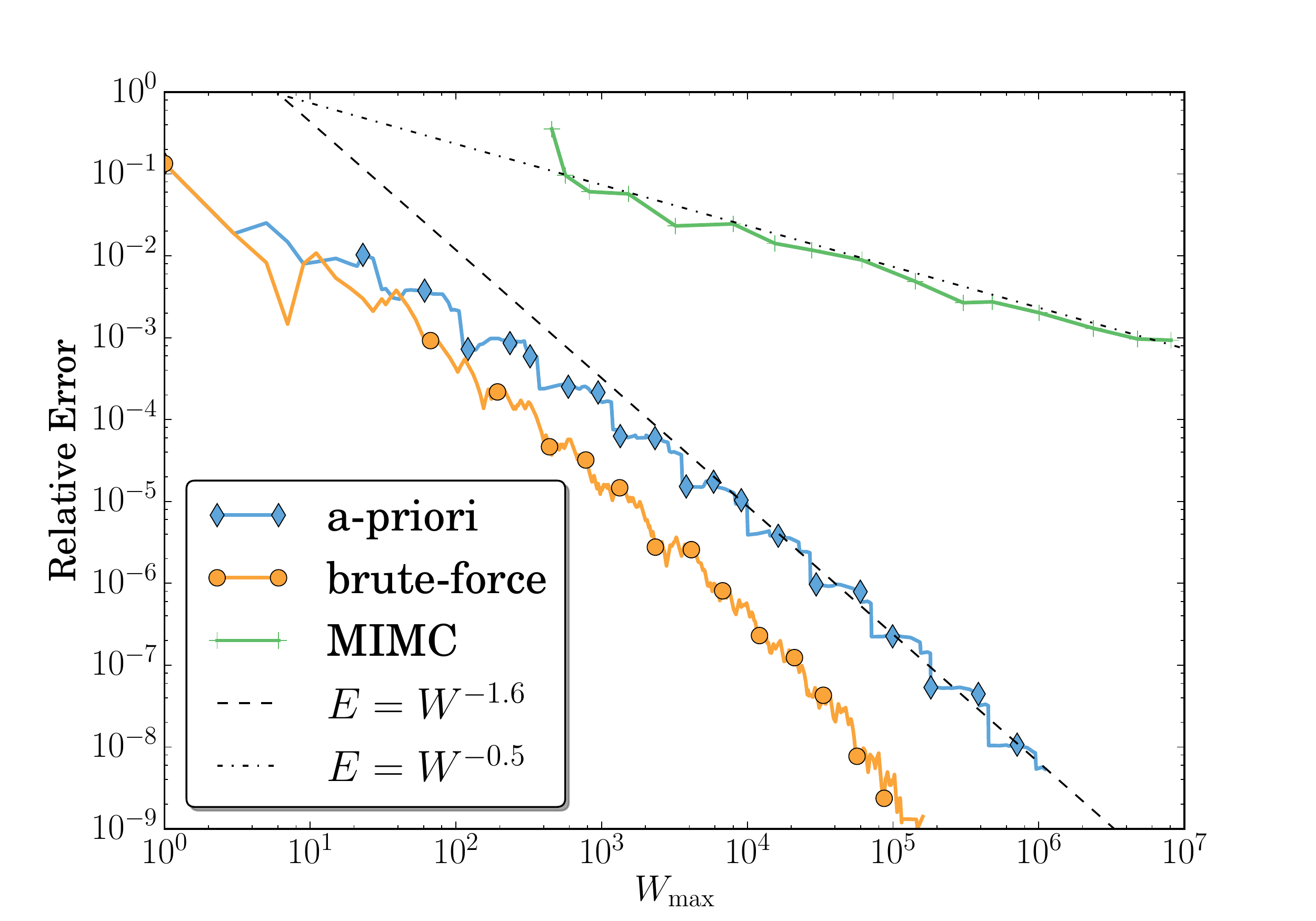}}
  \subfigure{\includegraphics[page=3,width=0.49\textwidth]{EX1_d1_nu2_5.pdf}}
\caption{Example~\ref{ex:logunif}, $\exmpld=1$ and $\nu=2.5$. This figure
  shows extreme values of $\valpha$ and $\vbeta$ included in the MISC
  set, $\mathcal{I}$. Specifically,
  the \emph{left-solid} lines is the maximum
  space discretization level,
  $\max_{(\valpha, \vbeta) \in \mathcal I} \left(\max\left(\valpha\right)\right)$,
  the \emph{left-dashed} lines are the maximum quadrature level,
  $\max_{(\valpha, \vbeta) \in \mathcal I} \left(\max\left(\vbeta\right)\right)$,
  the \emph{right-solid} lines are the index of the last activated random
  variable,
  $\max_{(\valpha, \vbeta) \in \mathcal I} \left(\max_{\beta_j > 1} j\right)$, and
  the \emph{right-dashed} lines are the maximum number of jointly activated
  variables, $\max_{(\valpha, \vbeta) \in \mathcal I}
  \left(|\vbeta-\vec 1|_0\right)$.
}
  \label{fig:extremes_D1}
\end{figure}

\begin{figure}%
  \centering
  \subfigure[]
  {\includegraphics[page=1,width=0.49\textwidth]{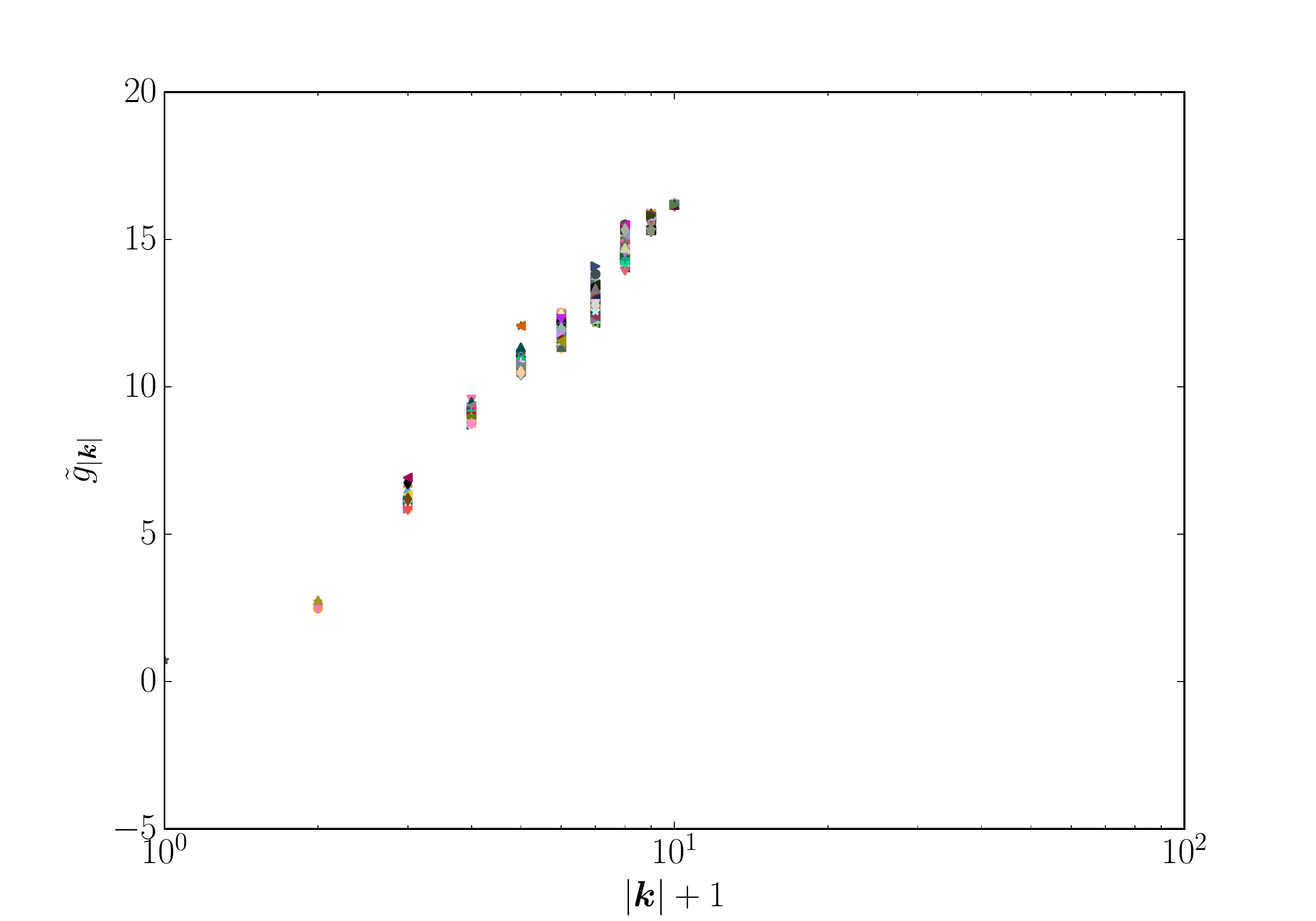}}
  \subfigure[]
  {\includegraphics[page=3,width=0.49\textwidth]{EXAMPLE_1_nu4_5_d3_rates.pdf}}
  \\
  \subfigure[]
  {\includegraphics[page=2,width=0.49\textwidth]{EXAMPLE_1_nu4_5_d3_rates.pdf}}
  \subfigure[]
  {\includegraphics[page=4,width=0.49\textwidth]{EXAMPLE_1_nu4_5_d3_rates.pdf}}
  \caption{Example~\ref{ex:logunif}, $\exmpld=3$ and $\nu=4.5$.
    {\emph{(a)}} A plot of the estimated stochastic rates,
    $\tilde g_{j}$, that are used
    in~\eqref{eq:deltaE_simplified}. Here different markers correspond
    to different modes multiplying the same value of $A_{\vec
      k}$. {\emph{(b--d)}} {\emph{solid}} lines show the computed
    approximations of
    $\Delta E_{\vec 1, \vec 1 + j \tilde \vbeta}, \Delta E_{\vec 1 + j
      \tilde \valpha, \vec 1}$ and
    $\Delta E_{\vec 1 + j \tilde \valpha , \vec 1 + j \tilde \vbeta}$,
    respectively versus the model in \eqref{eq:deltaE_simplified}
    represented with {\emph{dashed}} lines.}
\label{fig:deltaE_numerical_D3}
\end{figure}

\begin{figure}%
  \centering
  \subfigure{\includegraphics[page=2,width=0.49\textwidth]{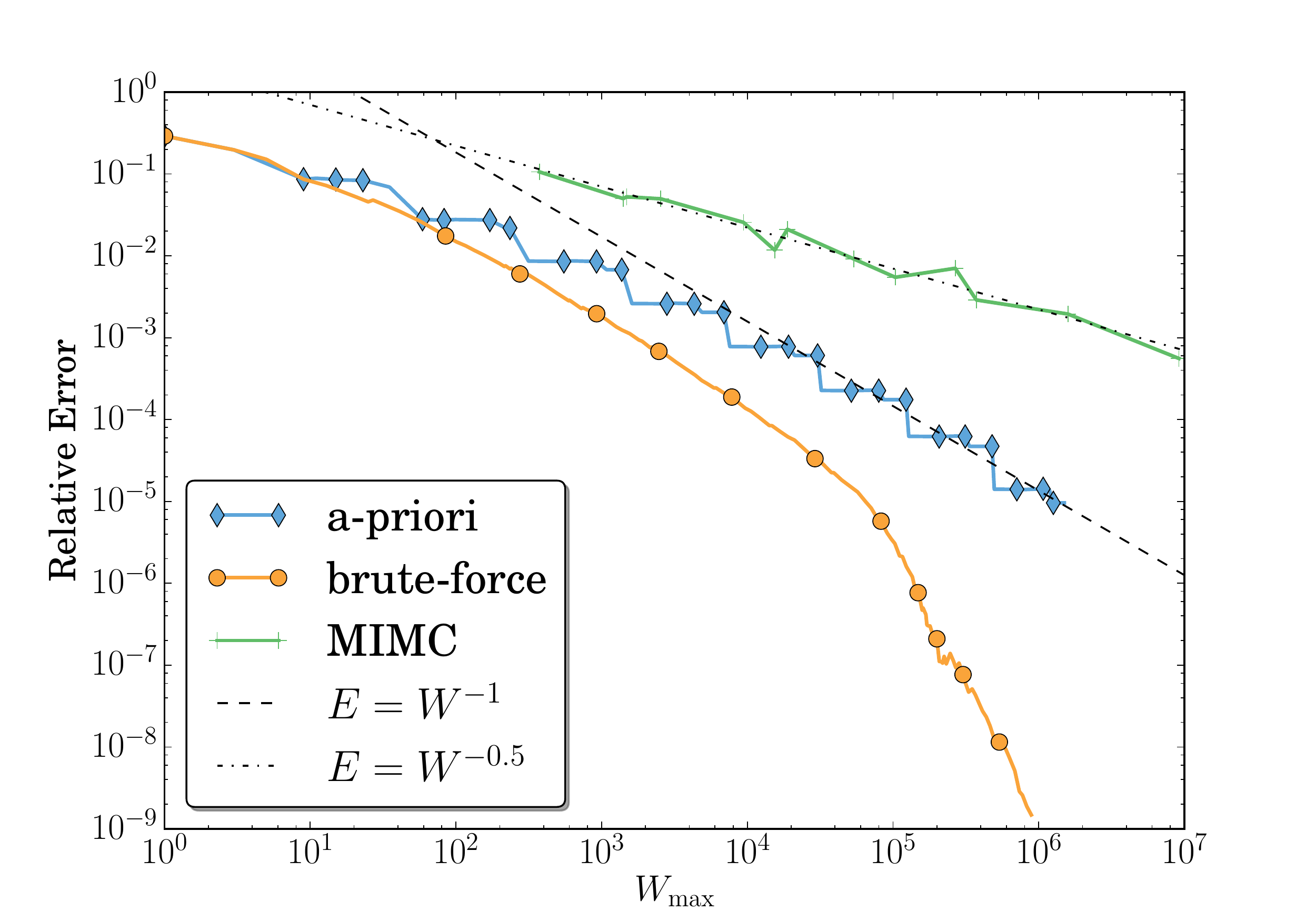}}
  \subfigure{\includegraphics[page=3,width=0.49\textwidth]{EX1_d3_nu4_5.pdf}}
\caption{Example~\ref{ex:logunif}, $\exmpld=3$ and $\nu=4.5$. This
  figure shows extreme values of $\valpha$ and $\vbeta$ included in
  the MISC set $\mathcal{I}$. Specifically,
the \emph{left-solid} lines are the maximum
  space discretization level,
  $\max_{(\valpha, \vbeta) \in \mathcal I} \left(\max\left(\valpha\right)\right)$,
  the \emph{left-dashed} are the maximum quadrature level,
  $\max_{(\valpha, \vbeta) \in \mathcal I} \left(\max\left(\vbeta\right)\right)$,
  the \emph{right-solid} are the index of the last activated random
  variable,
  $\max_{(\valpha, \vbeta) \in \mathcal I} \left(\max_{\beta_j > 1} j\right)$, and
  the \emph{right-dashed} are the maximum number of jointly activated
  variables, $\max_{(\valpha, \vbeta) \in \mathcal I}
  \left(|\vbeta-\vec 1|_0\right)$.
}
\label{fig:extremes_D3}
\end{figure}

\begin{figure}%
  \centering
  \subfigure[$\exmpld=1$]{\includegraphics[page=1,width=0.49\textwidth]{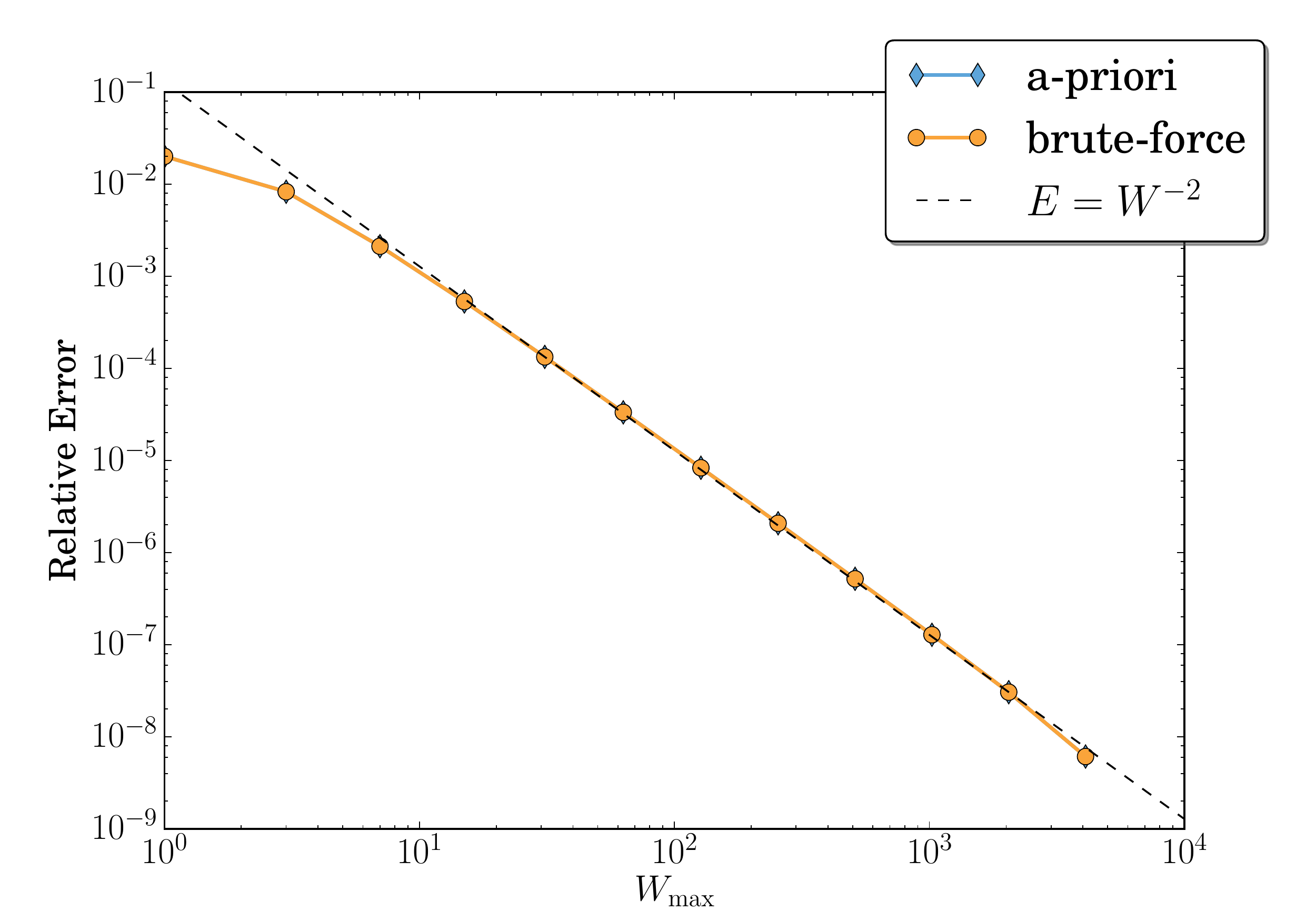}}
  \subfigure[$\exmpld=3$]{
    \includegraphics[page=1,width=0.49\textwidth]{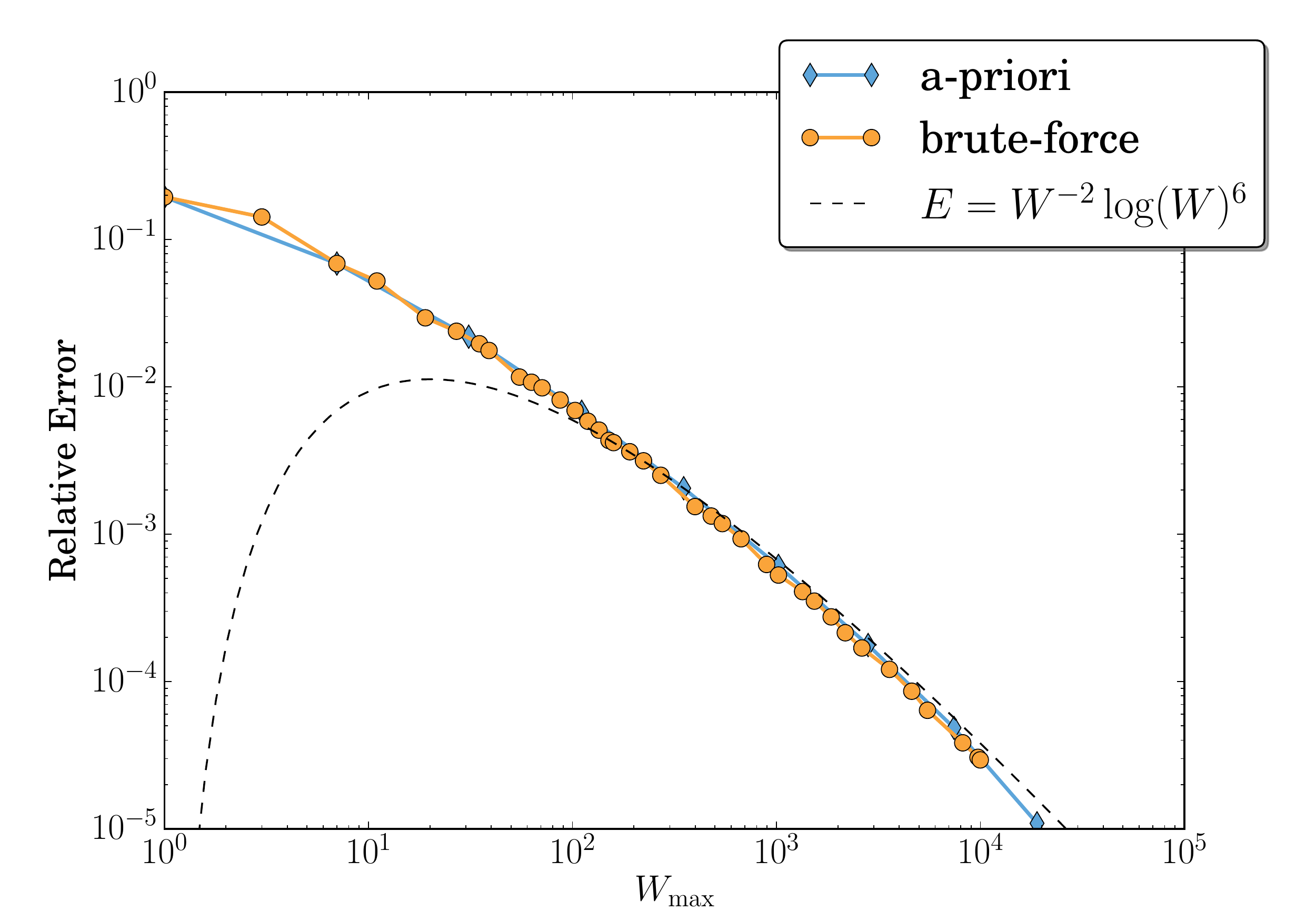}}
  \caption{Convergence results of MISC Example~\ref{ex:logunif} with a
    constant diffusion coefficient, $a$. In this case, MISC is
    equivalent to a deterministic combination technique
    \cite{Bungartz.Griebel.Roschke.ea:pointwise.conv}. These plots
    shows the non-asymptotic effect of the logarithmic factor for
    $\exmpld>1$ (as discussed in \cite[Theorem 1]{hajiali.eal:MISC1}) on the
    linear convergence fit in log-log scale.}
  \label{fig:conv_results_det}
\end{figure}

\begin{figure}
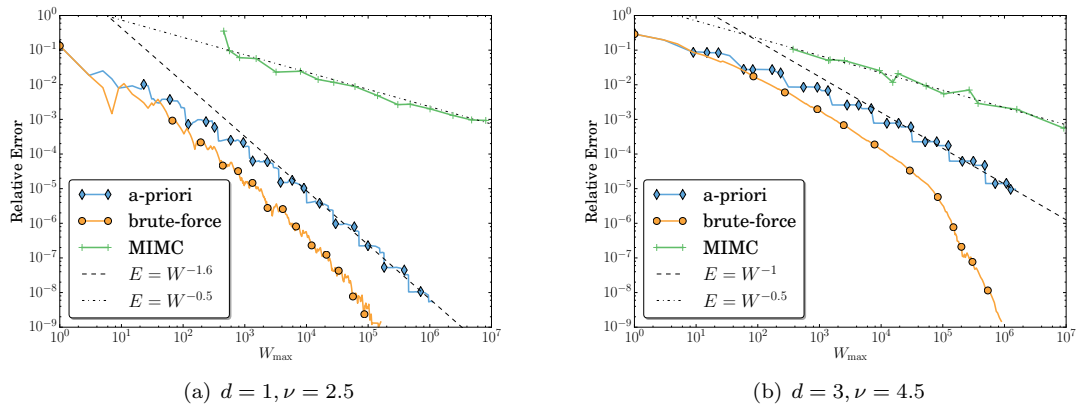
%
  \centering
  \subfigure[$\exmpld=1, \nu=2.5$]{\includegraphics[page=1,width=0.49\textwidth]{EX1_d1_nu2_5.pdf}}
  \subfigure[$\exmpld=3, \nu=4.5$]{
    \includegraphics[page=1,width=0.49\textwidth]{EX1_d3_nu4_5.pdf}}
  \caption{Convergence results of MISC vs MIMC when applied to
    Example~\ref{ex:logunif}.}
  \label{fig:conv_results}
\end{figure}

\section{Conclusions}\label{s:conclusions}
In this work, we analyzed the performance of the MISC method when
applied to linear elliptic PDEs depending on a countable sequence of
random variables. For ease of presentation, we worked on tensor
product domains, but the results can be extended to more general
domains and non-uniform meshes, as briefly mentioned
Section~\ref{s:MISC-method}. We proved a convergence result using a
summability argument that shows that, in certain cases, the
convergence of the method is essentially dictated by the convergence
properties of the deterministic solver.  We then applied the
convergence theorem to derive convergence rates for the approximation
of the expected value of a functional of the solution of an elliptic
PDE with diffusion coefficient described by a random field,
tracking the dependence of the convergence rate on the spatial
regularity of the realizations of the random field. The theoretical
findings are backed up by numerical experiments that show the
dependence of the convergence rate on the regularity parameter.
Future works includes extending the convergence analysis to
higher-order finite element solvers and improving the estimates of the
error contribution of each difference operator by taking into account
the factorial terms appearing in the estimates for the size of the
Chebyshev coefficients, cf.
\cite{back.nobile.eal:optimal,cohen.devore.schwab:nterm2}. Moreover,
the ideas in \cite{abdullatif:continuationMLMC} can be extended to
design an algorithm that iteratively estimates the optimal MISC set by
alternating between optimizing the set and evaluating the estimator to
ensure that the work to optimize the set is dominated by the work to
evaluate the MISC estimator.

\paragraph{Acknowledgement}
  F. Nobile and L. Tamellini received support from the
  Center for ADvanced MOdeling Science (CADMOS) and partial
  support by the Swiss National Science Foundation under the
  Project No. 140574 ``Efficient numerical methods for flow and
  transport phenomena in heterogeneous random porous media''.
  L. Tamellini also received support from the
    Gruppo Nazionale Calcolo Scientifico - Istituto Nazionale di Alta Matematica
    ``Francesco Severi'' (GNCS-INDAM).
  R. Tempone is a member of the KAUST Strategic Research Initiative,
  Center for Uncertainty Quantification in Computational Sciences and
  Engineering.
  R. Tempone received support from the KAUST CRG3 Award Ref: 2281.

\appendix
\section{Summability of series expansion}\label{sec:app-summability}

We start by recalling a useful multivariate Fa\`a di Bruno formula taken from \cite[Theorem 2.1]{constantine.savits:multivariate}.

\begin{lemma} Let $\mathscr{B}\subset \rset^d$ be an open domain, $g:\mathscr{B}\rightarrow \rset$  and $f:\rset\rightarrow\rset$ be functions of class $C^s$ and denote $h=f\circ g : \mathscr{B}\rightarrow \rset$. For any multi-index $\ii \in \nset^d$, $|\ii| \le s$,  and any $\xx\in\mathscr{B}$,
\begin{equation}\label{eq:faadibruno}
  D^\ii h(\xx) = \ii!\sum_{\lambda=1}^{|\ii|} f^{(\lambda)}(g(\xx)) \sum_{r=1}^{\lambda} \sum_{p_r(\ii,\lambda)}  \prod_{j=1}^r \frac{(D^{\eell_j}g(\xx))^{k_j}}{k_j! (\eell_j!)^{k_j}},
\end{equation}
holds, where
\[
  p_r(\ii,\lambda) = \{ (k_j,\eell_j)\in \nset\times \nset^d_0, \; j=1,\ldots,r: \;\; \boldsymbol{0}\prec \eell_1\prec \eell_2 \prec \cdots\prec \eell_r, \; \sum_{j=1}^r k_j = \lambda, \;\; \sum_{j=1}^r k_j\eell_j = \ii\}
\]
and $\prec$ denotes the lexicographic ordering of multi-indices. The
set $p_r(\ii,\lambda)$ denotes the set of possible decompositions of
$\ii$ as a sum of $\lambda$ multi-indices with $r\le \lambda$ distinct
multi-indices, $\eell_j$, taken with multiplicity $k_j$ such that
$\sum_{j=1}^r k_j=\lambda$.
\end{lemma}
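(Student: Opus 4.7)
The plan is to compute the Taylor expansion of $h(\xx + \vec{h})$ around $\xx$ in two different ways and match the coefficient of $\vec{h}^{\ii}$. Since $h = f\circ g$ is of class $C^s$, its multivariate Taylor polynomial of order $|\ii|$ at $\xx$ has $D^{\ii} h(\xx)/\ii!$ as the coefficient of $\vec{h}^{\ii}$. I would extract that same coefficient from the formal identity obtained by substituting the Taylor expansion of $g$ into that of $f$.

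First I would write
\[
g(\xx + \vec{h}) - g(\xx) = \sum_{|\jj| \ge 1} \frac{D^{\jj} g(\xx)}{\jj!}\, \vec{h}^{\jj} + o(|\vec{h}|^{|\ii|}),
\]
and then, using $f\in C^s$, expand
\[
f\bigl(g(\xx + \vec{h})\bigr) - f\bigl(g(\xx)\bigr) = \sum_{\lambda=1}^{|\ii|} \frac{f^{(\lambda)}(g(\xx))}{\lambda!}\,\bigl(g(\xx+\vec{h})-g(\xx)\bigr)^{\lambda} + o(|\vec{h}|^{|\ii|}).
\]
Substituting the first display into the second and applying the multinomial theorem to each $\lambda$-th power produces a sum over ordered tuples $(\jj_1,\dots,\jj_{\lambda})$ with $|\jj_k|\ge 1$, where each tuple contributes $\prod_{k=1}^{\lambda}\frac{D^{\jj_k} g(\xx)}{\jj_k!}\,\vec{h}^{\jj_k}$. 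Picking out the coefficient of $\vec{h}^{\ii}$ restricts attention to tuples with $\sum_k \jj_k = \ii$.

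The remaining step is combinatorial: group ordered tuples by the multiset of their entries. A multiset of size $\lambda$ whose entries sum to $\ii$ is specified uniquely by its distinct values, which I order lexicographically as $\boldsymbol{0}\prec \eell_1\prec \cdots \prec \eell_r$, together with multiplicities $k_j\ge 1$ satisfying $\sum_j k_j = \lambda$ and $\sum_j k_j \eell_j = \ii$; this is exactly the set $p_r(\ii,\lambda)$. The number of ordered tuples realizing such a multiset is the multinomial coefficient $\lambda!/\prod_j k_j!$. Cancelling the $\lambda!$ factor coming from Taylor's theorem against this count, and then multiplying the whole equation by $\ii!$ to recover $D^{\ii}h(\xx)$, yields exactly the formula \eqref{eq:faadibruno}.

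The main obstacle is the combinatorial bookkeeping in the last step: verifying that the count of ordered tuples per multiset is exactly $\lambda!/\prod_j k_j!$ and that the strict lexicographic ordering $\eell_1\prec\cdots\prec\eell_r$ eliminates overcounting of the same decomposition. A purely inductive alternative, based on applying the ordinary product rule to $D^{\ii+\ee_k} h = D_k D^{\ii} h$ and verifying a Pascal-type identity among the coefficients, would bypass the use of Taylor's theorem but still leaves essentially the same combinatorial identity to check, so I would prefer the generating-function style argument above for transparency.
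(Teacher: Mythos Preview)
The paper does not actually prove this lemma; it is quoted verbatim from Constantine and Savits \cite[Theorem 2.1]{constantine.savits:multivariate} with no argument given. So there is no ``paper's own proof'' to compare against.

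Your generating-function/Taylor-expansion approach is correct and is one of the standard routes to the multivariate Fa\`a di Bruno formula. The combinatorial step you flag as the main obstacle is exactly right: an ordered $\lambda$-tuple $(\jj_1,\dots,\jj_\lambda)$ of nonzero multi-indices with $\sum_k \jj_k = \ii$ determines a unique multiset, and conversely a multiset with distinct values $\eell_1\prec\cdots\prec\eell_r$ and multiplicities $k_1,\dots,k_r$ is realized by exactly $\lambda!/\prod_j k_j!$ ordered tuples (this is just the number of words of length $\lambda$ over an alphabet of $r$ letters with prescribed letter counts). The strict lexicographic ordering is precisely the device that labels each multiset once, so there is no overcounting. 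After cancelling $\lambda!$ and multiplying through by $\ii!$ you land on \eqref{eq:faadibruno} as claimed.

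One small point worth making explicit in a full write-up: the Taylor expansions you use are polynomials plus $o(|\vec h|^{|\ii|})$ remainders, and you should check that raising the inner expansion to the $\lambda$-th power and truncating still leaves an $o(|\vec h|^{|\ii|})$ error (it does, since every $\jj_k$ has $|\jj_k|\ge 1$, so cross-terms involving the remainder are automatically of higher order). With that caveat your argument is complete.
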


Also from \cite[Corollary 2.9]{constantine.savits:multivariate}, we
have that, for any $\ii\in \nset^d$,
\[
  \ii! \sum_{r=1}^{\lambda} \sum_{p_r(\ii,\lambda)}  \prod_{j=1}^r \frac{1}{k_j! (\eell_j!)^{k_j}} = S_{|\ii|,\lambda},
\]
where $S_{n,k}$ is the \emph{Stirling number of the second kind},
which counts the number of ways to partition a set of $n$ objects into
$k$ non-empty subsets. Similarly, the \emph{Bell number},
$B_n=\sum_{k=0}^{n} S_{n,k}$, counts the number of partitions of a set
of $n$ objects, whereas the \emph{ordered Bell numbers} are defined by
$\tilde{B}_n = \sum_{k=0}^{n} k! S_{n,k}$ and satisfy the recursive
relation $\tilde{B}_n = \sum_{k=0}^{n-1} \binom{n}{k}
\tilde{B}_k$. Clearly, $B_n\le\tilde{B}_n$. Moreover, the bound
\begin{equation}\label{eq:bell_bound}
B_n\le \tilde{B}_n \le n!/(\log 2)^n
\end{equation}
was given in \cite[Lemma A.3]{back.nobile.eal:optimal}.
We now use these results to show the following result
\begin{lemma}
  Let $\mathscr B \subset \rset^d$ be an open-bounded domain and
  $\kappa\in C^s(\overline {\mathscr{B}})$ (real or complex valued) for $s \geq 0$. Then,
  $a = e^\kappa \in C^s(\overline {\mathscr{B}})$ and we have the estimate
\[
  \|a\|_{C^s(\overline{\mathscr{B}})} \le \frac{s!}{(\log
    2)^s}\|a\|_{C^0(\overline{\mathscr{B}})}(1+\|\kappa\|_{C^s(\overline{\mathscr{B}})})^s.
\]
\end{lemma}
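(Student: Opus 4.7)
The plan is to apply the multivariate Faà di Bruno formula from the lemma above with $f(t)=e^t$ and $g=\kappa$, then exploit the special property that $f^{(\lambda)}=f$ for all $\lambda$ to collapse the sum onto Stirling numbers, and finally invoke the bound \eqref{eq:bell_bound} on the ordered Bell numbers. Concretely, for a multi-index $\ii \in \nset^d$ with $|\ii|\le s$ and any $\xx \in \overline{\mathscr B}$, \eqref{eq:faadibruno} yields
\[
  D^\ii a(\xx) \;=\; \ii!\sum_{\lambda=1}^{|\ii|} a(\xx) \sum_{r=1}^{\lambda}\sum_{p_r(\ii,\lambda)}\prod_{j=1}^r \frac{(D^{\eell_j}\kappa(\xx))^{k_j}}{k_j!(\eell_j!)^{k_j}},
\]
since $f^{(\lambda)}(g(\xx))=e^{\kappa(\xx)}=a(\xx)$ for every $\lambda\ge 1$. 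Hence $a\in C^s(\overline{\mathscr B})$ because the right-hand side is a finite combination of continuous functions.

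Next I would estimate termwise: for each $\eell_j$ with $|\eell_j|\le|\ii|\le s$ one has $|D^{\eell_j}\kappa(\xx)|\le\|\kappa\|_{C^s(\overline{\mathscr B})}$, and using $\sum_{j=1}^r k_j=\lambda$ I obtain $\prod_j|D^{\eell_j}\kappa(\xx)|^{k_j}\le\|\kappa\|_{C^s}^{\lambda}$. Factoring $\|\kappa\|_{C^s}^\lambda$ out of the combinatorial sum and invoking the Stirling-number identity
\[
  \ii!\sum_{r=1}^{\lambda}\sum_{p_r(\ii,\lambda)}\prod_{j=1}^{r}\frac{1}{k_j!(\eell_j!)^{k_j}}=S_{|\ii|,\lambda}
\]
recalled right after the Faà di Bruno formula, one arrives at
\[
  |D^\ii a(\xx)| \;\le\; \|a\|_{C^0(\overline{\mathscr B})}\sum_{\lambda=1}^{|\ii|} S_{|\ii|,\lambda}\,\|\kappa\|_{C^s(\overline{\mathscr B})}^{\lambda}.
\]

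The key step is to turn this polynomial in $\|\kappa\|_{C^s}$ into a clean power of $1+\|\kappa\|_{C^s}$. Since $\|\kappa\|_{C^s}\ge 0$, I use the crude but sufficient bound $t^\lambda\le(1+t)^{|\ii|}$ for every $0\le\lambda\le|\ii|$ with $t=\|\kappa\|_{C^s}$, which gives
\[
  \sum_{\lambda=1}^{|\ii|} S_{|\ii|,\lambda}\,t^\lambda \;\le\; (1+t)^{|\ii|}\sum_{\lambda=1}^{|\ii|} S_{|\ii|,\lambda} \;=\; (1+t)^{|\ii|} B_{|\ii|}.
\]
Applying the Bell-number estimate $B_n\le n!/(\log 2)^n$ from \eqref{eq:bell_bound} and noting that $n\mapsto n!/(\log 2)^n$ and $n\mapsto (1+t)^n$ are both non-decreasing (because $\log 2<1$ and $t\ge 0$), I can replace $|\ii|$ by $s$ and finally take the supremum over $\xx$ and the maximum over $|\ii|\le s$ to conclude
\[
  \|a\|_{C^s(\overline{\mathscr B})} \;\le\; \frac{s!}{(\log 2)^s}\,\|a\|_{C^0(\overline{\mathscr B})}\bigl(1+\|\kappa\|_{C^s(\overline{\mathscr B})}\bigr)^s.
\]
The only mildly delicate part is the passage $t^\lambda\le(1+t)^{|\ii|}$; everything else is a direct transcription of the Faà di Bruno identity combined with the previously recalled combinatorial facts. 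No obstacle of substance is expected.
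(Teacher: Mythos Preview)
Your proof is correct and follows essentially the same route as the paper: apply the Fa\`a di Bruno formula with $f(t)=e^t$, bound each $|D^{\eell_j}\kappa|$ by $\|\kappa\|_{C^s}$, collapse the combinatorial sum to Stirling numbers, use $t^\lambda\le(1+t)^{|\ii|}$ to extract the Bell number $B_{|\ii|}$, and finish with the bound \eqref{eq:bell_bound}. The only difference is that you make the monotonicity step $|\ii|\to s$ explicit, whereas the paper does this silently.
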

\begin{proof}
Using formula \eqref{eq:faadibruno} we have for any $\ii\in\nset^d$, $|\ii|\le s$ and any $\xx\in\mathscr{B}$
\begin{align*}
  |D^\ii e^{\kappa(\xx)}| &= \ii!\sum_{\lambda=1}^{|\ii|} e^{\kappa(\xx)} \sum_{r=1}^{\lambda} \sum_{p_r(\ii,\lambda)}  \prod_{j=1}^r \frac{|D^{\eell_j}\kappa(\xx)|^{k_j}}{k_j! (\eell_j!)^{k_j}} \le \|a\|_{C^0(\overline{\mathscr{B}})} \sum_{\lambda=1}^{|\ii|} \|\kappa\|_{C^s(\overline{\mathscr{B}})}^\lambda S_{|\ii|,\lambda} \\
&\le \|a\|_{C^0(\overline{\mathscr{B}})} (1+\|\kappa\|_{C^s(\overline{\mathscr{B}})})^{|\ii|} B_n.
\end{align*}
The result then follows from the bound on the Bell numbers in \eqref{eq:bell_bound}.
\end{proof}

\subsection{ $L^p(\Gamma)$ summability, pointwise in space}

We now consider a diffusion coefficient as in Assumption~\ref{assump:b_and_p}:
\[
  a(\xx,\yy) = \exp\left\{\sum_{j\geq 1} \psi_j(\xx)y_j\right\} = \prod_{j=1}^\infty e^{y_j\psi_j(\xx)}, \qquad \xx\in\mathscr{B},
\]
with $y_j$, $j \geq 1$, independent random variables, all uniformly
distributed in $[-1,1]$ and recall the definition of the sequence
$\bb_{s}=\{b_{s,j}\}_{j\geq 1}$, for all $s\in\nset$ in
\eqref{eq:bbarj_def}.

\begin{lemma}\label{lemma:C0Lp}
If $\bb_0\in\ell^2$
then $\E{a(\xx)^p}<\infty$ for all $0<p<\infty$ and $\forall\xx\in\mathscr{B}$.
\end{lemma}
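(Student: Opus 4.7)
The plan is to exploit the multiplicative/tensor-product structure of the measure on $\Gamma$ together with the independence of the $y_j$ to reduce $\E{a(\xx)^p}$ to an infinite product of one-dimensional integrals, and then to bound this product using the $\ell^2$ summability of $\bb_0$.

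First I would introduce the partial sums $S_N(\xx,\yy)=\sum_{j=1}^N \psi_j(\xx)y_j$ and the truncated coefficients $a_N(\xx,\yy)=e^{S_N(\xx,\yy)}$. Since the variances $\var{\psi_j(\xx)y_j}=\frac{1}{3}\psi_j(\xx)^2 \le \frac{1}{3}b_{0,j}^2$ are summable under the assumption $\bb_0 \in \ell^2$, Kolmogorov's one-series theorem yields that $S_N(\xx,\yy)$ converges a.s.\ in $\yy$ to $S_\infty(\xx,\yy)=\sum_{j\geq 1}\psi_j(\xx)y_j$, so $a_N(\xx,\yy)\to a(\xx,\yy)$ a.s.\ and thus $a_N(\xx,\yy)^p \to a(\xx,\yy)^p$ a.s.\ as well.

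Next, using the independence of the $y_j$ and the elementary identity $\E{e^{t y_j}}=\sinh(t)/t$ for $y_j\sim U([-1,1])$, I would compute
\[
  \E{a_N(\xx)^p} \;=\; \prod_{j=1}^N \frac{\sinh(p\,\psi_j(\xx))}{p\,\psi_j(\xx)}.
\]
The key analytic bound is the inequality $\sinh(t)/t \le e^{t^2/2}$, which follows from the term-by-term comparison of the two power series (each coefficient of $\sinh(t)/t$ is $1/(2k+1)!$, dominated by $1/(2^k k!)$, the coefficient of $e^{t^2/2}$). Plugging this in and using $|\psi_j(\xx)|\le b_{0,j}$ gives
\[
  \E{a_N(\xx)^p} \;\le\; \exp\!\Bigl(\tfrac{p^2}{2}\sum_{j=1}^N \psi_j(\xx)^2 \Bigr) \;\le\; \exp\!\Bigl(\tfrac{p^2}{2}\|\bb_0\|_{\ell^2}^2 \Bigr).
\]

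Finally I would pass to the limit $N\to\infty$: Fatou's lemma applied to the a.s.\ convergence of $a_N^p$ yields
\[
  \E{a(\xx)^p} \;\le\; \liminf_{N\to\infty}\E{a_N(\xx)^p} \;\le\; \exp\!\Bigl(\tfrac{p^2}{2}\|\bb_0\|_{\ell^2}^2 \Bigr)<\infty,
\]
which is the required bound for every $0<p<\infty$ and uniformly in $\xx\in\mathscr{B}$. The only potentially delicate step is the justification of the a.s.\ convergence of $S_N$ under the cylindrical measure on $\Gamma$; however, since the argument only involves finite-dimensional marginals at each truncation level and the final bound is uniform in $N$, the passage to the limit via Fatou (or equivalently monotone convergence applied to $|a_N|^p$ bounded by the product of $\cosh$ factors) is unproblematic.
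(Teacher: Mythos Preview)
Your argument is correct and follows essentially the same route as the paper's own proof: both exploit independence of the $y_j$ to factor $\E{a(\xx)^p}$ into the product $\prod_j \sinh(p\psi_j(\xx))/(p\psi_j(\xx))$ and then control the product via the square-summability of $\psi_j(\xx)$. Your version is in fact slightly cleaner: you justify the passage to the infinite product explicitly via truncation and Fatou, and your use of the inequality $\sinh(t)/t\le e^{t^2/2}$ yields the uniform bound $\E{a(\xx)^p}\le\exp\bigl(\tfrac{p^2}{2}\|\bb_0\|_{\ell^2}^2\bigr)$, whereas the paper only argues finiteness pointwise in $\xx$ from the asymptotic $\log(\sinh z/z)\sim z^2/6$. (Your parenthetical remark about monotone convergence is not quite right---$a_N^p$ is not monotone in $N$---but the Fatou argument you give is sufficient and the parenthetical is unnecessary.)
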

\begin{proof}
  For any $\xx\in\mathscr{B}$, we estimate the $p$-th moment of
  $a(\xx,\yy)$, exploiting independence of the random variables,
  $y_j$:
\begin{align*}
\E{a(\xx)^p} = \E{\prod_{j=1}^\infty e^{py_j\psi_j(\xx)}} = \prod_{j=1}^\infty \E{e^{py_j\psi_j(\xx)}} = \prod_{j=1}^\infty\frac{\sinh(p\psi_j(\xx))}{p\psi_j(\xx)} = \exp\left\{\sum_{j=1}^\infty\log\left(\frac{\sinh(p\psi_j(\xx))}{p\psi_j(\xx)}\right)\right\}
\end{align*}
where in the last two equalities we have implicitly assumed that $\sinh(z)/z=1$ for $z=0$. Setting $\theta_0(p;\xx) = \prod_{j=1}^\infty\frac{\sinh(p\psi_j(\xx))}{p\psi_j(\xx)}$ and observing that $\log(\sinh(z)/z) \sim z^2/6$, we have
\[
  \E{a(\xx)^p} = \theta_0(p;\xx) <\infty \quad \forall \xx\in\mathscr{B}, \;\; 0<p<\infty \qquad \Longleftrightarrow  \qquad \sum_{j=1}^\infty \psi_j(\xx)^2 <\infty.
\]
Since $\sum_{j=1}^\infty b_{0,j}^2 <\infty$ implies $\sum_{j=1}^\infty \psi_j(\xx)^2 <\infty$ for any $\xx\in\mathscr{B}$, this concludes the proof.
\end{proof}

A similar result holds for higher-order derivatives of $a$.
\begin{lemma}\label{lemma:CsLp}
  Let $s\in\nset_+$. If $\bb_s\in\ell^2$,
then for any $\ii\in\nset^d$, $|\ii|=s$, $\E{(D^\ii a(\xx))^{2p}}<\infty$ for all $0<p<\infty$ and $\forall\xx\in\mathscr{B}$.
\end{lemma}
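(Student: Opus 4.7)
The plan is to combine the multivariate Fa\`a di Bruno formula proved earlier in the Appendix with H\"older's inequality to separate a moment of $a$ from moments of polynomials in the derivatives of $\kappa$, and then to use the previous lemma to control the former and a sub-Gaussian argument to control the latter.

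More precisely, applying formula \eqref{eq:faadibruno} pointwise in $\xx$ and $\yy$, we can write
\[
D^\ii a(\xx,\yy) = a(\xx,\yy)\, P_\ii(\xx,\yy), \qquad
P_\ii(\xx,\yy) = \ii!\sum_{\lambda=1}^{|\ii|} \sum_{r=1}^{\lambda} \sum_{p_r(\ii,\lambda)} \prod_{j=1}^r \frac{(D^{\eell_j}\kappa(\xx,\yy))^{k_j}}{k_j!(\eell_j!)^{k_j}},
\]
with $|\eell_j|\le |\ii|=s$ for every term appearing in the sum. Thus $P_\ii(\xx,\cdot)$ is a (fixed, finite) linear combination of products of at most $s$ factors, each of the form $D^{\eell_j}\kappa(\xx,\yy)=\sum_{k\ge 1} D^{\eell_j}\psi_k(\xx)\, y_k$. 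By H\"older's inequality with conjugate exponents $q_1,q_2>1$, $q_1^{-1}+q_2^{-1}=1$,
\[
\E{(D^\ii a(\xx))^{2p}} \le \E{a(\xx)^{2pq_1}}^{1/q_1}\,\E{|P_\ii(\xx,\cdot)|^{2pq_2}}^{1/q_2},
\]
so it suffices to show that both factors on the right-hand side are finite for every fixed $\xx\in\mathscr{B}$.

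For the first factor, the hypothesis $\bb_s \in \ell^2$ together with the monotonicity $0\le b_{0,k}\le b_{s,k}$ (observed in the paper right after Assumption~\ref{assump:b_and_p}) gives $\bb_0\in\ell^2$, hence $\E{a(\xx)^{2pq_1}}<\infty$ by Lemma~\ref{lemma:C0Lp}.

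For the second factor, I would use the Minkowski inequality to reduce to bounding individual moments $\E{(D^{\eell_j}\kappa(\xx,\yy))^{2pq_2 s}}$ (the worst case being $s$ factors multiplied, all raised to the $2pq_2$-th power). Since each $y_k$ is uniform on $[-1,1]$, a direct computation of the moment generating function
\[
\E{\exp\bigl(tD^{\eell_j}\kappa(\xx,\cdot)\bigr)} = \prod_{k\ge 1} \frac{\sinh\bigl(t D^{\eell_j}\psi_k(\xx)\bigr)}{t D^{\eell_j}\psi_k(\xx)} \le \exp\Bigl(\tfrac{t^2}{6}\sum_{k\ge 1} (D^{\eell_j}\psi_k(\xx))^2\Bigr),
\]
valid by the elementary inequality $\sinh(z)/z\le e^{z^2/6}$, shows that $D^{\eell_j}\kappa(\xx,\cdot)$ is sub-Gaussian with variance proxy $\tfrac{1}{3}\sum_{k\ge 1} (D^{\eell_j}\psi_k(\xx))^2\le \tfrac{1}{3}\|\bb_s\|_{\ell^2}^2<\infty$ (using $|\eell_j|\le s$). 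Sub-Gaussian variables have finite moments of all orders, so every $\E{(D^{\eell_j}\kappa(\xx,\cdot))^m}$ is finite, and consequently $\E{|P_\ii(\xx,\cdot)|^{2pq_2}}<\infty$.

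The main technical point is the control of the polynomial part $P_\ii$: since $\kappa$ involves a countable sum of random variables, one cannot simply bound things termwise and must exploit square-summability through a sub-Gaussian (or equivalently MGF) argument. Once this ingredient is in place, the rest is a routine application of H\"older and the preceding lemma.
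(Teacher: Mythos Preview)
Your argument is correct, and it takes a genuinely different route from the paper's. The paper does \emph{not} separate $a$ from the polynomial part via H\"older. Instead (for $s=1$), it writes $(\partial_{x_i}a)^{2p}=a^{2p}\bigl(\sum_j \partial_{x_i}\psi_j\, y_j\bigr)^{2p}$, expands the power by the multinomial theorem, and then uses independence of the $y_j$ to evaluate each term $\E{\prod_j y_j^{q_j}e^{2py_j\psi_j(\xx)}}$ directly, bounding the univariate factors by distinguishing even and odd $q_j$ and reassembling to recover a finite expression involving $\|\bb_1\|_{\ell^2}^2$. In other words, the paper keeps $a$ and $P_\ii$ coupled throughout and exploits the product structure over $j$.

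Your decoupling via H\"older plus the sub-Gaussian MGF bound is cleaner and, importantly, extends to arbitrary $s$ with no extra bookkeeping: once $D^{\eell}\kappa(\xx,\cdot)$ is sub-Gaussian with proxy $\tfrac{1}{3}\|\bb_s\|_{\ell^2}^2$, finiteness of all moments of $P_\ii$ follows immediately from Minkowski (for the finite sum in the Fa\`a di Bruno formula) and generalized H\"older (for the products, with at most $s$ factors). The paper's approach gives more explicit constants but is computationally heavier and is only carried out for $s=1$, with the general case left to the reader; your approach trades constant tracking for uniformity in $s$.
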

\begin{proof}
Since the calculations are tedious, we prove the result here for $s=1$
only. Using the chain rule, we have
\begin{align*}
  (\partial_{x_i}a(\xx,\yy))^{2p} &= \left(\sum_{j \geq 1} a(\xx,\yy)\partial_{x_i}\psi_j(\xx)y_j\right)^{2p} = a(\xx,\yy)^{2p}\sum_{\qq\in\nset^{\nset} \atop |\qq|=2p} (2p)! \prod_{j=1}^\infty \frac{1}{q_j!}(\partial_{x_i}\psi_j(\xx)y_j)^{q_j}\\
                                  &=\sum_{\qq\in\nset^{\nset} \atop |\qq|=2p} (2p)! \prod_{j=1}^\infty \frac{1}{q_j!}(\partial_{x_i}\psi_j(\xx)y_j)^{q_j}e^{2py_j\psi_j(\xx)}.
\end{align*}
Hence, %
\[
  \E{(\partial_{x_i}a(\xx,\yy))^{2p}} = \sum_{\qq\in\nset^{\nset} \atop |\qq|=2p} (2p)! \prod_{j=1}^\infty (\partial_{x_i}\psi_j(\xx))^{q_j}\E{\frac{1}{q_j!}y_j^{q_j}e^{2py_j\psi_j(\xx)}}.
\]
We now distinguish between even or odd $q_j$. For even $q_j$, we have
\[
  \E{\frac{1}{q_j!}y_j^{q_j}e^{2py_j\psi_j(\xx)}} \le \E{\frac{1}{q_j!}e^{2py_j\psi_j(\xx)}} = \frac{1}{q_j!} \frac{\sinh(2p\psi_j(\xx))}{2p\psi_j(\xx)}
\]
while for $q_j$ odd we have
\begin{align*}
  \E{\frac{1}{q_j!}y_j^{q_j}e^{2py_j\psi_j(\xx)}} &= \frac{1}{q_j!}\int_{-1}^1 \frac{1}{2}y^{q_j}e^{2py\psi_j(\xx)}dy = \frac{1}{q_j!}\int_{0}^1 y^{q_j}\sinh(2py\psi_j(\xx))dy \\
&=\frac{1}{q_j!}\sum_{n=0}^\infty \frac{(2p\psi_j(\xx))^{2n+1}}{(2n+1)!}\int_0^1y^{2n+1+q_j}dy = \frac{1}{q_j!}\sum_{n=0}^\infty \frac{(2p\psi_j(\xx))^{2n+1}}{(2n+1)!(2n+2+q_j)} \\
&\le \frac{1}{(q_j+1)!}\sinh(2p|\psi_j(\xx)|) \le \frac{2pb_{1,j}}{(q_j+1)!} \frac{\sinh(2p\psi_j(\xx))}{2p\psi_j(\xx)}.
\end{align*}
Hence, defining the function
\[
  f(q_j) = \begin{cases} \frac{1}{q_j!} & \text{for $q_j$
      even},\\[1mm] \frac{2pb_{1,j}}{(q_j+1)!} & \text{for $q_j$
      odd}, \end{cases}
\]
we have
\begin{align*}
   \E{(\partial_{x_i}a(\xx,\yy))^{2p}} &\le \sum_{\qq\in\nset^{\nset} \atop |\qq|=2p} (2p)! \prod_{j=1}^\infty b_{1,j}^{q_j} f(q_j) \frac{\sinh(2p\psi_j(\xx))}{2p\psi_j(\xx)}
= \theta_0(2p;\xx)\sum_{\qq\in\nset^{\nset} \atop |\qq|=2p} (2p)! \prod_{j=1}^\infty b_{1,j}^{q_j} f(q_j)\\
&\le \theta_0(2p;\xx)\sum_{\qq\in\nset^{\nset} \atop |\qq|=2p, \text{$\qq$ even}} (2p)! (1+2p)^{|\qq|_0} \prod_{j=1}^\infty \frac{b_{1,j}^{q_j}}{q_j!} \\
&\le (1+2p)^{p} \theta_0(2p;\xx)\sum_{\qq\in\nset^{\nset} \atop |\qq|=p} (2p)!  \prod_{j=1}^\infty \frac{b_{1,j}^{2q_j}}{(2q_j)!} \\
&\le (1+2p)^{p} (2p)^p \theta_0(2p;\xx)\sum_{\qq\in\nset^{\nset} \atop
  |\qq|=p} p!  \prod_{j=1}^\infty \frac{(b_{1,j}^2)^{q_j}}{q_j!} =
  (1+2p)^{p} (2p)^p \theta_0(2p;\xx) \left(\sum_{j\geq 1} b_{1,j}^2\right)
\end{align*}
from which we see that $\E{(\partial_{x_i}a(\xx,\yy))^{2p}}$ is bounded for any $0\le p<\infty$ and any $\xx\in\mathscr{B}$ if $\bb_1\in \ell^2$.
\end{proof}

\subsection{ $L^p(\Gamma)$ summability, uniform in space}

Assuming now that $\bb_s\in\ell^2$ so that the random field, $a$, is
$s$-times differentiable in an $L^p(\Gamma)$ sense according to
Lemma~\ref{lemma:CsLp}, we show that this implies some uniform
$L^p(\Gamma)$ summability as detailed in the next lemma.
\begin{lemma} \label{lem:Wa_inf_bound}
Let $s\in\nset_+$. If $\bb_s\in\ell^2$ then
$\E{\|a\|^p_{W^{\upsilon,\infty}(\mathscr{B})}}<\infty$
for all $1\le p<\infty$ and $\upsilon<s$.
\end{lemma}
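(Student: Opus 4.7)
The plan is to bootstrap the pointwise-in-$\xx$ moment estimate of Lemma~\ref{lemma:CsLp} to a uniform-in-$\xx$ estimate, lift this to an $L^{q}$-in-space moment bound by Fubini--Tonelli, and then pass from $W^{s,q}(\mathscr{B})$ to $W^{\upsilon,\infty}(\mathscr{B})$ via Sobolev embedding, trading one unit of differentiability for integrability of arbitrarily high order. The condition $\upsilon<s$ is exactly what allows room to perform this trade.

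First I would verify that the bound produced by the proof of Lemma~\ref{lemma:CsLp} can be made uniform in $\xx\in\overline{\mathscr{B}}$ and extends to every $|\ii|\le s$. The $s=1$ argument there yields $\E{|D^{\ii}a(\xx)|^{2p}}\le C(p,s)\,\theta_{0}(2p;\xx)$, and the same Fa\`a di Bruno expansion combined with the Stirling/ordered-Bell machinery already recalled in Appendix~\ref{sec:app-summability} handles every $|\ii|\le s$ (using the monotonicity $b_{|\ii|,j}\le b_{s,j}$ to absorb the lower-order derivative factors). To strip the $\xx$-dependence from $\theta_{0}$ I would invoke the elementary inequality $\sinh(z)/z\le e^{z^{2}/2}$ (which follows from the coefficient-wise estimate $(2n+1)!\ge 2^{n}n!$), applied termwise in the product defining $\theta_{0}$:
\[
\theta_{0}(p;\xx)=\prod_{j\ge 1}\frac{\sinh(p\psi_{j}(\xx))}{p\psi_{j}(\xx)}
\le\exp\!\Bigl(\tfrac{p^{2}}{2}\sum_{j\ge 1}\psi_{j}(\xx)^{2}\Bigr)
\le\exp\!\Bigl(\tfrac{p^{2}}{2}\norm{\bb_{0}}{\ell^{2}}^{2}\Bigr),
\]
where $\bb_{0}\in\ell^{2}$ is automatic from $\bb_{s}\in\ell^{2}$ since $b_{0,j}=\norm{\psi_{j}}{L^{\infty}(\mathscr{B})}\le b_{s,j}$. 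This delivers the uniform bound
\[
M(q,|\ii|):=\sup_{\xx\in\overline{\mathscr{B}}}\E{|D^{\ii}a(\xx)|^{q}}<\infty
\qquad\text{for every }|\ii|\le s,\;1\le q<\infty.
\]

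With this in hand, Fubini--Tonelli gives $\E{\norm{D^{\ii}a}{L^{q}(\mathscr{B})}^{q}}\le M(q,|\ii|)|\mathscr{B}|$, so that $a\in L^{q}(\Gamma;W^{s,q}(\mathscr{B}))$ for every $1\le q<\infty$. I would then fix $1\le p<\infty$ and $\upsilon<s$ and pick $q$ with $q\ge p$ and $q>d/(s-\upsilon)$; such a $q$ exists precisely because $\upsilon<s$. The (Morrey-type) Sobolev embedding $W^{s,q}(\mathscr{B})\hookrightarrow W^{\upsilon,\infty}(\mathscr{B})$ combined with Jensen's inequality (admissible since $p/q\le 1$) then yields
\[
\E{\norm{a}{W^{\upsilon,\infty}(\mathscr{B})}^{p}}
\le C\,\E{\norm{a}{W^{s,q}(\mathscr{B})}^{p}}
\le C\bigl(\E{\norm{a}{W^{s,q}(\mathscr{B})}^{q}}\bigr)^{p/q}
\le C\Bigl(\sum_{|\ii|\le s}M(q,|\ii|)|\mathscr{B}|\Bigr)^{p/q}<\infty,
\]
which is the claimed bound.

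The main obstacle is Step~1: turning the pointwise estimate of Lemma~\ref{lemma:CsLp} into a bound that is uniform in $\xx$, because the constant a priori depends on $\theta_{0}(2p;\xx)$. The remedy is the clean inequality $\sinh(z)/z\le e^{z^{2}/2}$ together with the monotonicity $b_{0,j}\le b_{s,j}$, which collapse the $\xx$-dependence into the single summability constant $\norm{\bb_{0}}{\ell^{2}}$ and only require the (weaker) hypothesis $\bb_{0}\in\ell^{2}$. Once this is secured, the remaining steps are standard: Fubini promotes pointwise moment control to $L^{q}$-in-space control, and Sobolev embedding converts spatial integrability into spatial regularity, with the restriction $\upsilon<s$ entering only through the choice $q>d/(s-\upsilon)$.
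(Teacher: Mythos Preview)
Your proposal is correct and follows essentially the same route as the paper: invoke Lemma~\ref{lemma:CsLp} for pointwise moment control, integrate over $\mathscr{B}$ via Fubini to obtain $L^{q}(\Gamma;W^{s,q}(\mathscr{B}))$ bounds, and then use the Sobolev embedding $W^{s,q}(\mathscr{B})\hookrightarrow W^{\upsilon,\infty}(\mathscr{B})$ for $q$ large enough. Your treatment is actually more careful than the paper's in one respect: you make explicit (via $\sinh(z)/z\le e^{z^{2}/2}$) that the constant $\theta_{0}(2p;\xx)$ arising in Lemma~\ref{lemma:CsLp} is bounded uniformly in $\xx$, which the paper leaves implicit but is needed to conclude that $\int_{\mathscr{B}}\E{(D^{\ii}a(\xx))^{2q}}\di{\xx}<\infty$.
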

\begin{proof}
  We exploit the Sobolev embedding,
  $W^{\upsilon+\frac{d}{2q},2q}(\mathscr{B}) \subseteq
  W^{\upsilon,\infty}(\mathscr{B})$, for all $\upsilon\ge 0$ and
  $q\ge 1$. %
  For $q\ge \max\{d/2(s-\upsilon),p/2\}$, we have
  \begin{align*}
    \E{\|a\|^{p}_{W^{\upsilon,\infty}(\mathscr{B})}} &\le \E{\|a\|^{2q}_{W^{s-\frac{d}{2q},\infty}(\mathscr{B})}}  \lesssim \E{\|a\|^{2q}_{W^{s,2q}(\mathscr{B})}} = \E{\sum_{|\ii|\le s} \int_{\mathscr{B}} (D^\ii a(\xx))^{2q}d\xx} \\
&= \sum_{|\ii|\le s} \int_{\mathscr{B}} \E{(D^\ii a(\xx))^{2q}}d\xx < \infty,
\end{align*}
where the last term is bounded from Lemma~\ref{lemma:CsLp}.
\end{proof}
Now, we directly observe by taking $\upsilon =0$ in the previous result that $a_{\max} = \|a\|_{L^\infty(\mathscr{B})}$ has bounded moments,
\[\E{a_{\max}^p}
<\infty,
\]
for all $1\le p<\infty$ and $0<s$. Finally, by observing that, due to
the construction \eqref{eq:kappa_and_a} in
Assumption~\ref{assump:b_and_p}, we have that
$a_{\min} = \frac{1}{\|a^{-1}\|_{L^\infty(\mathscr{B})}}$ has the same
distribution as $a_{\max}$. As a consequence, $a_{\min}$ has bounded
moments as well. This implies in turn that \eqref{eq:bounding_a} holds
and thus problem \eqref{eq:PDE1} is well posed in the Bochner space,
$L^p\left(\Gamma;H^1_0(\mathscr{B})\right)$. That is,

\begin{corollary}[Well posedness with log uniform
  coefficient]\label{cor:well_being}
We have for $0<\nu$ that the problem in Example~\ref{ex:logunif} is well posed in the Bochner space $L^p\left(\Gamma;H^1_0(\mathscr{B})\right)$.
The corresponding solution, $u$, satisfies
$$
\|u\|_{L^p(\Gamma;H^1_0(\mathscr{B}))} \le C \E{\frac{1}{a_{\min}^p}}^{1/p} \|f\|_{H^{-1}(\mathscr{B})}.
$$
\end{corollary}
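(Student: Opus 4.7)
The plan is to chain the moment estimates established in the appendix with a pointwise Lax-Milgram argument rather than to derive anything new. First, I would verify that, for $\nu$ large enough relative to $\exmpld$, the diffusion coefficient of Example~\ref{ex:logunif} satisfies the hypothesis $\bb_s\in\ell^2$ of Lemma~\ref{lem:Wa_inf_bound} for some integer $s\geq 1$. Indeed, the summability estimate \eqref{eq:sum1_p} with $p_s=2$ reduces to $\nu>s$, so $s=1$ is admissible as soon as $\nu>1$; more generally, one picks the smallest positive integer $s$ satisfying this. Invoking Lemma~\ref{lem:Wa_inf_bound} with $\upsilon=0$ then yields $\E{a_{\max}^p}=\E{\|a\|_{L^\infty(\mathscr{B})}^p}<\infty$ for every $1\leq p<\infty$.

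The next step is to transfer this moment bound from $a_{\max}$ to $1/a_{\min}$ via the symmetry argument sketched just above the corollary. Concretely, $1/a(\xx,\yy)=\exp\{-\kappa(\xx,\yy)\}=\exp\{\kappa(\xx,-\yy)\}$ and, since the cylindrical measure $\mu$ is invariant under $\yy\mapsto-\yy$ (each $y_j$ being uniform on $[-1,1]$), the random field $1/a(\cdot,\yy)$ has the same law as $a(\cdot,\yy)$. Taking $L^\infty(\mathscr{B})$ norms gives $1/a_{\min}\stackrel{d}{=}a_{\max}$, hence $\E{1/a_{\min}^p}<\infty$ for every $1\leq p<\infty$, and condition \eqref{eq:bounding_a} holds almost surely.

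Finally, Lax-Milgram applied pointwise in $\yy\in\Gamma$ delivers the deterministic estimate $\|u(\cdot,\yy)\|_{H^1_0(\mathscr{B})}\leq\|f\|_{H^{-1}(\mathscr{B})}/a_{\min}(\yy)$. Raising to the $p$-th power, integrating against $\mu$ over $\Gamma$ and taking the $p$-th root produces
\[
\|u\|_{L^p(\Gamma;H^1_0(\mathscr{B}))}\;\leq\;\E{\frac{1}{a_{\min}^p}}^{1/p}\,\|f\|_{H^{-1}(\mathscr{B})},
\]
and well-posedness in the Bochner space follows at once from the finiteness of the right-hand side. The only mildly subtle point is the equidistribution $1/a_{\min}\stackrel{d}{=}a_{\max}$: it exploits both the symmetry of $\mu$ about the origin and the exponential parametrisation of $a$, and would have to be revisited for a non-symmetric law of the $y_j$ or a non-exponential ansatz. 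Everything else is a routine assembly of the moment estimates already proved in Lemmas~\ref{lemma:C0Lp}--\ref{lem:Wa_inf_bound}.
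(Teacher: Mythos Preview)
Your argument is precisely the one the paper intends: the corollary has no separate proof and is meant to be read off from the paragraph preceding it, which is exactly the chain Lemma~\ref{lem:Wa_inf_bound} $\Rightarrow$ moments of $a_{\max}$ $\Rightarrow$ moments of $1/a_{\min}$ by the reflection symmetry $\yy\mapsto-\yy$ $\Rightarrow$ pointwise Lax--Milgram. Your write-up makes the steps more explicit than the paper does, and the symmetry justification you give is the correct one.

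One point worth flagging: as you yourself note, the route through Lemma~\ref{lem:Wa_inf_bound} needs $\bb_s\in\ell^2$ for some \emph{integer} $s\ge 1$, and by \eqref{eq:sum1_p} with $p_s=2$ this forces $\nu>1$. The corollary, however, is stated for all $\nu>0$. This gap is present in the paper's own reasoning too (the text right before the corollary appeals to Lemma~\ref{lem:Wa_inf_bound} with $\upsilon=0$, hence $s\ge 1$), so it is not a defect of your proof relative to the paper; but you should be aware that neither argument, as written, reaches the full range $0<\nu\le 1$. Closing that gap would require either a fractional-$s$ version of Lemma~\ref{lem:Wa_inf_bound} or a different device (e.g.\ a Fernique-type tail bound on $\|\kappa\|_{L^\infty}$), which the paper does not supply.
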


We observe that higher regularity of the solution, $u$, can be
obtained by using larger values of $s$ in
Lemma~\ref{lem:Wa_inf_bound}.  This in turn yields control on moments
of $W^{\upsilon,\infty}(\mathscr{B})$ norms of the coefficient, $a$,
and following, for instance, estimates similar to (2.10) in
\cite[Theorem 2.4]{graham.scheichl.mixed_MLMC}, we can estimate
moments of the $H^{1+s}(\mathscr{B})$ norm of the solution, $u$. These
regularity estimates, once combined with pathwise error estimates for
the combination technique, can be further used to show the
corresponding $\nu$-dependent convergence rates of MIMC
\cite{abdullatif.etal:MultiIndexMC} for Example~\ref{ex:logunif},
similar to what was presented in Section~\ref{s:example_analysis} for
MISC in the current work.

\section{Shift theorem for problem \eqref{eq:PDE1}}\label{sec:app-regularity}
Here, we seek to establish a \emph{shift theorem} for the problem
\begin{equation}\label{eq:PDE2}
  \begin{cases}
    - \div (a(\xx)\,\nabla u(\xx)) = \forcing(\xx) & \mbox{in}\quad \mathscr{B}=[0,1]^D  \\
    u(\xx) \,=\, 0 & \mbox{on}\quad \partial \mathscr{B},
  \end{cases}
\end{equation}
under suitable assumptions on $a$ and $\forcing$.

With respect to problem \eqref{eq:PDE1}, for convenience, we drop the
dependence on the parameter vector, $\yy$.
We consider an odd periodic extension of $\forcing$, on $[-1,1]^D$, and
an even periodic extension of the coefficient $a$ on $[-1,1]^D$,
named, respectively, $\tilde{\forcing}$ and $\tilde{a}$. More precisely, for $\jj=\{0,1\}^D$, we denote by $\xx_\jj = ((-1)^{j_1}x_1,\ldots,(-1)^{j_D}x_D)$ and
\[
  \tilde{\forcing}(\xx_\jj+2\kk) = (-1)^{|\jj|}\forcing(\xx), \qquad  \tilde{a}(\xx_\jj+2\kk)=a(\xx), \qquad \forall \xx\in [0,1]^2, \;\; \jj\in\{0,1\}^D, \;\; \kk\in\nset^D.
\]
The following Shift theorem holds for problem \eqref{eq:PDE2}.
\begin{lemma}
If the coefficient $a$ is such that its periodic extension satisfies
$\tilde{a}\in W^{s,\infty}(\rset^D)$, $s\ge 0$ and $\forcing \in C^\infty_0(\mathscr{B})$ then $u\in H^{s+1}(\mathscr{B})$.
\end{lemma}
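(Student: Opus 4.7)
The natural strategy is to exploit the symmetry of the data to lift problem~\eqref{eq:PDE2} to a periodic problem on the torus $\mathbb{T}^D = \mathbb{R}^D/(2\mathbb{Z})^D$, where classical elliptic shift theorems apply without boundary obstructions. Concretely, I will introduce the odd periodic extension $\tilde u$ of $u$, mirroring the construction of $\tilde\forcing$, namely $\tilde u(\xx_\jj + 2\kk) = (-1)^{|\jj|} u(\xx)$. Because $u \in H^1_0(\mathscr{B})$, the vanishing trace on $\partial\mathscr{B}$ guarantees that $\tilde u \in H^1(\mathbb{T}^D)$, and a short verification (testing \eqref{eq:PDE2} against smooth periodic functions and accounting for sign changes across each hyperplane $x_i\in\mathbb{Z}$, where $\tilde a$ is continuous by the even extension and $\tilde u$ reverses sign by the odd extension) shows that $\tilde u$ is a weak solution of
\[
-\div(\tilde a\, \nabla \tilde u) = \tilde \forcing \qquad \text{on } \mathbb{T}^D.
\]

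Next I would check that $\tilde\forcing \in C^\infty(\mathbb{T}^D)$: since $\forcing \in C^\infty_0(\mathscr{B})$ vanishes together with all its derivatives in a neighbourhood of $\partial\mathscr{B}$, the odd reflection introduces no jumps in any derivative, so $\tilde\forcing$ is smooth on the torus; in particular $\tilde\forcing \in H^{s-1}(\mathbb{T}^D)$ for every $s\ge 0$. By hypothesis $\tilde a \in W^{s,\infty}(\mathbb{T}^D)$, and it inherits the ellipticity bounds $0<a_{\min}\le \tilde a\le a_{\max}$ from $a$, so the problem is coercive and uniquely solvable in $H^1(\mathbb{T}^D)/\mathbb{R}$ (uniqueness being enforced in our case by the odd symmetry).

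The core of the proof is then the classical shift theorem on the torus: if $v\in H^1(\mathbb{T}^D)$ solves $-\div(\tilde a\,\nabla v) = g$ with $\tilde a \in W^{s,\infty}(\mathbb{T}^D)$ and $g \in H^{s-1}(\mathbb{T}^D)$, then $v \in H^{s+1}(\mathbb{T}^D)$. For integer $s$ I would argue by induction: differentiating the equation, $\partial_i v$ satisfies
\[
-\div(\tilde a\,\nabla \partial_i v) = \partial_i g + \div(\partial_i \tilde a\, \nabla v),
\]
and standard Sobolev multiplier estimates (combined with the inductive hypothesis $v \in H^s$) place the right-hand side in $H^{s-2}$, so the base case yields $\partial_i v \in H^{s}$. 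Because $\mathbb{T}^D$ has no boundary, the differentiation is compatible with the test space and no boundary integrals are produced, which is exactly the simplification offered by the torus reformulation. Non-integer $s$ is recovered by real interpolation between consecutive integer cases. Applied with $v=\tilde u$ this yields $\tilde u \in H^{s+1}(\mathbb{T}^D)$, and restriction to $\mathscr{B}$ gives the conclusion $u\in H^{s+1}(\mathscr{B})$.

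The main obstacle is the Sobolev multiplier step at low (integer) $s$: one must control $\partial_i \tilde a\cdot \nabla v$ in $H^{s-1}(\mathbb{T}^D)$ knowing only $\partial_i \tilde a \in W^{s-1,\infty}$ and $\nabla v \in H^{s-1}$. For $s=1$ this reduces to $\|fg\|_{L^2}\le \|f\|_{L^\infty}\|g\|_{L^2}$; for integer $s\ge 2$ it follows by a Leibniz expansion and Sobolev embeddings; and the fractional case is reached by interpolation. A secondary technical point, easier but deserving a sentence in the write-up, is to justify that $\tilde u$ is a weak solution across the reflection interfaces rather than only on each reflected copy of $\mathscr{B}$; this amounts to checking that the normal flux $\tilde a\,\nabla\tilde u\cdot \nu$ has no jump across $\{x_i \in \mathbb{Z}\}$, which follows from the matching parities of $\tilde a$ and $\partial_i \tilde u$ together with $u|_{\partial\mathscr{B}}=0$.
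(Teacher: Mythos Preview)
Your approach is essentially the same as the paper's: reduce \eqref{eq:PDE2} to a periodic problem on $[-1,1]^D$ via odd/even reflections of $\forcing$ and $a$, invoke elliptic regularity on the torus (where there is no boundary obstruction), and restrict back to $\mathscr{B}$. The paper merely cites ``standard elliptic regularity arguments'' for the torus shift theorem, whereas you sketch the inductive proof and the interpolation step; your extra care about $\tilde\forcing\in C^\infty(\mathbb{T}^D)$ and about the multiplier estimate is appropriate and not in conflict with the paper.

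The one genuine difference is the direction of the identification. You start from $u$, build its odd extension $\tilde u$, and then must verify that $\tilde u$ is a weak solution of the periodic problem across the reflection hyperplanes (your ``secondary technical point'' about the normal flux). The paper instead \emph{defines} the periodic problem first, obtains its unique mean-zero solution $\tilde u\in H^1_{per}$, and then argues by symmetry that $\tilde u$ is odd, hence vanishes on $\partial\mathscr{B}$ and therefore restricts to the unique solution $u$ of \eqref{eq:PDE2}. This order of argument buys a small simplification: uniqueness plus the odd symmetry of the data replaces the explicit interface-flux verification. Either route is correct; the paper's is marginally cleaner, while yours is more self-contained because you actually outline the shift theorem rather than citing it.
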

\begin{proof}
We define the extended problem
\[
  \begin{cases}
    - \div (\tilde{a}(\xx)\,\nabla \tilde{u}(\xx)) = \tilde{\forcing}(\xx) & \mbox{in}\quad \tilde{\mathscr{B}}=[-1,1]^D  \\
    \int_{\tilde{\mathscr{B}}} u(\xx) \,=\, 0 \\
  \text{periodic boundary conditions on $\partial\tilde{\mathscr{B}}$}.
  \end{cases}
\]
Since by assumption $\tilde{a}\in L^\infty(\rset^D)$ and
$\tilde{\forcing}\in L^2(\tilde{\mathscr{B}})$, this problem has a
unique solution,
$\tilde{u}\in H^1_{per}(\tilde{\mathscr{B}})\setminus\rset$, where we
denote with $H^s_{per}(\tilde{\mathscr{B}})$ the space of periodic
functions with (periodic) square integrable derivatives up to order
$s$. It is easy to check that the solution $\tilde{u}$ is odd, that is
$\tilde{u}(\xx_\jj)=(-1)^{\jj}\tilde{u}(\xx)$,
$\forall \xx\in[0,1]^D$, hence $\tilde{u}=0$ (in the sense of traces)
on $\partial\mathscr{B}$ and it coincides with the (unique) solution
of \eqref{eq:PDE2} on $\mathscr{B}$.  Moreover, standard elliptic
regularity arguments%
allow us to say that $\tilde{u}\in H^s_{per}(\tilde{\mathscr{B}})$, hence
$u\in H^s(\mathscr{B})$.
\end{proof}

\bibliographystyle{src/focm}

\end{document}